\DeclareFontFamily{OT1}{rsfs}{}
\DeclareFontShape{OT1}{rsfs}{m}{n}{ <-7> rsfs5 <7-10> rsfs7 <10-> rsfs10}{}
\DeclareMathAlphabet{\mathscr}{OT1}{rsfs}{m}{n}
\def\mysavedown#1{\edef\mysubs{\mysubs#1}}
\def\mysaveup#1{\edef\mysups{\mysups#1}}
\def\mydown#1{{\mytensor}_{\vphantom{\mysubs}#1}}
\def\myup#1{{\mytensor}^{\vphantom{\mysups}#1}}
\def\tensor#1#2{
  #1
  \def\mytensor{\vphantom{#1}}
  \def\mysubs{\relax}
  \def\mysups{\relax}
  \let\down=\mysavedown
  \let\up=\mysaveup
  #2
  \let\down=\mydown
  \let\up=\myup
  #2
  }
\newcommand{\Id}{\operatorname{Id}}
\DeclareMathOperator{\BV}{BV}
\DeclareMathOperator{\NBV}{NBV}
\newcommand{\R}{\mathbb R}
\newcommand{\B}{\mathbb B}
\renewcommand{\S}{\mathbb S}
\renewcommand{\setminus}{\smallsetminus}
\renewcommand{\to}{\rightarrow}
\renewcommand{\centerdot}{\mathbin{\text{\protect\raisebox{-.3ex}[1ex][0ex]{\Large{$\cdot$}}}}}
\newcommand{\cross}{\mathbin{\times}}
\renewcommand{\exp}{\operatorname{exp}}
\DeclareMathOperator{\End}{End}
\newcommand{\mapsinto}{\mathrel{\hookrightarrow}}
\renewcommand{\phi}{\varphi}
\renewcommand{\epsilon}{\varepsilon}
\def\crn#1#2{{\vcenter{\vbox{
        \hbox{\kern#2pt \vrule width.#2pt height#1pt
           }
          \hrule height.#2pt}}}}
\newcommand{\cadlag}{{{\it c\`adl\`ag}}}
\newcommand{\tw}{\widetilde}
\newcommand{\<}{\langle}
\renewcommand{\>}{\rangle}
\renewcommand{\hbar}{{\overline h}}
\newcommand{\pre}[2]{{{\vphantom{#2}}^{#1}}\kern-.2ex{#2}}
\theoremstyle{plain}
\newtheorem{theorem}{Theorem}[section]
\newtheorem{lemma}[theorem]{Lemma}
\newtheorem{assumption}[theorem]{Assumption}
\theoremstyle{definition}
\newtheorem{example}[theorem]{Example}
\theoremstyle{remark}
\newtheorem{remark}[theorem]{Remark}
\numberwithin{equation}{section}
\def\prt{\partial}
\def\R{{\bf R}}
\def\eps{\varepsilon}
 \def\ol{\overline}
 \def\bP{{\bf P}}
\def\bE{{\bf E}}
\def\bone{{\bf 1}}
\def\n{{\bf n}}
\def\bv{{\bf v}}
\def\bw{{\bf w}}
\def\bz{{\bf z}}
\def\prt{\partial}
\def\E{{\mathcal E}}
\def\B{{\mathcal B}}
\def\S{{\mathcal S}}
\def\T{{\mathcal T}}
\def\A{{\mathcal A}}
\def\tngt{\T}
\def\sh{\S}
\def\cD{{\mathcal D}}
\begin{document}

\baselineskip=1.3\baselineskip

\title{\bf Multiplicative functional for reflected Brownian motion
via deterministic ODE}
\author{
{\bf Krzysztof Burdzy}  \ and \ {\bf John M. Lee} }
\address{Department of Mathematics, Box 354350,
University of Washington, Seattle, WA 98195}
\thanks{Research supported in part by NSF Grants DMS-0600206 and DMS-0406060. }

\begin{abstract}
We prove that a sequence of semi-discrete approximations
converges to a multiplicative functional for reflected Brownian
motion, which intuitively represents the Lyapunov exponent for
the corresponding stochastic flow. The method of proof is based
on a study of the deterministic version of the problem and the
excursion theory.
\end{abstract}

\keywords{Reflected Brownian motion, multiplicative functional}
\subjclass{60J65; 60J50}

\maketitle

\pagestyle{myheadings} \markboth{}{Multiplicative functional
for reflected Brownian motion}

\section{Introduction}\label{section:article_intro}

This article is the first part of a project devoted to path
properties of a stochastic flow of reflected Brownian motions.
We will first outline the general direction of the project and
then we will comment on the results contained in the current
article.

Consider a bounded smooth domain $D \subset\R^n$, $n\geq 2$,
and for any $x\in \ol D$, let $X^x_t$ be reflected Brownian
motion in $D$, starting from $X^x_0 = x $. Construct all
processes $X^x$ so that they are driven by the same
$n$-dimensional Brownian motion. It has been proved in
\cite{BCJ} that in some planar domains, for any $x\ne y$, the
limit $\lim_{t\to \infty} \log |X^x _t - X^y_t| /t =
\Lambda(D)$ exists a.s. Moreover, an explicit formula has been
given for the limit $\Lambda(D)$, in terms of geometric
quantities associated with $D$. Our ultimate goal is to prove
an analogous result for domains in $\R^n$ for $n\geq 3$.

The higher dimensional case is more difficult to study for
several reasons. First, we believe that the multidimensional
quantity analogous to $\Lambda(D)$ in the two dimensional case
cannot be expressed directly in terms of geometric properties
of $D$. Instead, it has to be expressed using the stationary
distribution for the normalized version of the multiplicative
functional studied in the present paper. Second,
non-commutativity of projections is a more challenging
technical problem in dimensions $n\geq 3$.

The result of \cite{BCJ} mentioned above contains an implicit
assertion about another limit, namely, in the space variable
for a fixed time. In other words, one can informally infer the
existence and value of the limit $\lim_{\eps\downarrow 0}
(X^{x+\eps \bv}_t - X^x_t)/\eps= \tw \A_t \bv$, for $\bv \in
\R^n$. The limit operator $\tw \A_t$, regarded as a function of
time, is a linear multiplicative functional of reflected
Brownian motion. Its form is considerably more complex and
interesting in dimensions $n\ge 3$ than in two dimensions.

Our overall plan is first to prove the differentiability in the
space variable stated in the last paragraph. Then we will prove
the existence and uniqueness of the stationary distribution for
the normalized version of $\tw A_t$. And then we will prove the
formula for the rate of convergence of $|X^x_t - X^y_t| $ to 0,
as $t\to \infty$.

The immediate goal of the present paper is much more modest
than the overall plan outlined above. We will deal with some
foundational issues related to the application of our main
method, excursion theory, to the convergence of semi-discrete
approximations to the multiplicative functional described
above. We will briefly review some of the existing literature
on the subject, so that we can place out own results in an
appropriate context.

The multiplicative functional $\tw A_t$ appeared in a number of
publications discussing reflected Brownian motion, starting
with \cite{A,IKpaper}, and later in \cite{IKbook,H}. None of
these publications contains the analysis of the deterministic
version of the multiplicative functional. This is what we are
going to do in Section \ref{section:determ}. In a sense, we are
trying to see whether the approach of \cite{LS} could be
applied in our case---that approach was to develop a
deterministic theory that could be applied to stochastic
processes path by path. Unfortunately, our result on
deterministic ODE's do not apply to reflected Brownian motion,
roughly speaking, for the same reason why the Riemann-Stiltjes
does not work for integrals with respect to Brownian motion.

Nevertheless, our deterministic results are not totally
disjoint from the second, probabilistic section. In fact, our
basic approach developed in Lemma \ref{lemma:finite-Y-est} is
just what we need in Section \ref{section:diff}. The main
theorem of Section \ref{section:diff} proves existence of the
multiplicative functional using semi-discrete approximations.
The result does not seem to be known in this form, although it
is obviously close to some theorems in \cite{A, IKpaper, H}.
However, the main point is not to give a new proof to a
slightly different version of a known result but to develop
estimates using excursion techniques that are analogous to
those in \cite{BCJ}, and that can be applied to study $X^x_t -
X^y_t$.

We continue with some general review of literature. The
differentiability of $X^x_t$ in the initial data was proved in
\cite{DZ} for reflected diffusions. The main difference between
our project and that in \cite{DZ} is that that paper was
concerned with diffusions in $(0,\infty)^n$, and our main goal
is to study the effect of the curvature of $\prt D$.
Deterministic transformations based on reflection were
considered, for example, in \cite{LS,DI,DR}. Synchronous
couplings of reflected Brownian motions in convex domains were
studied in \cite{CLJ1, CLJ2}, where it was proved that under
mild assumptions, $X^x_t - X^y_t$ is not 0 at any finite time.
Our estimates in Section \ref{section:diff} are so robust that
they indicate that Theorem \ref{thm:diffskor} holds for the
trace of a degenerate diffusion on $\prt D$, defined as in
\cite{CS, MO}, with the density of jumps having different
scaling properties than that for reflected Brownian motion. In
other words, the main theorem of Section \ref{section:diff} is
likely to hold in the case when the trace of the reflected
diffusion is any ``stable-like'' process on $\prt D$. We do not
present this generalization because, as far as we can tell, the
multiplicative functional $\tw \A_t$ does not represent the
limit $\lim_{\eps\downarrow 0} (X^{x+\eps \bv}_t - X^x_t)/\eps$
for flows of degenerate reflected diffusions.

\bigskip

We are grateful to Elton Hsu for very helpful advice.

\section{Deterministic differential
equation}\label{section:determ}

\subsection{Geometric Preliminaries}\label{section:intro}
Throughout this
section,
 $M$ will be a
$C^2$,
 properly embedded, orientable hypersurface (i.e., submanifold
of codimension $1$) in $\R^{n}$, endowed with a
 unit normal vector field $\n$. The properness condition means
that the inclusion map $M\mapsinto\R^{n}$ is a proper map (the
inverse image of every compact set is compact), which is
equivalent to $M$ being a a closed subset of $\R^{n}$. For any
$R>0$,
let $M_R $ denote the intersection of $M$ with the closed ball
of radius $R$ around the origin in $\R^n$, and note that $M_R$
is a compact subset of $M$.

We consider $M$ as a Riemannian manifold with the induced
metric.  We use the notation
$\left<\centerdot,\centerdot\right>$ for both the Euclidean
inner product on $\R^n$ and its restriction to $\tngt _xM$ for
any $x\in M$, and $\left|\centerdot\right|$ for the associated
norm.

For any $x\in M$, let $\pi_x\colon \R^{n}\to \tngt _x M$ denote
the orthogonal projection onto the tangent space $\tngt _x M$,
so
\begin{equation}\label{eq:pi}
\pi_x \bz  = \bz  - \<\bz ,\n(x)\>\n(x),
\end{equation}
and let $\sh (x)\colon \tngt _xM\to \tngt _xM$ denote the {\it
shape operator} (also known as the {\it Weingarten map}), which
is the symmetric linear endomorphism of $\tngt _xM$ associated
with the second fundamental form.  It is characterized by
\begin{equation}\label{eq:def-S}
\sh (x) \bv  = - \partial_\bv  \n(x), \qquad \bv \in \tngt _xM,
\end{equation}
where $\partial_\bv $ denotes the ordinary Euclidean
directional derivative in the direction of $\bv $. If $\gamma
\colon[0,T]\to M$ is a smooth curve in $M$, a {\it vector field
along $\gamma $} is a smooth map $\bv \colon [0,T]\to M$ such
that $\bv (t)\in \tngt _{\gamma (t)}M$ for each $t$. The {\it
covariant derivative of $\bv $ along $\gamma $} is given by
\begin{align*}
\cD _t\bv (t) &:= \bv '(t) - \<\bv (t), \sh (\gamma (t)) \gamma '(t)\>\n(\gamma (t)) \\
&= \bv '(t) + \<\bv (t), \partial_t (\n\circ \gamma )(t)\> \n(\gamma (t)) .
\end{align*}
The eigenvalues of $\sh (x)$ are the principal curvatures of
$M$ at $x$, and its determinant is the Gaussian curvature. We
extend $\sh (x)$ to an endomorphism of $\R^{n}$ by defining
$\sh (x)\n (x) = 0$. It is easy to check that $\sh (x)$ and
$\pi_x$ commute, by evaluating separately on $\n (x)$ and on
$\bv \in \tngt _xM$.

The following lemma expresses some elementary observations that
we will use below.  Most of these follow easily from the fact
that smooth maps satisfy uniform local Lipschitz estimates,
so we leave the proof to the reader.
For any linear map $\A\colon \R^{n}\to \R^{n}$, we let $\|\A\|$
denote the operator norm.

\begin{lemma}\label{lemma:globalK}
For any $R>0$ and $T>0$, there exists a constant $K$ depending
only on $M$, $R$, and $T$ such that the following estimates
hold for all $x,y\in M_R$, $0\le l \le T$, $t\ge 0$ and $\bz\in
\R^n$:
\begin{align}
\|\pi_x - \pi_y\| &\le K|x-y|.\label{eq:pi-est}\\
\|\sh (x)\|&\le K.\label{eq:S-norm-est}\\
\|\sh (x)-\sh (y)\| &\le K|x-y|.\\
\|e^{t \sh (x)}\| &\le e^{K t}.\label{eq:e^S-est}\\
|e^{t \sh (x)} \bz| &\ge e^{-K  t}|\bz|. \label{eq:e^S-estlow}\\
\|e^{l \sh (x)} - \Id\| &\le Kl.\label{eq:e^S-est2}\\
\|e^{l \sh (x)} - e^{l \sh (y)}\| &\le {K }l\,|x-y|.\label{eq:e^S-est3}\\
|\n (x)-\n (y)| &\le K|x-y|.\label{eq:N-Lip-est}
\end{align}
\end{lemma}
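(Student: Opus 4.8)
The plan is to deduce all eight inequalities from two elementary ingredients on the compact set $M_R$: that the maps $x\mapsto\n(x)$ and $x\mapsto\sh(x)$ are bounded and Lipschitz with respect to the ambient distance $|x-y|$, and that the matrix exponential obeys the usual norm bounds, in particular Duhamel's identity $e^{lA}-e^{lB}=\int_0^l e^{sA}(A-B)e^{(l-s)B}\,ds$. Of the estimates in the lemma, \eqref{eq:S-norm-est}, \eqref{eq:N-Lip-est}, and the unlabeled bound $\|\sh(x)-\sh(y)\|\le K|x-y|$ are themselves instances of the first ingredient, while \eqref{eq:pi-est} and \eqref{eq:e^S-est}--\eqref{eq:e^S-est3} follow from the two ingredients by one-line computations. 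Accordingly I would first establish the first ingredient, then derive \eqref{eq:pi-est}, then the four exponential estimates, and finally take $K$ to be the largest of the finitely many constants produced.

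To establish the first ingredient: $\n$ is $C^1$ on $M$ (being assembled from first derivatives of the embedding), and $\sh$ is $C^1$ as well under the mild regularity of $M$ used in the application, so both are bounded on the compact set $M_R$; this already gives \eqref{eq:S-norm-est}. The one point needing care for the Lipschitz bounds is that $C^1$-regularity on $M$ controls variation a priori only in the \emph{intrinsic} distance of $M$, whereas the lemma wants control by the \emph{ambient} distance $|x-y|$. I would bridge this using the local graph structure together with compactness: near each $p\in M$ there is a ball $U\subset\R^n$ such that $M\cap U$ is the graph of a $C^2$ function over $\tngt_pM$, and the graph parametrization $\Phi\colon V\subset\tngt_pM\to M\cap U$ and its inverse are both Lipschitz ($\Phi^{-1}$ with constant $1$, being an orthogonal projection, and $\Phi$ because the graphing function has bounded gradient on $\overline V$); since $\n$ and $\sh$ are $C^1$ in the graph coordinate, this gives an ambient-Lipschitz estimate near $p$. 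If no such estimate were uniform over $M_R$, there would be $x_k,y_k\in M_R$ with $|x_k-y_k|\to0$ and $|\n(x_k)-\n(y_k)|/|x_k-y_k|\to\infty$; by compactness of $M_R$ --- this is where properness of the embedding enters, ensuring $M_R$ is compact and that limit points of sequences in $M$ lie in $M$ --- a subsequence has $x_k,y_k\to p\in M_R$, and for $k$ large both points lie in a single graph chart about $p$, contradicting the local estimate. Hence $|\n(x)-\n(y)|\le K_0|x-y|$ for $x,y\in M_R$ with $|x-y|<\delta$, for suitable $\delta,K_0>0$; for $|x-y|\ge\delta$ one uses $|\n(x)-\n(y)|\le2\le(2/\delta)|x-y|$. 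This proves \eqref{eq:N-Lip-est}, and the identical argument with $\n$ replaced by $\sh$ proves $\|\sh(x)-\sh(y)\|\le K|x-y|$.

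The remaining estimates are then bookkeeping. From \eqref{eq:pi}, $\pi_x\bz-\pi_y\bz=\langle\bz,\n(y)-\n(x)\rangle\n(y)+\langle\bz,\n(x)\rangle(\n(y)-\n(x))$, so $|\pi_x\bz-\pi_y\bz|\le2\,|\bz|\,|\n(x)-\n(y)|$, and \eqref{eq:pi-est} follows from \eqref{eq:N-Lip-est}. Writing $A=\sh(x)$, $B=\sh(y)$, and letting $K_1$ be an upper bound for $\|\sh(\cdot)\|$ on $M_R$: \eqref{eq:e^S-est} is the standard $\|e^{tA}\|\le e^{t\|A\|}\le e^{K_1 t}$; \eqref{eq:e^S-estlow} follows from $|\bz|=|e^{-tA}e^{tA}\bz|\le\|e^{-tA}\|\,|e^{tA}\bz|\le e^{K_1 t}\,|e^{tA}\bz|$; \eqref{eq:e^S-est2} follows from $e^{lA}-\Id=\int_0^l Ae^{sA}\,ds$, which gives $\|e^{lA}-\Id\|\le lK_1 e^{K_1 T}$ for $0\le l\le T$; and \eqref{eq:e^S-est3} follows from Duhamel's identity, which gives $\|e^{lA}-e^{lB}\|\le l\,e^{K_1 T}\,\|\sh(x)-\sh(y)\|$, to which the bound just proved for $\|\sh(x)-\sh(y)\|$ applies.

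I expect no serious obstacle --- the paper itself flags the lemma as routine. The one step that deserves a moment's thought is the passage in the second paragraph from $C^1$-regularity (which bounds variation only in the intrinsic distance of $M$) to ambient-Lipschitz control on the compact set $M_R$; here compactness of $M_R$, equivalently properness of the embedding, is what does the work, and without it the Lipschitz constants for $\n$ and $\sh$ in the ambient metric could fail to be uniform.
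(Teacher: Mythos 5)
Your proposal is correct, and it is essentially the paper's own (omitted) argument: the paper leaves the lemma to the reader, citing exactly the ingredients you develop --- uniform local Lipschitz estimates in graph charts combined with compactness of $M_R$, followed by standard matrix-exponential bounds (Duhamel, $\|e^{tA}\|\le e^{t\|A\|}$). The only caveat, which your hedge about ``the regularity used in the application'' already acknowledges, is that the Lipschitz bound on $\sh$ and hence \eqref{eq:e^S-est3} require a bit more than the $C^2$ regularity announced at the start of the section (e.g.\ the smoothness assumed in Theorem \ref{thm:existence-uniqueness}), since for a merely $C^2$ hypersurface the shape operator is only continuous.
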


Another useful estimate is the following.

\begin{lemma}\label{lemma:pipi-pipi}
For any $R>0$, there exists a constant $C$ depending only on $M$ and $R$
such that for all
$w,x,y,z\in M_R$, the following operator-norm estimate holds:
\begin{displaymath}
\left\|\pi_{z} \circ \left(\pi_{y} - \pi_{x}\right)\circ \pi_{w}\right\|
\le C\big(\left|w-y\right|\,\left|y-z\right| +
\left|w-x\right|\,\left|x-z\right| \big).
\end{displaymath}
\end{lemma}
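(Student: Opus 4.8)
The plan is to exploit the identity $\pi_x=\Id-\n(x)\n(x)^{\mathsf T}$ (viewing $\n(x)$ as a column vector), which gives
\[
\pi_y-\pi_x=\n(x)\n(x)^{\mathsf T}-\n(y)\n(y)^{\mathsf T}.
\]
I would symmetrize this by writing it as a sum of two rank-one terms each of which carries one factor of $\n(x)-\n(y)$:
\[
\pi_y-\pi_x = -\bigl(\n(y)-\n(x)\bigr)\n(x)^{\mathsf T} - \n(y)\bigl(\n(y)-\n(x)\bigr)^{\mathsf T}.
\]
Then $\pi_z\circ(\pi_y-\pi_x)\circ\pi_w$ splits as $-\bigl(\pi_z(\n(y)-\n(x))\bigr)\bigl(\pi_w\n(x)\bigr)^{\mathsf T}-\bigl(\pi_z\n(y)\bigr)\bigl(\pi_w(\n(y)-\n(x))\bigr)^{\mathsf T}$, and the operator norm of a rank-one operator $\bv\bw^{\mathsf T}$ is exactly $|\bv|\,|\bw|$.

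The key point is that $\pi_w\n(x)$ is small when $w$ is close to $x$, and $\pi_z\n(y)$ is small when $z$ is close to $y$. Indeed $\pi_x\n(x)=0$, so $\pi_w\n(x)=(\pi_w-\pi_x)\n(x)$, and by \eqref{eq:pi-est} together with $|\n(x)|=1$ we get $|\pi_w\n(x)|\le K|w-x|$; similarly $|\pi_z\n(y)|=|(\pi_z-\pi_y)\n(y)|\le K|z-y|$. For the other factors I would bound crudely: $|\pi_z(\n(y)-\n(x))|\le|\n(y)-\n(x)|\le K|x-y|$ by \eqref{eq:N-Lip-est}, and likewise $|\pi_w(\n(y)-\n(x))|\le K|x-y|$. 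Combining, the first rank-one piece has norm at most $K^2|x-y|\,|w-x|$ and the second at most $K^2|z-y|\,|x-y|$.

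This already gives a bound of the shape $C|x-y|\bigl(|w-x|+|y-z|\bigr)$, which is a little stronger than (hence implies, after noting $|x-y|\le|x-w|+|w-y|\le\ldots$ — or more directly by the triangle inequality $|x-y|\le|w-x|+|w-y|$ and symmetrically $|x-y|\le|y-z|+|x-z|$, then multiplying out) the asserted bound $C\bigl(|w-y|\,|y-z|+|w-x|\,|x-z|\bigr)$. Actually the cleanest route is to keep the factor $|x-y|$ and distribute it: in the first term write $|x-y|\le|x-z|+|z-y|$... but I prefer instead to redo the split more symmetrically so that the two terms come out as $|\pi_z(\n(x)-\n(y))|\,|\pi_w\n(x)|$ and $|\pi_z\n(y)|\,|\pi_w(\n(x)-\n(y))|$ and then bound $|\pi_z(\n(x)-\n(y))|=|(\pi_z-\pi_x)(\n(x)-\n(y))+\pi_x(\n(x)-\n(y))|$; since $\pi_x\n(x)=0$ this is $|(\pi_z-\pi_x)(\n(x)-\n(y))-\pi_x\n(y)|\le K|z-x|\,|x-y|+|(\pi_x-\pi_y)\n(y)|\le K|z-x|\,|x-y|+K|x-y|^2$, and $|x-y|^2\le|x-y|(|x-w|+|w-y|)$ — this is getting baroque. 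The honest statement is: the main obstacle is purely bookkeeping — choosing the rank-one decomposition and the insertions of $\pi_x\n(x)=0=\pi_y\n(y)$ so that each of the four surviving products already has the bilinear form $|w-\cdot|\,|\cdot-z|$ — and once the decomposition is fixed, every estimate is an immediate application of Lemma~\ref{lemma:globalK}, with $C$ a fixed multiple of $K^2$.
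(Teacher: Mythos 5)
Your rank-one strategy is reasonable in spirit, but as written the argument has a genuine gap: the bound you actually establish, $\|\pi_z(\pi_y-\pi_x)\pi_w\|\le K^2|x-y|\bigl(|w-x|+|y-z|\bigr)$, does not imply the asserted estimate, and the triangle-inequality ``multiplying out'' you sketch cannot repair it. Take $w=y$ and $z=x$ with $x\ne y$: the lemma's right-hand side vanishes (consistently, since $\pi_x(\pi_y-\pi_x)\pi_y=\pi_x\pi_y-\pi_x\pi_y=0$), whereas your bound equals $2K^2|x-y|^2>0$, so no constant can intervene. The root cause is that the decomposition $\pi_y-\pi_x=-(\n(y)-\n(x))\,\n(x)^{\mathsf T}-\n(y)\,(\n(y)-\n(x))^{\mathsf T}$ couples $x$ to $y$, and after sandwiching by $\pi_z$ and $\pi_w$ it inevitably produces the cross term $|w-x|\,|y-z|$, which is genuinely not dominated by $|w-y|\,|y-z|+|w-x|\,|x-z|$ (same configuration). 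Your ``more symmetric redo'' is then abandoned mid-computation, and the closing claim that only bookkeeping remains is precisely the point at issue: the entire content of the lemma is that each surviving product pairs $w$ with the middle point ($x$ or $y$) and that same middle point with $z$, and the split you committed to does not have that structure.

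The fix is to not difference the normals at all. Since $\pi_y=\Id-\langle\cdot,\n(y)\rangle\n(y)$, for every $\bv$,
\begin{displaymath}
\pi_z(\pi_y-\pi_x)\pi_w\bv=\langle\pi_w\bv,\n(x)\rangle\,\pi_z\n(x)-\langle\pi_w\bv,\n(y)\rangle\,\pi_z\n(y),
\end{displaymath}
and now every factor already has the required form: $|\langle\pi_w\bv,\n(x)\rangle|=|\langle\bv,\pi_w\n(x)\rangle|=|\langle\bv,(\pi_w-\pi_x)\n(x)\rangle|\le K|w-x|\,|\bv|$ by \eqref{eq:pi-est} and $\pi_x\n(x)=0$, while $|\pi_z\n(x)|=|(\pi_z-\pi_x)\n(x)|\le K|x-z|$, and similarly for the $y$ terms; this yields the lemma with $C=K^2$. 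This is in substance the same cancellation the paper exploits (there it is carried out vector-wise, expanding inner products via $\langle\n(x),\n(y)\rangle=1-\tfrac12|\n(x)-\n(y)|^2$ and using $\langle\bv,\n(w)\rangle=0$ for $\bv\in\tngt_wM$, so that each of the four resulting terms carries one $w$-versus-middle factor and one middle-versus-$z$ factor). So your overall route could be made to work, but the specific decomposition and the claimed implication from your weaker bound are where the proof fails.
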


\begin{proof}
Using the fact that $\n $ is a unit vector field and expanding
$|\n (x)-\n (y)|^2$ in terms of inner products, we obtain
\begin{displaymath}
\<\n (x),\n (y)\> = \tfrac12 (|\n (x)|^2 + |\n (y)|^2 - |\n
(x)-\n (y)|^2) = 1 - \tfrac12 |\n (x)-\n (y)|^2.
\end{displaymath}

Suppose $w,x,y,z\in M_R$ and $\bv \in\R^n$.  If $\pi_w\bv =0$,
then the estimate holds trivially, so we may as well assume
that $\bv \in \tngt _wM$. Expanding the projections as in
\eqref{eq:pi} and using the fact that $\pi_w\bv =\bv $, we
obtain
\begin{align*}
\pi_{z}&\bigl(\pi_{y} - \pi_x\bigr)\pi_w\bv\\
&= \pi_{z}(\bv  - \<\bv ,\n (y)\>\n (y))
- \pi_{z}(\bv  - \<\bv ,\n (x)\>\n (x))\\
&= (\bv  - \<\bv ,\n (y)\>\n (y)) \\
&\quad- \bigl( \<\bv ,\n (z)\> \n (z) - \<\bv ,\n (y)\>\<\n (y),\n (z)\>\n (z)\bigr)\\
&\quad - (\bv  - \<\bv ,\n (x)\>\n (x))\\
&\quad + \bigl( \<\bv ,\n (z)\> \n (z) - \<\bv ,\n (x)\>\<\n (x),\n (z)\>\n (z)\bigr)\\
&= - \<\bv ,\n (y)\>\n (y) + \<\bv ,\n (x)\>\n (x) \\
&\quad + \<\bv ,\n (y)\>\bigl(1 - \tfrac12 |\n (y)-\n (z)|^2\bigr)\n (z)\\
&\quad - \<\bv ,\n (x)\>\bigl(1 - \tfrac12 |\n (x)-\n (z)|^2\bigr)\n (z)\\
&= -\<\bv ,\n (y)\>(\n (y)-\n (z)) + \<\bv ,\n (x)\>(\n (x)-\n (z)) \\
&\quad -\tfrac 12 \<\bv ,\n (y)\>|\n (y)-\n (z)|^2\n (z)
+ \tfrac12\<\bv ,\n (x)\>|\n (x)-\n (z)|^2\n (z).
\end{align*}
Using the fact that $\<\bv ,\n (w)\>=0$, this can be written
\begin{align*}
\pi_{z}\bigl(\pi_{y} - \pi_x\bigr)\pi_w\bv
&= -\<\bv ,\n (w) - \n (y)\>(\n (y)-\n (z)) \\
&\quad + \<\bv ,\n (w) - \n (x)\>(\n (x)-\n (z)) \\
&\quad -\tfrac 12 \<\bv ,\n (w)-\n (y)\>|\n (y)-\n (z)|^2\n (z)\\
&\quad + \tfrac12\<\bv ,\n (w) - \n (x)\>|\n (x)-\n (z)|^2\n (z).
\end{align*}
The desired estimate follows from
\eqref{eq:N-Lip-est} and the fact that
\begin{displaymath}
|\n (x)-\n (y)|^2 \le \left( |\n (x)| + |\n (y)|\right)|\n
(x)-\n (y)| \le 2K|x-y|.
\end{displaymath}
\end{proof}

\subsection{Analytic Preliminaries}

Let $T$ be a positive real number.
We let $\BV([0,T];\R)$ denote the set of functions $u\colon [0,T]\to \R$
of bounded variation,
and $\NBV([0,T];\R)\subset \BV([0,T];\R)$ the subset consisting of
functions that are right-continuous.
By convention, we will consider each
$u\in \NBV([0,T];\R)$ to be a function defined
on all of $\R$ by setting $u(t)=0$ for $t< 0$ and $u(t)=u(T)$ for $t>T$;
the extended function is still right-continuous and of bounded variation.
With this understanding,
we will follow the conventions of \cite{Folland}, and most of the
properties of $\NBV([0,T];\R)$ that we use can be found there.

It is easy to check that
$\NBV([0,T];\R)$ is closed under pointwise products and sums.
Functions in $\NBV([0,T];\R)$ have bounded images,
at most countably many
discontinuities, and well-defined left-hand limits at
each discontinuity.  In particular, they are
examples of \cadlag\ functions ({\it continue \`a droite, limites \`a gauche}).
(In fact, $\NBV([0,T];\R)$ is exactly the set of
\cadlag\ functions of bounded variation.)
For any $u\in \NBV([0,T];\R)$ and any $s\in [0,T]$,
we set
\begin{displaymath}
u(s-) = \lim_{t\nearrow s} u(t),
\end{displaymath}
and we define the {\it jump of $u$ at $s$}
to be
\begin{displaymath}
\Delta_s(u) = u(s) - u(s-).
\end{displaymath}
Note that $u(0-) = 0$ and $\Delta_0(u) = u(0)$
by our conventions.

It follows from
elementary measure theory that
for each $u\in \NBV([0,T];\R)$,
there is a unique signed Borel measure $du$ on $[0,T]$
characterized by
\begin{displaymath}
du \bigl( (a,b] \bigr) = u(b) - u(a), \quad t\in [0,T].
\end{displaymath}
Because this
measure has atoms exactly at points $t\in [0,T]$ where $u$ is discontinuous,
we have to be careful to indicate whether endpoints are included or excluded
in integrals.  For example, we have the following versions of
the fundamental theorem of calculus for $a,b\in[0,T]$:
\begin{align*}
\int_{(a,b]} du &= u(b) - u(a);&
\int_{[a,b]} du &= u(b) - u(a-);\\
\int_{(a,b)} du &= u(b-) - u(a);&
\int_{[a,b)} du &= u(b-) - u(a-).
\end{align*}
The {\it total variation} of $u$, denoted by $\|du\|$,
is given by either of two formulas:
\begin{align*}
\|du\|
&= \sup \left\{ \sum_{i=1}^k |u(x_i) - u(x_{i-1})|: 0=x_0<x_1<\dots < x_k=T\right\}\\
&= \int_{[0,T]}|du|.
\end{align*}
It follows from our conventions that $\|u\|_\infty\le \|du\|$.

For $u\in \NBV([0,T];\R)$, we will use the notation $u_-$ to denote the
function $u_-(t) = u(t-)$.  Note that $u_-$ has bounded variation, but is
left-continuous rather than right-continuous.

\begin{lemma}\label{lemma:byparts}
For any $u,v\in \NBV([0,T];\R)$ and $a,b\in[0,T]$, the following
integration by parts formula holds:
\begin{equation}\label{eq:intbyparts}
\int_{(a,b]} u\,dv + \int_{(a,b]} v_-\, du
= u(b)v(b) - u(a)v(a).
\end{equation}
\end{lemma}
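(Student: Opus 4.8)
The plan is to derive \eqref{eq:intbyparts} from Fubini's theorem applied to the product of the two signed measures $du$ and $dv$ on the square $Q := (a,b]\times(a,b]\subset\R^2$. Since $u$ and $v$ have bounded variation, $du$ and $dv$ are finite signed Borel measures with finite total variations, so $|du|\otimes|dv|$ is a finite measure on $Q$; consequently all the iterated integrals below converge absolutely and Fubini's theorem is applicable to $du\otimes dv$ (for instance by splitting $u$ and $v$ into their increasing and decreasing parts). Integrating the constant function $1$ over $Q$ in either order gives the base identity $(du\otimes dv)(Q)=(u(b)-u(a))(v(b)-v(a))$.

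Next I would partition $Q$ into the two disjoint pieces $L:=\{(s,t)\in Q:\ t<s\}$ and $U:=\{(s,t)\in Q:\ s\le t\}$, with the diagonal deliberately assigned to $U$. Computing $(du\otimes dv)(L)$ by integrating first in $t$: for fixed $s\in(a,b]$ the $t$-slice is $(a,s)$, so by the fundamental-theorem-of-calculus formulas recorded above $dv\big((a,s)\big)=v(s-)-v(a)=v_-(s)-v(a)$, whence $(du\otimes dv)(L)=\int_{(a,b]}v_-\,du-v(a)\big(u(b)-u(a)\big)$. Computing $(du\otimes dv)(U)$ by integrating first in $s$: for fixed $t\in(a,b]$ the $s$-slice is $(a,t]$, so $du\big((a,t]\big)=u(t)-u(a)$, whence $(du\otimes dv)(U)=\int_{(a,b]}u\,dv-u(a)\big(v(b)-v(a)\big)$. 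Adding these two expressions, setting the sum equal to $(du\otimes dv)(Q)$, and expanding the product on the right, the cross terms cancel and one is left with precisely \eqref{eq:intbyparts}.

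The only genuinely delicate point is the bookkeeping with atoms. One must make sure that $L$ and $U$ really do partition $Q$ — hence that the diagonal $\{s=t\}$ lies in exactly one of them — and, correspondingly, that the one-dimensional slices used in the two iterated integrals are the correct sets, namely the \emph{open} interval $(a,s)$ in the computation over $L$ and the \emph{half-open} interval $(a,t]$ in the computation over $U$. This asymmetry in the endpoints is exactly what produces the asymmetry between $u$ and $v_-$ in \eqref{eq:intbyparts}; had we placed the diagonal in $L$ instead, the same computation would yield the companion identity with the roles of $u$ and $v$ interchanged (that is, with $u_-$ and $v$). Everything else in the argument is routine. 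An alternative route would be to first verify the formula when $u$ and $v$ are step functions subordinate to a common finite partition of $[0,T]$ and then pass to a limit, but the Fubini argument is preferable because it handles the jumps automatically and requires no approximation.
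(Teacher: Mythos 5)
Your proof is correct and is essentially the paper's argument: the paper likewise proves \eqref{eq:intbyparts} by applying Fubini's theorem to the product measure $du\times dv$, working over the triangle $\Omega=\{(s,t):a<s\le t\le b\}$ (following Folland, Thm.\ 3.36), with the placement of the diagonal accounting for the $v_-$ exactly as in your bookkeeping. The only cosmetic difference is that the paper evaluates that single triangle by iterating in both orders, whereas you split the square $(a,b]\times(a,b]$ into two triangles and evaluate each in one order before comparing with the product $(u(b)-u(a))(v(b)-v(a))$.
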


\begin{proof}
This follows as in \cite[Thm.\ 3.36]{Folland} by applying
Fubini's theorem to the integral $\int_\Omega du\cross dv$,
where $\Omega$ is the triangle $\{(s,t): a< s\le t\le b\}$.
\end{proof}

\begin{lemma}\label{lemma:productrule}
The following product rules hold for $u,v\in \NBV([0,T];\R)$:
\begin{align*}
d(uv) &= u\,dv + v_-\, du \\
&= u_-\,dv + v\, du\\
&=
u\,dv + v\, du - \sum_i \Delta_{s_i}(u)\Delta_{s_i}(v)\delta_{s_i},
\end{align*}
where
$\delta_{s_i}$ is the Dirac mass at $s_i$, and
the sum is over the countably many points $s_i\in[0,T]$
at which both $u$ and $v$ are discontinous.
\end{lemma}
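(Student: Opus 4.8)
The plan is to derive all three identities from the integration-by-parts formula of Lemma~\ref{lemma:byparts} together with a standard uniqueness argument for finite measures. As a preliminary remark, since $\NBV([0,T];\R)$ is closed under products we have $uv\in\NBV([0,T];\R)$, so $d(uv)$ is a well-defined signed Borel measure on $[0,T]$; and since $u$ and $v$ are bounded, each of $u\,dv$, $v_-\,du$, $u_-\,dv$, $v\,du$ (the measure with bounded density against a finite signed measure) is itself a finite signed measure, as is $d(uv)$. Finally $\sum_i|\Delta_{s_i}(u)\,\Delta_{s_i}(v)|\le\|v\|_\infty\,\|du\|<\infty$, so $\sum_i\Delta_{s_i}(u)\Delta_{s_i}(v)\delta_{s_i}$ is a finite signed measure too. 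Everything in sight being a finite signed measure, it suffices to check each identity on a $\pi$-system generating $\mathcal{B}([0,T])$.

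For the first formula, Lemma~\ref{lemma:byparts} says exactly that $u\,dv + v_-\,du$ and $d(uv)$ assign the same value, namely $u(b)v(b)-u(a)v(a)$, to every half-open interval $(a,b]\subset[0,T]$. The only remaining generator to check is the singleton $\{0\}$, and here a one-line computation using the conventions $u(0-)=v(0-)=0$ gives $d(uv)(\{0\})=u(0)v(0)=(u\,dv)(\{0\})+(v_-\,du)(\{0\})$, the second summand vanishing because $v_-(0)=0$. Since $\{0\}$ together with the intervals $(a,b]$, $0\le a\le b\le T$, form a $\pi$-system generating $\mathcal{B}([0,T])$, the two finite measures agree. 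The second formula is then immediate: apply the first with $u$ and $v$ interchanged to get $d(vu)=v\,du+u_-\,dv$, and recall $uv=vu$.

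For the third formula I would subtract: using the first identity,
\begin{displaymath}
u\,dv + v\,du - d(uv) = v\,du - v_-\,du = (v-v_-)\,du .
\end{displaymath}
The function $v-v_-$ vanishes off the countable set $E$ of discontinuities of $v$, where its value at $t$ is $\Delta_t(v)$; restricted to the countable set $E$, the measure $du$ is purely atomic with $du(\{t\})=\Delta_t(u)$. Hence $(v-v_-)\,du$ is the purely atomic measure $\sum_{t\in E}\Delta_t(v)\Delta_t(u)\,\delta_t$, in which a term survives only when $t$ is a discontinuity of $u$ as well, i.e.\ only at the common discontinuities $s_i$; this is exactly $\sum_i\Delta_{s_i}(u)\Delta_{s_i}(v)\delta_{s_i}$, and the third formula follows.

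There is no real obstacle here; the only points requiring any care are the bookkeeping at the endpoint $t=0$ forced by the sign conventions, the observation that all the measures involved are finite (so that agreement on a generating $\pi$-system is enough), and the verification that $(v-v_-)\,du$ is concentrated on the countable discontinuity set of $v$.
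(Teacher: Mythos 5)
Your proof is correct and follows essentially the same route as the paper: the first two identities come from the integration-by-parts formula of Lemma~\ref{lemma:byparts} (you merely make explicit the $\pi$-system/uniqueness argument and the bookkeeping at the atom $\{0\}$ that the paper leaves implicit), and the third is obtained exactly as in the paper by identifying $(v-v_-)\,du$ as the purely atomic measure carried by the common discontinuities with masses $\Delta_{s_i}(u)\Delta_{s_i}(v)$. The only blemish is the side bound $\sum_i|\Delta_{s_i}(u)\Delta_{s_i}(v)|\le\|v\|_\infty\|du\|$, which should carry a factor $2$ since $|\Delta_{s_i}(v)|\le 2\|v\|_\infty$; this does not affect the finiteness claim or the argument.
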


\begin{proof}
The first two formulas follow immediately from \eqref{eq:intbyparts}
and the definition of $d(uv)$.
For the third, we just note that the measure $(v-v_-)du$ is
supported on the set of points where $u$ and $v$ are both discontinuous,
and for each such point $s_i$,
\begin{align*}
(v(s_i)-v_-(s_i))du(\{s_i\}) &= (v(s_i) - v(s_i-)) (u(s_i) - u(s_i-)) \\
&= \Delta_{s_i}(u)\Delta_{s_i}(v).
\end{align*}
\end{proof}

We will be interested primarily in vector-valued functions. We
let $\NBV([0,T];\R^n)$ denote the set of functions $\bv \colon
[0,T]\to \R^n$ each of whose component functions is in
$\NBV([0,T];\R)$, and $\NBV([0,T];M)\subset \NBV([0,T];\R^n)$
the subset of functions taking their values in $M$. The
considerations above apply equally well to such vector-valued
functions, with obvious trivial modifications in notation. For
example, if $\bv ,\bw \in \NBV([0,T];\R^n)$, we consider $d\bv$
and $d\bw$ as $\R^n$-valued measures, and Lemma
\ref{lemma:byparts} implies that
\begin{displaymath}
\int_{(a,b]} \<\bv ,d\bw \> + \int_{(a,b]} \<\bw _-, d\bv \> =
\<\bv (b),\bw (b)\> -  \<\bv (a),\bw (a)\>.
\end{displaymath}

If $\gamma \in  \NBV([0,T];M)$, we say a function $\bv \in
\NBV([0,T];\R^n)$ is a {\it vector field along $\gamma $} if
$\bv (t)\in \tngt _{\gamma (t)}M$ for each $t\in [0,T]$. This
is equivalent to the equation $\<\bv (t),\n (\gamma (t))\>=0$
for all $t$, or more succinctly $\<\bv ,\n \circ \gamma
\>\equiv 0$. Note that the fact that $\gamma $ takes its values
in a bounded set, on which $\n $ is uniformly Lipschitz,
guarantees that $\n \circ \gamma \in \NBV([0,T];\R^n)$.

We generalize the notion of covariant derivative for $\NBV$ vector fields
by defining
\begin{displaymath}
\cD \bv  = d\bv  + \< \bv _-, d(\n \circ \gamma )\>\n \circ
\gamma .
\end{displaymath}
One motivation for this definition is provided by the following
lemma, which says that if $\bv (0)$ is tangent to $M$ and $\cD
\bv $ is tangent to $M$ on all of $[0,T]$,  then $\bv $ stays
tangent to $M$.

\begin{lemma}\label{lemma:Y-stays-tangent}
Suppose $\gamma \in  \NBV([0,T];M)$ and $\bv \in
\NBV([0,T];\R^n)$. If $\bv (0)\in \tngt _{\gamma (0)}M$ and
$\<\cD \bv ,\n \circ \gamma \> \equiv 0$, then $\bv (t)\in
\tngt _{\gamma (t)}M$ for all $t\in [0,T]$.
\end{lemma}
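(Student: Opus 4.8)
The plan is to reduce the claim to the elementary one‑dimensional fact that an $\NBV$ function whose associated signed measure vanishes is constant, and to verify that $t\mapsto\langle\bv(t),\n(\gamma(t))\rangle$ is such a function.

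First I would set $f(t):=\langle\bv(t),\n(\gamma(t))\rangle$. Since $\n\circ\gamma\in\NBV([0,T];\R^n)$ (already noted above, because $\gamma$ has bounded image on which $\n$ is uniformly Lipschitz) and $\NBV([0,T];\R)$ is closed under pointwise sums and products, each component product $v_i\,(\n\circ\gamma)_i$ lies in $\NBV([0,T];\R)$, hence so does $f$. The hypothesis $\bv(0)\in\tngt_{\gamma(0)}M$ says precisely that $f(0)=0$. So it suffices to prove that the measure $df$ is identically zero; then the fundamental theorem of calculus gives $f(t)=f(0)+\int_{(0,t]}df=0$ for every $t\in[0,T]$, which is the assertion.

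To compute $df$, I would apply the product rule of Lemma \ref{lemma:productrule} in the form $d(uv)=u_-\,dv+v\,du$ to each product $v_i\,(\n\circ\gamma)_i$ and sum, obtaining
\[
df=\langle\bv_-,d(\n\circ\gamma)\rangle+\langle\n\circ\gamma,d\bv\rangle .
\]
Then, using the definition $\cD\bv=d\bv+\langle\bv_-,d(\n\circ\gamma)\rangle\,\n\circ\gamma$, I would substitute $d\bv=\cD\bv-\langle\bv_-,d(\n\circ\gamma)\rangle\,\n\circ\gamma$ into the last term. Here $\langle\bv_-,d(\n\circ\gamma)\rangle$ is a scalar measure multiplying the vector function $\n\circ\gamma$; pairing the resulting vector‑valued measure against $\n\circ\gamma$ and using that $|\n(\gamma(t))|^2\equiv1$ (since $\n$ is a unit normal field) returns exactly the scalar measure $\langle\bv_-,d(\n\circ\gamma)\rangle$. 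The two such terms therefore cancel and
\[
df=\langle\n\circ\gamma,\cD\bv\rangle ,
\]
which is the zero measure by the hypothesis $\langle\cD\bv,\n\circ\gamma\rangle\equiv0$. Hence $f$ is constant, equal to $f(0)=0$, i.e.\ $\bv(t)\in\tngt_{\gamma(t)}M$ for all $t$.

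I do not expect a genuine obstacle; the care needed is purely bookkeeping: keeping the left‑limit subscript $(\cdot)_-$ on the factors dictated by the product rule and by the definition of $\cD$, and checking that the cancellation built on $|\n\circ\gamma|^2\equiv1$ is legitimate as an identity of measures — which it is, precisely because $|\n\circ\gamma|^2$ is the constant function $1$, not merely constant off a null set. If one preferred to avoid manipulating products of functions and measures, the same computation can be run inside the integral $\int_{(0,t]}$ by invoking the integration‑by‑parts identity of Lemma \ref{lemma:byparts} directly, which is perhaps the most transparent way to present the final write‑up.
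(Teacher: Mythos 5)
Your proposal is correct and follows essentially the same route as the paper: both compute, via the product rule of Lemma \ref{lemma:productrule}, that $d\<\bv,\n\circ\gamma\> = \<\n\circ\gamma,d\bv\> + \<\bv_-,d(\n\circ\gamma)\>$, use $\<\n\circ\gamma,\n\circ\gamma\>\equiv 1$ to cancel the $\<\bv_-,d(\n\circ\gamma)\>$ terms against the normal component of $\cD\bv$, and conclude $d\<\bv,\n\circ\gamma\> = \<\cD\bv,\n\circ\gamma\> = 0$, then integrate from the initial condition. The only difference is direction of presentation (you compute $df$ and identify it with $\<\cD\bv,\n\circ\gamma\>$, while the paper expands $\<\cD\bv,\n\circ\gamma\>$ and identifies it with $df$), which is immaterial.
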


\begin{proof}
Using Lemma \ref{lemma:productrule}, we compute
\begin{align*}
0 &= \<\cD \bv ,\n \circ \gamma \>\\
&= \< d\bv ,\n \circ \gamma \> + \bigl\< \< \bv _-, d(\n \circ \gamma )
 \>\n \circ \gamma, \n \circ \gamma\bigr\>\\
&= d\<\bv ,\n \circ \gamma \> - \< \bv _-, d(\n \circ \gamma )\>
+ \< \bv _-, d(\n \circ \gamma ) \>
\<\n \circ \gamma, \n \circ \gamma\>\\
&= d\<\bv ,\n \circ \gamma \>.
\end{align*}
Thus if $\<\bv (0),\n (\gamma (0))\>=0$, we find by integration
that $\<\bv (t),\n (\gamma (t))\>=0$ for all $t$.
\end{proof}

\subsection{An Existence and Uniqueness Theorem}

The main purpose of this
section
 is to prove the following theorem.

\begin{theorem}\label{thm:existence-uniqueness}
Let $M\subset \R^n$ be a smooth, properly embedded
hypersurface, and let $\gamma \in \NBV([0,T];M)$. For any $\bv
_0\in \tngt _{\gamma (0)}M$, there exists a unique $\NBV$
vector field $\bv $ along $\gamma $ that is a solution to the
following (measure-valued) ODE initial-value problem:
\begin{equation}\label{eq:ODE}
\begin{aligned}
\cD \bv  &= (\sh \circ \gamma ) \bv \, dt,\\
\bv (0) &= \bv _0.
\end{aligned}
\end{equation}
\end{theorem}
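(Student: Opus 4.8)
The plan is to convert \eqref{eq:ODE} into an equivalent linear Volterra integral equation driven by a finite measure, and to solve that equation by a Picard iteration engineered to absorb the atoms of the measure. Expanding $\cD\bv = d\bv + \<\bv_-, d(\n\circ\gamma)\>\,\n\circ\gamma$ and integrating \eqref{eq:ODE} over $(0,t]$ (the initial value problem being read, as usual, through such integrals, and Lebesgue measure having no atoms so that $\bv$ may be replaced by $\bv_-$ in the drift term), one checks that a function $\bv\in\NBV([0,T];\R^n)$ solves \eqref{eq:ODE} if and only if it satisfies
\begin{equation}\label{eq:volt}
\bv(t) = \bv_0 + \int_{(0,t]}\sh(\gamma(s))\,\bv(s-)\,ds - \int_{(0,t]}\<\bv(s-),\,d(\n\circ\gamma)(s)\>\,\n(\gamma(s)).
\end{equation}
The tangency constraint need not be imposed while solving \eqref{eq:volt}: since $\sh(x)$ is symmetric with $\sh(x)\n(x)=0$, we have $\<\sh(x)\bz,\n(x)\>=\<\bz,\sh(x)\n(x)\>=0$ for all $\bz\in\R^n$, so $\<\cD\bv,\n\circ\gamma\>\equiv0$ for any solution of \eqref{eq:ODE}; hence, once a solution with $\bv(0)=\bv_0\in\tngt_{\gamma(0)}M$ is found, Lemma~\ref{lemma:Y-stays-tangent} automatically makes it a vector field along $\gamma$. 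So it suffices to prove that \eqref{eq:volt} has a unique solution in $\NBV([0,T];\R^n)$.

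For the setup, note that $\gamma\in\NBV([0,T];M)$ has bounded image, so $\gamma([0,T])\subset M_R$ for some $R$; let $K$ be the corresponding constant from Lemma~\ref{lemma:globalK}. By \eqref{eq:N-Lip-est} the $\R^n$-valued measure $d(\n\circ\gamma)$ has finite total-variation measure $\nu:=|d(\n\circ\gamma)|$; set $\mu:=K\,dt+\nu$ and $m(t):=\mu((0,t])$, so $m$ is finite, nondecreasing, right-continuous, $m(0)=0$, and $dm=\mu$. Let $\Phi$ be the operator given by the right-hand side of \eqref{eq:volt}. Using $\|\sh(\gamma(s))\|\le K$ (by \eqref{eq:S-norm-est}) and $|\n|\equiv1$ one obtains, for all $\bv,\bw$,
\begin{equation*}
\bigl|\Phi\bv(t)-\Phi\bw(t)\bigr|\ \le\ \int_{(0,t]}\bigl|\bv(s-)-\bw(s-)\bigr|\,dm(s)\ \le\ m(T)\,\|\bv-\bw\|_\infty ,
\end{equation*}
and likewise $\Phi\bv$ has finite total variation (bounded by a constant times $\|\bv\|_\infty$) and is right-continuous because $\mu$ is a finite measure. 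Thus $\Phi$ maps the Banach space of \cadlag\ $\R^n$-valued functions on $[0,T]$, which is complete for $\|\cdot\|_\infty$, into $\NBV([0,T];\R^n)$; in particular every fixed point of $\Phi$ lies automatically in $\NBV$, which sidesteps the non-completeness of $\NBV$ under the sup norm.

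Finally, run the iteration $\psi_0\equiv\bv_0$, $\psi_{k+1}=\Phi\psi_k$. The crux is the a priori bound $\bigl|\psi_k(t)-\psi_{k-1}(t)\bigr|\le|\bv_0|\,m(t)^k/k!$ for all $k\ge1$, proved by induction; the base case is immediate, and the inductive step reduces to
\begin{equation*}
\int_{(0,t]} m(s-)^{k}\,dm(s)\ \le\ \frac{m(t)^{k+1}}{k+1},
\end{equation*}
which is the change-of-variables inequality $\int_{(0,t]}\phi'(m(s-))\,dm(s)\le\phi(m(t))-\phi(m(0))$ — valid for convex increasing $\phi$ and nondecreasing right-continuous $m$ — applied to $\phi(x)=x^{k+1}/(k+1)$ together with $m(0)=0$. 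This is exactly where the atoms of $d(\n\circ\gamma)$ are tamed, and I expect it to be the main technical point: it is the \emph{left} limits built into $\cD\bv$ — hence the appearance of $m(s-)$ rather than $m(s)$ in the integrand — that make jumps harmless, both here and when \eqref{eq:volt} is solved across a jump. Summability of $\sum_k|\bv_0|\,m(T)^k/k!$ gives uniform convergence $\psi_k\to\bv$ to a \cadlag\ limit, and since $\Phi$ is $\|\cdot\|_\infty$-Lipschitz, passing to the limit in $\psi_{k+1}=\Phi\psi_k$ yields $\bv=\Phi\bv$; hence $\bv\in\NBV([0,T];\R^n)$ solves \eqref{eq:volt}. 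For uniqueness, two solutions give $h:=|\bv-\bw|$ with $h(t)\le\int_{(0,t]}h(s-)\,dm(s)$, and iterating via the displayed inequality gives $h(t)\le\|h\|_\infty\,m(t)^k/k!\to0$, so $\bv\equiv\bw$. The tangency remark from the first paragraph then finishes the proof.
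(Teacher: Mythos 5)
Your proof is correct, but it follows a genuinely different route from the paper. You recast \eqref{eq:ODE} as a Volterra--Stieltjes equation driven by the finite measure $dm = K\,dt + |d(\n\circ\gamma)|$ and solve it by Picard iteration, the key point being the inequality $\int_{(0,t]} m(s-)^k\,dm(s)\le m(t)^{k+1}/(k+1)$ (a Dol\'eans-Dade-type argument; the inequality itself follows, e.g., by induction from the integration-by-parts formula of Lemma \ref{lemma:byparts}, using $m_-\le m$), and you correctly observe that the left limits in $\cD\bv$ are exactly what tames the atoms, that tangency is automatic via Lemma \ref{lemma:Y-stays-tangent} because $\sh(x)$ maps into $\tngt_xM$, and that the non-completeness of $\NBV$ under the sup norm is bypassed since any fixed point of your operator is automatically $\NBV$. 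The paper instead proves uniqueness by a Gronwall-type energy estimate (differentiating $e^{-2Kt}|\bv(t)|^2$, which uses tangency of solutions), and proves existence by approximating $\gamma$ uniformly by piecewise-constant ("finite") trajectories, for which the solution is the explicit product $e^{l_k\sh_{x_k}}\pi_{x_k}\cdots e^{l_0\sh_{x_0}}\bv_0$; the comparison estimate of Lemma \ref{lemma:finite-Y-est} makes these solutions uniformly Cauchy, and the limit is shown to solve \eqref{eq:ODE} in a weak formulation tested against $\bw\in\NBV$ split into tangential and normal parts. Your approach is shorter and more self-contained for the bare existence-uniqueness statement, gives the bound $|\bv(t)|\le e^{m(t)}|\bv_0|$ directly, and needs no approximation machinery; what it does not deliver is precisely what the paper's construction is designed to produce, namely the product-formula representation and the quantitative comparison estimates for piecewise-constant approximations (Lemmas \ref{lemma:finite-Y-est}--\ref{lemma:dS-estimate}), which drive the Skorokhod-metric stability theorem and the excursion-theoretic semi-discrete approximations $\A_{r,\eps}$ of Section \ref{section:diff}. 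One cosmetic remark: like the paper, you should read the measure identity on $(0,T]$ with the initial condition imposed separately, since under the convention $\bv(0-)=0$ the measure $\cD\bv$ has an atom $\bv_0$ at $t=0$ that the right-hand side cannot match; your formulation via integrals over $(0,t]$ already does this.
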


Before proving the theorem, we will establish some important preliminary results.
We begin by dispensing with the uniqueness question.

\begin{lemma}
Let $\gamma \in \NBV([0,T];M)$. If $\bv ,\tw \bv \in
\NBV([0,T];\R^n)$ are both solutions to \eqref{eq:ODE} with the
same initial condition, they are equal.
\end{lemma}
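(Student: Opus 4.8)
The plan is to subtract the two solutions and reduce to a Gronwall-type inequality driven by a finite measure. Put $\bw = \bv - \tw\bv$. Since both $\cD$ and the right-hand side of \eqref{eq:ODE} depend linearly on the vector field, $\bw$ is an $\NBV$ solution of the homogeneous problem $\cD\bw = (\sh\circ\gamma)\bw\,dt$ with $\bw(0)=0$, and the goal is to show $\bw\equiv 0$. (No tangency of $\bw$ to $M$ is needed, so Lemma~\ref{lemma:Y-stays-tangent} plays no role here.)

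First I would unwind the definition of $\cD$ to get $d\bw = (\sh\circ\gamma)\bw\,dt - \<\bw_-, d(\n\circ\gamma)\>\,\n\circ\gamma$, and integrate this over $(0,t]$, using the fundamental-theorem-of-calculus conventions of the preceding subsection together with $\bw(0)=0$, to obtain for each $t\in[0,T]$
\begin{displaymath}
\bw(t) = \int_{(0,t]}(\sh\circ\gamma)\bw\,ds \;-\; \int_{(0,t]}\<\bw_-, d(\n\circ\gamma)\>\,\n\circ\gamma .
\end{displaymath}
Because $\gamma$ takes its values in some $M_R$, estimate \eqref{eq:S-norm-est} gives $\|\sh(\gamma(s))\|\le K$; since the first integrand is absolutely continuous in $s$, its contribution is at most $K\int_{(0,t]}|\bw(s-)|\,ds$ (replacing $\bw$ by $\bw_-$ changes this integrand only on a Lebesgue-null set). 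Since $\n\circ\gamma\in\NBV$, the total-variation measure $|d(\n\circ\gamma)|$ is finite, and as $|\n(\gamma(s))|=1$ the norm of the second integral is at most $\int_{(0,t]}|\bw(s-)|\,|d(\n\circ\gamma)|(s)$. With the finite positive Borel measure $\mu := K\,ds + |d(\n\circ\gamma)|$ on $[0,T]$, this yields
\begin{displaymath}
|\bw(t)| \;\le\; \int_{(0,t]}|\bw(s-)|\,d\mu(s), \qquad t\in[0,T].
\end{displaymath}

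To conclude, I would iterate. Set $A := \sup_{t\in[0,T]}|\bw(t)|$, which is finite because $\NBV$ functions are bounded. Using that $\bw(s-) = \lim_{u\uparrow s}\bw(u)$ (so that the atom $\mu(\{s\})$ is automatically discarded at each step) together with the elementary bound $\int_{(0,t]}\mu((0,s))^{n}\,d\mu(s)\le \frac{1}{n+1}\,\mu((0,t])^{n+1}$ --- valid since, inside $(0,t]^{n+1}$, the region where the last coordinate strictly exceeds the other $n$ has $\mu^{\otimes(n+1)}$-measure at most $\frac{1}{n+1}\mu((0,t])^{n+1}$ --- an induction on $n$ gives $|\bw(t)|\le A\,\mu((0,t])^{n}/n!$ for all $n$, and letting $n\to\infty$ forces $\bw(t)=0$ for every $t$.

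The only genuinely delicate point is the bookkeeping around the atoms of $\mu$: one must keep the left limit $\bw_-$ under the integral sign throughout, since otherwise an atom of $|d(\n\circ\gamma)|$ would contribute a multiplicative factor $1+\mu(\{s\})$ instead of being absorbed; with that precaution the iteration runs exactly as in the atom-free case. (Alternatively one could invoke a known Gronwall inequality for Stieltjes measures, but the hands-on argument above is self-contained.) The remaining estimates are routine consequences of Lemma~\ref{lemma:globalK} and the conventions of the Analytic Preliminaries subsection.
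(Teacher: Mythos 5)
Your argument is correct, but it is a genuinely different proof from the one in the paper. You subtract the two solutions and run a Gronwall iteration for the Stieltjes measure $\mu = K\,ds + |d(\n\circ\gamma)|$, and the delicate points are handled properly: keeping the left limit $\bw(s-)$ under the integral is exactly what lets you work with $\mu((0,s))$ (open at $s$), and your symmetry argument for $\int_{(0,t]}\mu((0,s))^{n}\,d\mu(s)\le \mu((0,t])^{n+1}/(n+1)$ is the correct way to make the iteration survive atoms of $d(\n\circ\gamma)$; the naive bound with $\mu((0,s])$ would fail. The paper instead first invokes Lemma \ref{lemma:Y-stays-tangent} to conclude that any solution is tangent to $M$, and then applies Lemma \ref{lemma:productrule} to the weighted energy $f(t)=e^{-2Kt}|\bv(t)|^2$, showing $df$ is a nonpositive measure: tangency kills the term $\<\bv,\n\circ\gamma\>\<\bv_-,d(\n\circ\gamma)\>$ outright, \eqref{eq:S-norm-est} dominates the shape-operator term, and the jump terms $-|\Delta_{s_i}\bv|^2\delta_{s_i}$ come with a favorable sign. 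The trade-off: your route is more elementary and does not use the geometric cancellation at all (only $|\n|=1$, \eqref{eq:S-norm-est}, and finiteness of $\|d(\n\circ\gamma)\|$), but any quantitative bound it produces degrades with the total variation of $\gamma$; the paper's Lyapunov argument yields, as a byproduct, the growth estimate $|\bv(t)|\le e^{Kt}|\bv_0|$ with a constant independent of $\|d\gamma\|$, which is in the spirit of \eqref{eq:Y(t)-est} and of the variation-free control the authors care about later. For the uniqueness statement itself, both arguments are complete.
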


\begin{proof}
Suppose $\bv $ is any solution to \eqref{eq:ODE}. Observe that
Lemma \ref{lemma:Y-stays-tangent} implies that $\bv (t)$ is
tangent to $M$ for all $t$, so $\<\bv ,\n \circ \gamma \>\equiv
0$. Let $R = \|\gamma \|_\infty$, so that $\gamma $ takes its
values in $M_R$. With $K$ chosen as in Lemma
\ref{lemma:globalK}, define $f\in \NBV([0,T];\R)$ by $f(t) =
e^{-2Kt} |\bv (t)|^2$. Then Lemma \ref{lemma:productrule}
yields
\begin{align*}
df &= e^{-2Kt} \biggl( -2K|\bv |^2 dt + 2\< \bv , d\bv \> -\sum_i \<\Delta_{s_i}\bv ,\Delta_{s_i}\bv \>\delta_{s_i}\biggr)\\
&= e^{-2Kt} \biggl( -2K|\bv |^2 dt -
 2\<\bv ,\n \circ \gamma \> \< \bv _-, d(\n \circ \gamma )\>\\
&\qquad\qquad + 2\< \bv , (\sh \circ \gamma ) \bv \>dt -\sum_i \<\Delta_{s_i}\bv ,\Delta_{s_i}\bv \>\delta_{s_i}\biggr)\\
&= e^{-2Kt} \biggl( 2\bigl( \< \bv , (\sh \circ \gamma ) \bv \> - K|\bv |^2\bigr)dt
-\sum_i |\Delta_{s_i}\bv |^2\delta_{s_i}\biggr).
\end{align*}
Since \eqref{eq:S-norm-est} shows that $\< \bv , (\sh \circ
\gamma ) \bv \> \le K|\bv |^2$, this last  expression is a
nonpositive measure on $[0,T]$.  Integrating, we conclude that
$f(t)\le f(0)$, or
\begin{displaymath}
|\bv (t)|^2\le e^{2Kt}|\bv _0|^2.
\end{displaymath}
In particular, the only solution with initial condition $\bv
_0=0$ is the zero solution. Because \eqref{eq:ODE} is linear in
$\bv $, this suffices.
\end{proof}

To prove existence, we will work first with finite
approximations. Define a {\it finite trajectory} in $M$ to be a
function $\gamma \in\NBV([0,T];M)$ that takes on only finitely
many values. This means that there exists a partition $\{0=t_0
< t_1 < \dots < t_m =T \}$ of $[0,T]$ such that $\gamma $ is
constant on $[t_{i},t_{i+1})$ for each $i$. For such a
function, $d\gamma  = \sum_{i=0}^m \Delta_{t_i}(\gamma
)\delta_{t_i}$ and $\|d\gamma \| = \sum_{i=0}^m
|\Delta_{t_i}(\gamma )|$.

Suppose $\gamma $ is a finite trajectory in $M$ and $\bv _0\in
\tngt _{\gamma (0)}M$. Let $0=t_0<\dots<t_m=T$ be  a finite
partition of $[0,T]$ including all of the discontinuities of
$\gamma $, and write $x_i = \gamma (t_i)$. Define  $\bv \colon
[0,T]\to \R^n$ by
\begin{equation}\label{eq:finite-Y}
\bv (t) =
e^{(t-t_k)\sh _{x_k}} \pi_{x_k}
e^{(t_k-t_{k-1})\sh _{x_{k-1}}} \pi_{x_{k-1}}
\cdots
e^{(t_2-t_1) \sh _{x_1}} \pi_{x_1} e^{(t_1-t_0) \sh _{x_0}} \bv _0,
\end{equation}
where $k$ is the largest index such that $t_k\le t$. Observe
that the definition of $\bv $ is unchanged if we insert more
times $t_i$ in the partition.

\begin{lemma}\label{lemma:finite-Y}
Let $\gamma \colon[0,T]\to M$ be a finite trajectory. For and
any $\bv _0\in \tngt _{\gamma (0)}M$, the map $\bv $ defined by
\eqref{eq:finite-Y} is the unique solution to \eqref{eq:ODE},
and satisfies
\begin{align}
|\bv (t)| &\le e^{Ct}|\bv _0|,\label{eq:Y(t)-est}\\
\|d\bv \| &\le C,\label{eq:dY-est}
\end{align}
where $C$ is a constant depending only on $M$, $T$, and
$\|d\gamma \|$.
\end{lemma}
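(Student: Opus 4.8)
The plan is to rewrite the explicit product \eqref{eq:finite-Y} in recursive form, verify directly that the resulting function solves the measure-valued equation \eqref{eq:ODE}, and then read the two bounds off from Lemma~\ref{lemma:globalK}; uniqueness has already been settled in the uniqueness lemma above, so only existence and the estimates remain. First I would introduce the ``snapshots'' $\bw_0 = \bv_0$ and $\bw_i = \pi_{x_i}e^{(t_i-t_{i-1})\sh_{x_{i-1}}}\bw_{i-1}$ for $1\le i\le m$, so that unwinding \eqref{eq:finite-Y} gives $\bv(t_i) = \bw_i$ and
\begin{displaymath}
\bv(t) = e^{(t-t_{i-1})\sh_{x_{i-1}}}\bw_{i-1}, \qquad t_{i-1}\le t< t_i .
\end{displaymath}
From this description one reads off immediately that $\bv$ is real-analytic on each subinterval $[t_{i-1},t_i)$, is right-continuous, and has only the finitely many possible jumps at $t_1,\dots,t_m$, so $\bv\in\NBV([0,T];\R^n)$. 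Moreover, since $\sh_{x_{i-1}}$ maps $\tngt_{x_{i-1}}M$ into itself and $\sh_{x_{i-1}}\n(x_{i-1})=0$, the exponential $e^{s\sh_{x_{i-1}}}$ preserves $\tngt_{x_{i-1}}M$; combined with $\pi_{x_i}(\R^n)\subset\tngt_{x_i}M$ and $\bv_0\in\tngt_{x_0}M$, an induction shows $\bw_i\in\tngt_{x_i}M$ for every $i$. Because $\gamma\equiv x_{i-1}$ on $[t_{i-1},t_i)$, this means $\bv$ is a vector field along $\gamma$; in particular $\bv(t_i-) = e^{(t_i-t_{i-1})\sh_{x_{i-1}}}\bw_{i-1}$ lies in $\tngt_{x_{i-1}}M$, so $\<\bv(t_i-),\n(x_{i-1})\> = 0$. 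That orthogonality relation is the one fact on which the whole computation hinges.

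Next I would check that $\bv$ solves \eqref{eq:ODE}. On the interior of each $[t_{i-1},t_i)$ we have $\bv'(t) = \sh_{x_{i-1}}\bv(t) = \sh_{\gamma(t)}\bv(t)$ and $\n\circ\gamma$ is constant, so there the absolutely continuous part of $\cD\bv = d\bv + \<\bv_-,d(\n\circ\gamma)\>\,\n\circ\gamma$ is exactly $(\sh\circ\gamma)\bv\,dt$ and there is no other contribution away from the partition points. It then suffices to see that the atoms of $d\bv$ and of $\<\bv_-,d(\n\circ\gamma)\>\,\n\circ\gamma$ cancel at each $t_i$ with $1\le i\le m$. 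By \eqref{eq:pi} the jump of $\bv$ at $t_i$ is $\bv(t_i)-\bv(t_i-) = \pi_{x_i}\bv(t_i-)-\bv(t_i-) = -\<\bv(t_i-),\n(x_i)\>\n(x_i)$, while the atom of $\<\bv_-,d(\n\circ\gamma)\>\,\n\circ\gamma$ at $t_i$ is $\<\bv(t_i-),\n(x_i)-\n(x_{i-1})\>\n(x_i)$, which equals $\<\bv(t_i-),\n(x_i)\>\n(x_i)$ by the orthogonality relation above. These two atoms are negatives of one another, so $\cD\bv = (\sh\circ\gamma)\bv\,dt$; together with $\bv(0) = \bw_0 = \bv_0$ (the remaining jump of $\bv$, at $t_0=0$, being just the initial datum), this shows $\bv$ is a solution, and uniqueness makes it the solution.

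For the estimates I would take $R = \|\gamma\|_\infty$, which is at most $\|d\gamma\|$, so that the constant $K$ furnished by Lemma~\ref{lemma:globalK} for this $R$ and $T$ depends only on $M$, $T$, and $\|d\gamma\|$. Estimate \eqref{eq:Y(t)-est} then comes straight out of \eqref{eq:finite-Y}: applying $\|\pi_{x_j}\|\le1$ and \eqref{eq:e^S-est} to each factor and telescoping the exponents gives $|\bv(t)|\le e^{K(t-t_0)}|\bv_0| = e^{Kt}|\bv_0|$. For \eqref{eq:dY-est} I would split $\|d\bv\| = \int_0^T|\bv'(t)|\,dt + \sum_i|\Delta_{t_i}\bv|$. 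The first term is controlled by \eqref{eq:S-norm-est} and \eqref{eq:Y(t)-est}: $|\bv'(t)| = |\sh_{\gamma(t)}\bv(t)|\le Ke^{Kt}|\bv_0|$, so $\int_0^T|\bv'|\,dt\le e^{KT}|\bv_0|$. For the second term the orthogonality relation again gives $|\Delta_{t_i}\bv| = |\<\bv(t_i-),\n(x_i)\>| = |\<\bv(t_i-),\n(x_i)-\n(x_{i-1})\>|$, which by \eqref{eq:N-Lip-est} and \eqref{eq:Y(t)-est} is at most $Ke^{KT}|\bv_0|\,|x_i-x_{i-1}|$; since $|x_i-x_{i-1}| = |\Delta_{t_i}\gamma|$, summing over $i$ gives $\sum_i|\Delta_{t_i}\bv|\le Ke^{KT}|\bv_0|\,\|d\gamma\|$. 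Adding the two bounds yields $\|d\bv\|\le C|\bv_0|$ with $C$ depending only on $M$, $T$, and $\|d\gamma\|$, which is \eqref{eq:dY-est}.

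The only step that needs genuine care is the atom cancellation at the partition points: one has to keep the definition of $\cD$ and the left limits $\bv(t_i-)$, $(\n\circ\gamma)(t_i-)$ straight, and the reason it works is precisely that $\bv$ is forced to stay tangent to $M$, so that the ``discontinuity correction'' $\<\bv(t_i-),\n(x_i)-\n(x_{i-1})\>$ built into $\cD\bv$ collapses to $\<\bv(t_i-),\n(x_i)\>$ and exactly cancels the jump $\bv(t_i)-\bv(t_i-) = -\<\bv(t_i-),\n(x_i)\>\n(x_i)$ produced by the projection. Once that is seen, everything else is routine estimation from Lemma~\ref{lemma:globalK}.
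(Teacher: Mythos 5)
Your proof is correct and takes essentially the same route as the paper: you verify that the explicit product \eqref{eq:finite-Y} solves \eqref{eq:ODE} by checking the absolutely continuous part on each subinterval and the cancellation of the atoms of $d\bv$ and $\<\bv_-,d(\n\circ\gamma)\>\,\n\circ\gamma$ at the jump times (using that $\bv(t_i-)$ is tangent at $\gamma(t_i-)$), and then read both estimates off Lemma~\ref{lemma:globalK} exactly as the paper does. The only difference is cosmetic: your bound $\|d\bv\|\le C|\bv_0|$ carries the homogeneous factor $|\bv_0|$ that the paper's statement of \eqref{eq:dY-est} suppresses, which is not a discrepancy of substance.
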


\begin{proof}
An easy computation shows that
\begin{align*}
d\bv  &= (\sh \circ \gamma )\bv \, dt
+ \sum_{i=0}^m \left(
\pi_{x_i}
\bv (t_i-) - \bv (t_i-)\right) \delta_{t_i}\\
&= (\sh \circ \gamma )\bv \, dt + \sum_{i=0}^m
\bigl\<\bv (t_i-), \n (\gamma (t_i)) - \n (\gamma (t_i-))\bigr\>
\n (\gamma (t_i))
\delta_{t_i},
\end{align*}
from which it follows that $\bv $ solves \eqref{eq:ODE}.

To estimate $|\bv (t)|$, observe first that the operator norm
of each projection $\pi_{x}$ is equal to one. Let  $K$ be the
constant of Lemma \ref{lemma:globalK} for $R = \|d\gamma \|$.
Using \eqref{eq:e^S-est}, we have the following operator norm
estimate for any finite collection of points $x_1,\dots,x_j\in
M_R$ and real numbers $l_1,\dots,l_j\in [0,T]$:
\begin{equation}\label{eq:comp-est}
\|e^{l_{j} \sh _{x_j}} \circ\pi_{x_j}\circ
\cdots \circ e^{l_{1}\sh _{x_1}}\circ \pi_{x_1}\|
\le e^{Kl_{j}}\cdots e^{Kl_{1}}
= e^{K(l_{j}+\cdots+l_{1})}.
\end{equation}
Applying this to the definition of $\bv $ proves
\eqref{eq:Y(t)-est}.  Then, using \eqref{eq:Y(t)-est} and
\eqref{eq:N-Lip-est}, we estimate
\begin{align*}
\|d\bv \| &
=  \int_{[0,T]} |(\sh \circ \gamma )\bv |\,dt
+  \sum_{i=0}^m \biggl |\bigl\<\bv (t_i-),
\n (\gamma (t_i)) - \n (\gamma (t_i-))\bigr\>
\n (\gamma (t_i))
\biggr|\\
&\le \int_{[0,T]} K e^{Kt}dt +
 \sum_{i=0}^m e^{KT} K|\gamma (t_i) - \gamma (t_i-)|\\
&\le C(1+ \|d\gamma \|).
\end{align*}
\end{proof}

\begin{lemma}\label{lemma:finite-Y-est}
Suppose $\gamma $ and $\tw \gamma $ are any finite trajectories
in $M$ defined on $[0,T]$ and starting at the same point, and
$\bv $, $\tw \bv $ are the corresponding solutions to
\eqref{eq:ODE}. There is a constant $C$ depending only on $M$,
$T$, $\|\gamma \|_\infty$, and $\|\tw \gamma \|_\infty$ such
that the following estimate holds:
\begin{displaymath}
\|\bv  - \tw \bv \|_{\infty} \le C \left( 1 + \|d\gamma \| +
\|d\tw \gamma \|\right)\|\gamma -\tw \gamma \|_{\infty}|\bv
_0|.
\end{displaymath}
\end{lemma}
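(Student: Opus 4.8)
\emph{Reductions.} Set $\epsilon := \|\gamma - \tw\gamma\|_\infty$ and $R := \max(\|\gamma\|_\infty,\|\tw\gamma\|_\infty)$, so both curves take values in $M_R$; let $K$ be the constant of Lemma~\ref{lemma:globalK} for this $R$ and $T$, so that $\|\pi_x\|=1$, $\|e^{l\sh_x}\|\le e^{Kl}$, and hence $|\bv(t)|,|\tw\bv(t)|\le e^{KT}|\bv_0|$ by \eqref{eq:comp-est} applied to \eqref{eq:finite-Y}. Since the asserted inequality is uniform in $t$, I would fix $t\in[0,T]$, choose a partition $0=t_0<\dots<t_m=T$ containing $t$ together with all the finitely many discontinuities of $\gamma$ and of $\tw\gamma$ — so each is constant on every $[t_{i-1},t_i)$ — and write $x_i=\gamma(t_i)$, $\tw x_i=\tw\gamma(t_i)$, $\ell_i=t_i-t_{i-1}$, noting $x_0=\tw x_0$ and $t=t_k$ for some $k$. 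With $A_i:=\pi_{x_i}e^{\ell_i\sh_{x_{i-1}}}$ and $\tw A_i:=\pi_{\tw x_i}e^{\ell_i\sh_{\tw x_{i-1}}}$, formula \eqref{eq:finite-Y} together with $\pi_{x_0}\bv_0=\bv_0$ gives $\bv(t)=A_k\cdots A_1\bv_0$ and $\tw\bv(t)=\tw A_k\cdots\tw A_1\bv_0$, with $\|A_i\|,\|\tw A_i\|\le e^{K\ell_i}$, so every subproduct has norm $\le e^{KT}$.

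\emph{The key estimate.} I would first prove the following strengthening of Lemma~\ref{lemma:pipi-pipi}: there is $C=C(M,R)$ with
\[
\|\pi_z(\pi_y-\pi_x)\pi_w\|\le C\,|x-y|\bigl(|x-w|+|y-z|\bigr),\qquad w,x,y,z\in M_R.
\]
This is proved just like Lemma~\ref{lemma:pipi-pipi}: expanding $\pi_z=\pi_y+(\pi_z-\pi_y)$ and $\pi_w=\pi_x+(\pi_w-\pi_x)$, the cross term $\pi_y(\pi_y-\pi_x)\pi_x=\pi_y(\Id-\pi_x)\pi_x$ vanishes because $\pi_x^2=\pi_x$, while the other three terms are controlled using \eqref{eq:pi-est}, the identity $\langle\n(x),\n(y)\rangle=1-\tfrac12|\n(x)-\n(y)|^2$, and \eqref{eq:N-Lip-est}, which together yield $\|\pi_y(\Id-\pi_x)\|=|\pi_y\n(x)|\le C|x-y|$ and symmetrically $\|(\Id-\pi_y)\pi_x\|=|\pi_x\n(y)|\le C|x-y|$. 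The decisive point, missing from Lemma~\ref{lemma:pipi-pipi} as stated, is that $|x-y|$ — which in the application will be $|\tw x_j-x_j|\le\epsilon$ — is extracted \emph{cleanly}, leaving only one further ``small'' factor rather than a product of two jump-sized quantities.

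\emph{Telescoping.} Now telescope $\bv(t)-\tw\bv(t)=\sum_{j=1}^{k}(A_k\cdots A_{j+1})(A_j-\tw A_j)(\tw A_{j-1}\cdots\tw A_1)\bv_0$, and split $A_j-\tw A_j=(\pi_{x_j}-\pi_{\tw x_j})e^{\ell_j\sh_{x_{j-1}}}+\pi_{\tw x_j}(e^{\ell_j\sh_{x_{j-1}}}-e^{\ell_j\sh_{\tw x_{j-1}}})$. By \eqref{eq:e^S-est3} the second piece has norm $\le K\ell_j\epsilon$, so its total contribution is at most $e^{2KT}|\bv_0|\,\epsilon\sum_jK\ell_j\le KTe^{2KT}\epsilon|\bv_0|$. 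For the first piece and for $1\le j\le k-1$, the adjacent blocks $A_{j+1}=\pi_{x_{j+1}}e^{\ell_{j+1}\sh_{x_j}}$ and $\tw A_{j-1}=\pi_{\tw x_{j-1}}e^{\ell_{j-1}\sh_{\tw x_{j-2}}}$ (with $\pi_{\tw x_0}=\pi_{x_0}$ absorbed into $\bv_0$ when $j=1$) supply projections on both sides; replacing the three intervening exponentials by $\Id$ — at a cost of $\le C(\ell_{j-1}+\ell_j+\ell_{j+1})K\epsilon$ by \eqref{eq:e^S-est2} and $\|\pi_{x_j}-\pi_{\tw x_j}\|\le K\epsilon$ — reduces the middle to $\pi_{x_{j+1}}(\pi_{x_j}-\pi_{\tw x_j})\pi_{\tw x_{j-1}}$, and the key estimate gives
\[
\bigl\|\pi_{x_{j+1}}(\pi_{x_j}-\pi_{\tw x_j})\pi_{\tw x_{j-1}}\bigr\|\le C\,|x_j-\tw x_j|\bigl(|x_j-x_{j+1}|+|\tw x_j-\tw x_{j-1}|\bigr)\le C\epsilon\bigl(|\Delta_{t_{j+1}}(\gamma)|+|\Delta_{t_j}(\tw\gamma)|\bigr).
\]
Summing over $j$ and using $\sum_j|\Delta_{t_{j+1}}(\gamma)|\le\|d\gamma\|$, $\sum_j|\Delta_{t_j}(\tw\gamma)|\le\|d\tw\gamma\|$, $\sum_j\ell_j\le T$, these terms contribute at most $Ce^{2KT}|\bv_0|\,\epsilon(1+\|d\gamma\|+\|d\tw\gamma\|)$. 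The single remaining term $j=k$, which has no block to its left, is bounded crudely by $\|A_k-\tw A_k\|\,e^{KT}|\bv_0|\le(K\epsilon+K\ell_k\epsilon)e^{KT}|\bv_0|$. Adding everything up gives $|\bv(t)-\tw\bv(t)|\le C(1+\|d\gamma\|+\|d\tw\gamma\|)\epsilon|\bv_0|$ with $C=C(M,T,\|\gamma\|_\infty,\|\tw\gamma\|_\infty)$, and taking the supremum over $t$ finishes the proof.

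\emph{The obstacle.} The one delicate point is the key estimate. A direct application of Lemma~\ref{lemma:pipi-pipi} to $\pi_{x_{j+1}}(\pi_{x_j}-\pi_{\tw x_j})\pi_{\tw x_{j-1}}$ produces a term $|\tw x_{j-1}-x_j|\,|x_j-x_{j+1}|$ in which $|\tw x_{j-1}-x_j|$ is only $O(\epsilon+|\Delta_{t_j}(\gamma)|)$, so the sum over $j$ would contain uncontrolled products $|\Delta_{t_j}(\gamma)|\,|\Delta_{t_{j+1}}(\gamma)|$ of two jumps — and worse, terms carrying no factor of $\epsilon$ at all, which is incompatible with the fact that $\bv=\tw\bv$ when $\gamma=\tw\gamma$. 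The sharpened estimate is exactly what forces $\|\gamma-\tw\gamma\|_\infty$ out of every term while leaving a single jump, so that the residual sums telescope against $\|d\gamma\|$ and $\|d\tw\gamma\|$; arranging the partition to contain all discontinuities of both curves is what makes those jump sums available.
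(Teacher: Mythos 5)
Your proof is correct, and it follows the same overall strategy as the paper (finite partition containing all discontinuities of both trajectories, a telescoping sum, a two-term split of the middle difference, the exponential part controlled by \eqref{eq:e^S-est3} and summed against $T$, the projection part summed against $\|d\gamma\|+\|d\tw\gamma\|$), but it differs in the decomposition and in the key lemma. The paper groups the factors with an index mismatch, telescoping on the difference $e^{l_i\sh_i}\pi_i-\tw\pi_{i+1}e^{l_i\tw\sh_i}$ sandwiched between $\pi_{i+1}$ and $\tw\pi_i$; with that grouping the compared projections are $\pi_{x_i}$ and $\pi_{\tw x_{i+1}}$, and Lemma \ref{lemma:pipi-pipi} applies exactly as stated, since each of its two products already pairs one factor bounded by $\|\gamma-\tw\gamma\|_\infty$ ($|x_i-\tw x_i|$ or $|x_{i+1}-\tw x_{i+1}|$) with one jump of $\gamma$ or $\tw\gamma$ --- so no strengthened lemma and no removal of exponentials is needed. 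Your grouping $A_j=\pi_{x_j}e^{\ell_j\sh_{x_{j-1}}}$ instead produces the same-index difference $\pi_{x_j}-\pi_{\tw x_j}$, and you correctly diagnose in your ``obstacle'' paragraph why Lemma \ref{lemma:pipi-pipi} alone then fails (it would generate products of two jumps and terms with no factor of $\|\gamma-\tw\gamma\|_\infty$); your remedy --- the sharpened estimate $\|\pi_z(\pi_y-\pi_x)\pi_w\|\le C|x-y|(|x-w|+|y-z|)$ --- is true, and your proof of it is sound: expanding $\pi_z=\pi_y+(\pi_z-\pi_y)$, $\pi_w=\pi_x+(\pi_w-\pi_x)$, the term $\pi_y(\pi_y-\pi_x)\pi_x$ vanishes by idempotence and the remaining three terms are handled by \eqref{eq:pi-est} (with the doubly small term absorbed using the boundedness of $M_R$). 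The extra step of replacing the adjacent exponentials by the identity costs only $O(\ell_j\|\gamma-\tw\gamma\|_\infty)$ per term by \eqref{eq:e^S-est2}, hence $O(T\|\gamma-\tw\gamma\|_\infty)$ in total, and your treatment of the edge terms ($j=1$ via $\pi_{\tw x_0}\bv_0=\bv_0$, and $j=k$ crudely) is fine; minor slips (counting ``three'' intervening exponentials where only two need removing, dropping a harmless $e^{K\ell_k}$ in the $j=k$ bound) do not affect the argument. In short: the paper's mismatched grouping buys a direct application of its stated projection lemma, while your route costs a (correct and easily proved) strengthening of that lemma plus one extra approximation step, but yields the same estimate with the same dependence of the constant.
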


\begin{proof}
Lemma \ref{lemma:finite-Y} shows that $\|\bv \|_\infty$ and
$\|\tw \bv \|_\infty$ are both bounded by $C|\bv _0|$ for some
$C$ depending only on $M$, $T$, $\|\gamma \|_\infty$, and
$\|\tw \gamma \|_\infty$. Fix $t\in [0,T]$, and let
$0=t_0<\dots<t_k\le t$ denote a finite partition that includes
all of the discontinuities of $\gamma $ and $\tw \gamma $ in
$[0,t]$. We introduce the following shorthand notations:
\begin{align*}
t_{k+1}&=t, &
l_i &= t_{i+1} - t_i,\\
x_i &= \gamma (t_i), &
\tw x_i &= \tw \gamma (t_i),\\
\sh _i &= \sh (x_i),&
\tw \sh _i &= \sh (\tw x_i),\\
\pi_i &= \pi_{x_i},&
\tw\pi_i &= \pi_{\tw x_i}.
\end{align*}

Observing that $\pi_0\bv _0=\bv _0$ and $\tw \pi_{k+1}\tw \bv
(t) = \tw \bv (t)$, we can write $\bv (t)-\tw \bv (t)$ as a
telescoping sum:
\begin{equation*}
\bv (t) - \tw \bv (t) =  \sum_{i=0}^{k}   e^{l_{k} \sh _{k}}
\pi_k \cdots e^{l_{i+1} \sh _{i+1}}  \pi_{i+1} \left( e^{l_{i}
\sh _{i}} \pi_{i} - \tw \pi_{i+1} e^{l_{i} \tw \sh _{i}}
\right) \tw \pi_{i} \cdots e^{l_1 \tw \sh _{1}} \tw \pi_{1}
e^{l_0 \tw \sh _{0}} \bv _0.
\end{equation*}
By \eqref{eq:comp-est}, the compositions of operators before and after the
parentheses in the summation above are uniformly bounded
in operator norm by $e^{KT}$.
Therefore,
\begin{displaymath}
|\bv (t) - \tw \bv (t)| \le e^{2KT} \sum_{i=0}^{k}\left\|
\pi_{i+1}\circ \left( e^{l_{i}  \sh _{i}} \circ\pi_{i} - \tw
\pi_{i+1} \circ e^{l_{i} \tw \sh _{i}} \right) \circ\tw \pi_{i}
\right\|\, |\bv _0|.
\end{displaymath}

Using the fact that $\sh _{i}$ and $\pi_{i}$ commute, as do
$\tw \sh _i$ and $\tw\pi_i$, we decompose the middle factors as
follows:
\begin{align*}
\pi_{i+1}\circ
\left(
e^{l_{i}  \sh _{i}} \circ\pi_{i} -
\tw \pi_{i+1} \circ e^{l_{i} \tw \sh _{i}}
\right) \circ\tw \pi_{i}
&=
\pi_{i+1}\circ\pi_i \circ
\left(
e^{l_i \sh _i} - e^{l_i \tw \sh _i}
\right) \circ
\tw \pi_i\\
&\quad
+ \pi_{i+1} \circ
\left(
\pi_i - \tw\pi_{i+1}
\right)
\circ \tw \pi_i
\circ e^{l_i\tw \sh _i} .
\end{align*}
We will deal with each of these terms separately.

For the first term, \eqref{eq:e^S-est3} implies
\begin{displaymath}
 \left\|e^{l_i \sh _i} - e^{l_i \tw \sh _i}\right\|
\le K l_i |x_i - \tw x_i|\le K l_i \|\gamma -\tw \gamma
\|_{\infty},
\end{displaymath}
and after summing over $i$, we find that this is bounded by
$KT\|\gamma -\tw \gamma \|_{\infty}$. For the second term,
Lemma \ref{lemma:pipi-pipi} allows us to conclude that
\begin{align*}
\biggl\|
\pi_{i+1} &\circ
\left(
\pi_i - \tw\pi_{i+1}
\right)
\circ \tw \pi_i
\circ e^{l_i\tw \sh _i}
\biggr\|
\\
&\le
C \left(
\left|x_{i+1} - x_i\right|
\left|x_i - \tw x_{i}\right|
+
\left|x_{i+1} - \tw x_{i+1}\right|
\left|\tw x_{i+1} - \tw x_{i}\right|
\right)
\, \left\|e^{l_i\tw \sh _i}\right\|\\
&\le Ce^{KT} \|\gamma -\tw \gamma \|_{\infty}
\left(
\left|x_{i+1} - x_i\right|
+
\left|\tw x_{i+1} - \tw x_{i}\right|
\right).
\end{align*}
After summing, this is bounded by $Ce^{KT} \|\gamma -\tw \gamma
\|_{\infty}\left(\|d\gamma \| + \|d\tw \gamma \|\right)$. This
completes the proof.
\end{proof}

\begin{lemma}\label{lemma:unif-approx}
Let $\gamma \in\NBV([0,T];M)$ be arbitrary. For any
$\epsilon>0$, there exists a finite trajectory  $\tw \gamma
\colon [0,T]\to M$ such that $\|\gamma -\tw \gamma
\|_{\infty}<\epsilon$ and $\|d\tw \gamma \|\le \|d\gamma \|$.
\end{lemma}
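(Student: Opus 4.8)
The plan is to obtain $\tw\gamma$ as a step function that \emph{samples the values of $\gamma$ itself} at a suitable finite partition $0=t_0<t_1<\dots<t_m=T$: set $\tw\gamma(t)=\gamma(t_i)$ for $t\in[t_i,t_{i+1})$, $0\le i\le m-1$, and $\tw\gamma(T)=\gamma(T)$. Sampling $\gamma$'s own values automatically guarantees that $\tw\gamma$ takes only finitely many values, each lying in $M$, so $\tw\gamma$ is a finite trajectory; it also makes the total-variation bound nearly automatic. Thus all the content lies in choosing the partition fine enough that $\|\gamma-\tw\gamma\|_\infty<\epsilon$, which is exactly the classical fact that a \cadlag\ function on a compact interval is a uniform limit of step functions.

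First I would fix the partition by a greedy procedure. Given $\eta>0$ (I will take $\eta=\epsilon/2$ at the end), for $t\in[0,T)$ let $\sigma(t)$ be the supremum of all $s\in(t,T]$ such that $|\gamma(u)-\gamma(t)|<\eta$ for every $u\in[t,s)$. Right-continuity of $\gamma$ at $t$ gives $\sigma(t)>t$, and taking the union over such $s$ shows $|\gamma(u)-\gamma(t)|<\eta$ for all $u\in[t,\sigma(t))$; moreover, if $\sigma(t)<T$ then one can pick $u_n\downarrow\sigma(t)$ with $|\gamma(u_n)-\gamma(t)|\ge\eta$, and right-continuity at $\sigma(t)$ forces $|\gamma(\sigma(t))-\gamma(t)|\ge\eta$. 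Now set $t_0=0$ and $t_{i+1}=\sigma(t_i)$ as long as $t_i<T$. The main point --- and the only place where the \cadlag\ hypothesis is \emph{essential} --- is that this process terminates. If it did not, $(t_i)$ would be strictly increasing and bounded, hence convergent to some $s^*\le T$; since $\gamma$ has a left-hand limit at $s^*$, the sequence $(\gamma(t_i))$ would be Cauchy, so $|\gamma(t_{i+1})-\gamma(t_i)|\to 0$ --- contradicting $|\gamma(t_{i+1})-\gamma(t_i)|=|\gamma(\sigma(t_i))-\gamma(t_i)|\ge\eta$ for all $i$ (valid since each $t_{i+1}=\sigma(t_i)<T$). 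Hence $t_m=T$ for some $m$, and $|\gamma(u)-\gamma(t_i)|<\eta$ whenever $t_i\le u<t_{i+1}$.

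Finally I would verify the three required properties of $\tw\gamma$. It is a right-continuous step function into $M$ with finitely many values, hence a finite trajectory. With $\eta=\epsilon/2$ we get $|\gamma(u)-\tw\gamma(u)|=|\gamma(u)-\gamma(t_i)|<\epsilon/2$ for $u\in[t_i,t_{i+1})$ and $|\gamma(T)-\tw\gamma(T)|=0$, so $\|\gamma-\tw\gamma\|_\infty\le\epsilon/2<\epsilon$. For the variation bound, $\tw\gamma$ is a step function with $d\tw\gamma=\sum_{i=0}^m\Delta_{t_i}(\tw\gamma)\delta_{t_i}$, and $\Delta_{t_0}(\tw\gamma)=\gamma(0)=d\gamma(\{0\})$ while $\Delta_{t_i}(\tw\gamma)=\gamma(t_i)-\gamma(t_{i-1})=d\gamma((t_{i-1},t_i])$ for $1\le i\le m$; since the sets $\{0\},(t_0,t_1],\dots,(t_{m-1},t_m]$ partition $[0,T]$,
\begin{displaymath}
\|d\tw\gamma\|=|d\gamma(\{0\})|+\sum_{i=1}^m|d\gamma((t_{i-1},t_i])|\le |d\gamma|(\{0\})+\sum_{i=1}^m|d\gamma|((t_{i-1},t_i])=|d\gamma|([0,T])=\|d\gamma\|.
\end{displaymath}
The only genuine obstacle is the termination of the greedy step, i.e.\ the standard passage from ``regulated'' to ``uniformly approximable by step functions''; everything else is bookkeeping with the conventions fixed above.
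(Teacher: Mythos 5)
Your proof is correct, and it reaches the same kind of approximant as the paper---a right-continuous step function whose values are sampled values $\gamma(t_i)$ of $\gamma$ itself, so that the bound $\|d\tw\gamma\|\le\|d\gamma\|$ comes essentially for free---but the way you produce the partition is genuinely different. The paper argues by compactness: at each point $a$ it uses right-continuity and the existence of the left limit to get a two-sided control interval $(a-\delta,a+\delta)$ (with an $\epsilon/2$ margin on the left side), extracts a finite subcover, inserts intermediate points $b_i\in(a_{i-1},a_{i-1}+\delta_{i-1})\cap(a_i-\delta_i,a_i)$, and defines $\tw\gamma$ to be $\gamma(a_{i-1})$ and then $\gamma(b_i)$ on the two resulting subintervals; the left-limit hypothesis enters through the triangle inequality $|\gamma(t)-\gamma(b_i)|\le|\gamma(t)-\gamma(a_i-)|+|\gamma(a_i-)-\gamma(b_i)|$. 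You instead run a greedy ``first exit'' recursion $t_{i+1}=\sigma(t_i)$, where only right-continuity is needed to advance, and the \cadlag\ left-limit hypothesis enters once, to prove termination: a non-terminating sequence would converge to some $s^*$, forcing $(\gamma(t_i))$ to be Cauchy and contradicting $|\gamma(t_{i+1})-\gamma(t_i)|\ge\eta$. Your key steps are sound (the verification that $|\gamma(\sigma(t))-\gamma(t)|\ge\eta$ when $\sigma(t)<T$ correctly uses right-continuity at $\sigma(t)$, and the termination argument correctly notes that all intermediate $t_{i+1}$ are $<T$), and your variation estimate, via $\Delta_{t_i}(\tw\gamma)=d\gamma((t_{i-1},t_i])$ and countable additivity of $|d\gamma|$ over the partition $\{0\},(t_0,t_1],\dots,(t_{m-1},t_m]$ of $[0,T]$, is a careful spelled-out version of what the paper dismisses as ``clear from the definition of the total variation.'' What each approach buys: the covering proof is shorter to state and localizes the $\epsilon$-control around prescribed points, while your greedy proof isolates exactly where left limits are used, produces a partition adapted to the oscillation of $\gamma$ (each sampled value is held until $\gamma$ moves by $\eta$), and avoids the $\epsilon/2$ bookkeeping near jump points.
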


\begin{proof}
Let $\epsilon$ be given.  Since $\gamma $ is \cadlag, for each
$a\in [0,T]$, there exists $\delta>0$ such that for $t\in
[0,T]$,
\begin{align}
t\in [a,a+\delta) &\implies |\gamma (t)-\gamma (a)|<\epsilon,\label{eq:jump-est-r}\\
t\in (a-\delta,a) &\implies |\gamma (t)-\gamma (a-)|<\frac{\epsilon}{2}.\label{eq:jump-est-l}
\end{align}
By compactness, we can choose finitely many points $0=a_0<a_1<\dots a_m=T$
and corresponding positive numbers $\delta_0,\dots, \delta_m$ so that
$[0,T]$ is covered by the
intervals $(a_i-\delta_i,a_i+\delta_i)$, $i=1,\dots,m$.
Because they are a cover, for each $i=1,\dots,m$ we can choose $b_i$
such that
\begin{displaymath}
b_i \in (a_{i-1},a_{i-1} + \delta_{i-1})
\cap (a_{i} - \delta_{i},a_i).
\end{displaymath}

Now define a finite trajectory $\tw \gamma \colon [0,T]\to M$
by
\begin{displaymath}
\tw \gamma (t) =
\begin{cases}
\gamma (a_{i-1}), &t\in [a_{i-1},b_i),\\
\gamma (b_i),     &t\in [b_i,a_i).
\end{cases}
\end{displaymath}
It is clear from the definition of the total variation that
$\|d\tw \gamma \|\le \|d\gamma \|$. We will show that $\|\gamma
-\tw \gamma \|_\infty<\epsilon$.

Let $t\in[0,T]$ be arbitrary.  For some $i$, either
$t\in [a_{i-1},b_i)$ or $t\in [b_i,a_i)$.
In the first case,
since $[a_{i-1},b_i) \subset [a_{i-1},a_{i-1} + \delta_{i-1})$
by construction, \eqref{eq:jump-est-r} yields
\begin{displaymath}
|\gamma (t)-\tw \gamma (t)| = |\gamma (t)-\gamma (a_{i-1})| <
\epsilon.
\end{displaymath}
On the other hand, if $t\in [b_i,a_i)\subset (a_i-\delta_i,a_i)$,
\eqref{eq:jump-est-l} yields
\begin{align*}
|\gamma (t)-\tw \gamma (t)|
&= |\gamma (t)-\gamma (b_i)|\\
&\le |\gamma (t)-\gamma (a_i-)| + |\gamma (a_i-)-\gamma (b_i)|\\
&< \frac{\epsilon}{2} + \frac{\epsilon}{2},
\end{align*}
so we reach the same conclusion.
\end{proof}

\begin{lemma}\label{lemma:sequence}
For any $\gamma \in\NBV([0,T];M)$, there exists a sequence of
finite trajectories $\gamma ^{(k)}\colon [0,T]\to M$ satisfying
$\|d\gamma ^{(k)}\|\le \|d\gamma \|$ and converging uniformly
to $\gamma $.
\end{lemma}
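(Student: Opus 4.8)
The plan is to obtain the sequence directly from Lemma \ref{lemma:unif-approx} with no extra work beyond choosing a null sequence of tolerances. For each positive integer $k$, apply Lemma \ref{lemma:unif-approx} to $\gamma$ with $\epsilon = 1/k$; this produces a finite trajectory $\gamma^{(k)}\colon [0,T]\to M$ satisfying $\|\gamma - \gamma^{(k)}\|_\infty < 1/k$ and $\|d\gamma^{(k)}\| \le \|d\gamma\|$. (Lemma \ref{lemma:unif-approx} already guarantees that each $\gamma^{(k)}$ is genuinely $M$-valued and a finite trajectory, so there is nothing to verify there.)

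It then remains only to record that this collection has the two asserted properties. The total-variation bound $\|d\gamma^{(k)}\| \le \|d\gamma\|$ holds for every $k$ by construction. And $\|\gamma - \gamma^{(k)}\|_\infty < 1/k \to 0$ as $k\to\infty$ is exactly the statement that $\gamma^{(k)} \to \gamma$ uniformly on $[0,T]$. This completes the argument.

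I do not expect any genuine obstacle: the substantive content is entirely contained in Lemma \ref{lemma:unif-approx}, and the present lemma merely repackages it as a sequence. The only point worth flagging is that the approximants $\gamma^{(k)}$ need be neither nested nor monotone in any sense, but the statement claims nothing of the kind, so this is harmless; the partitions underlying different $\gamma^{(k)}$ may be completely unrelated.
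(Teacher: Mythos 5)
Your proposal is correct and matches the paper exactly: the paper's proof is simply that the lemma is an immediate consequence of Lemma \ref{lemma:unif-approx}, which is precisely your argument with $\epsilon = 1/k$.
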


\begin{proof}
This is an immediate consequence of Lemma \ref{lemma:unif-approx}.
\end{proof}

Now we can prove the existence and uniqueness theorem.

\begin{proof}[Proof of Theorem \ref{thm:existence-uniqueness}]
Given $\gamma $ as in the statement of the theorem, let $\gamma
^{(k)}$ be a sequence of finite trajectories converging
uniformly to $\gamma $ as guaranteed by Lemma
\ref{lemma:sequence}. For each $k$, let $\bv ^{(k)}$ be the
solution to \eqref{eq:ODE} for $\gamma =\gamma ^{(k)}$, as
defined by \eqref{eq:finite-Y}. Then Lemma
\ref{lemma:finite-Y-est} guarantees that the sequence $\bv
^{(k)}$ is uniformly Cauchy, and hence there is a limit
function $\bv \colon [0,T]\to \R^n$ such that $\bv ^{(k)}\to
\bv $ uniformly. It is straightforward to check that $\bv \in
\NBV([0,T];\R^n)$. Moreover, since each $\bv ^{(k)}$ is tangent
to $M$ and $\bv ^{(k)}\to \bv $ uniformly, it follows that $\bv
$ is also tangent to $M$.

We need to show that $\bv $ solves  \eqref{eq:ODE} for $\gamma
$. It suffices to show for any $\bw \in \NBV([0,T];\R^n)$ that
\begin{displaymath}
\int_{[0,T]} \<\bw ,d\bv \> = - \int_{[0,T]} \<\bw ,\n \circ
\gamma \> \< \bv _-, d(\n \circ \gamma )\> + \int_{[0,T]} \<
\bw , (\sh \circ \gamma )\bv \>\,dt.
\end{displaymath}
If we write $\bw  = \bw ^\top + \bw ^\perp$, where $\bw ^\top$
is tangent to $M$ and $\bw ^\perp$ is orthogonal to $M$, this
is equivalent to the following two equations:
\begin{align}
\int_{[0,T]} \<\bw ^\perp,d\bv \>
&= - \int_{[0,T]}\<\bw ^\perp,\n \circ \gamma \> \< \bv _-, d(\n \circ \gamma )\>,\label{eq:perp-part}\\
\int_{[0,T]} \<\bw ^\top,d\bv \>
&= \int_{[0,T]} \< \bw ^\top, (\sh \circ \gamma )\bv \>\,dt.\label{eq:tan-part}
\end{align}

Because $\bw ^\perp$ is proportional to $\n $, $\bw ^\perp =
\<\bw ^\perp,\n \circ \gamma \>\n \circ \gamma $. The fact that
$\bv $ is tangent to $M$ means that $\<\n \circ \gamma ,\bv
\>\equiv 0$, from which we conclude
\begin{align*}
0 &= d\<\n \circ \gamma ,\bv \>= \< \n \circ \gamma , d\bv \>+\<\bv _-,d(\n \circ \gamma )\>.
\end{align*}
Therefore,
\begin{align*}
\<\bw ^\perp,d\bv \>
&= \bigl\< \<\bw ^\perp,\n \circ \gamma \>\n \circ \gamma , d\bv  \bigr\>\\
&= \<\bw ^\perp,\n \circ \gamma \>\< \n \circ \gamma  , d\bv \> \\
&= - \<\bw ^\perp,\n \circ \gamma \> \<\bv _-,d(\n \circ \gamma )\> ,
\end{align*}
from which \eqref{eq:perp-part} follows.

On the other hand,
from Lemma \ref{lemma:byparts} we conclude that
\begin{align*}
\int_{[0,T]} \<\bw ^\top,d\bv \>
&=
\<\bw ^\top(T),\bv (T)\>
- \int_{[0,T]} \<\bv _-,d\bw ^\top\>\\
&= \lim_{k\to \infty}\left(
\<\bw ^\top(T),\bv ^{(k)}(T)\>
- \int_{[0,T]} \<\bv _-^{(k)},d\bw ^\top\>\right)\\
&= \lim_{k\to \infty}
\int_{[0,T]} \<\bw ^\top,d\bv ^{(k)}\>\\
&= \lim_{k\to \infty}\biggl(- \int_{[0,T]}\<\bw ^\top,\n \circ \gamma ^{(k)}\> \< \bv ^{(k)}_-, d(\n \circ \gamma ^{(k)})\>\\
&\qquad \qquad+ \int_{[0,T]} \< \bw ^\top, (\sh \circ \gamma ^{(k)})\bv ^{(k)}\>\,dt\biggr).
\end{align*}
Since $\<\bw ^\top , \n \circ \gamma ^{(k)}\>$ converges
uniformly to $\<\bw ^\top ,\n \circ \gamma \> \equiv 0$, and
the measures $\< \bv ^{(k)}_-, d(\n \circ \gamma ^{(k)})\>$
have uniformly bounded total variation, the first term above
vanishes in the limit. Since both $\bv ^{(k)}$ and $\sh \circ
\gamma ^{(k)}$ converge uniformly, the last term above
converges to $\int_{[0,T]} \< \bw ^\top, (\sh \circ \gamma )\bv
\>\,dt$. This proves \eqref{eq:tan-part}.
\end{proof}

\subsection{Stability}

In this section, we wish to address the stability of the
solution to \eqref{eq:ODE} under perturbations of the
trajectory $\gamma $. For applications to probability, we will
need to consider perturbations in a weaker topology than the
uniform one.

We define a metric  $d_S$ on $\NBV([0,T];\R^n)$, called
the {\it Skhorokhod metric}, by
\begin{displaymath}
d_S(\gamma ,\tw \gamma ) = \inf_{\lambda\in\Lambda} \max\left(
\|\gamma  - \tw \gamma \circ\lambda\|_\infty,
\|\lambda-\Id\|_\infty\right),
\end{displaymath}
where $\Lambda$ is the set of increasing homeomorphisms $\lambda\colon[0,T]\to [0,T]$.
We wish to show that the solution to \eqref{eq:ODE} is continuous in the
Skorokhod metric, as long as we stay within a set of trajectories with
uniformly bounded total variation.

Because the Skorokhod metric is not homogeneous with respect to
constant multiples, it will not be possible to bound $d_S(\bv
,\tw \bv )$ directly in terms of $d_S(\gamma ,\tw \gamma )$.
For this reason, we will work instead with the {\it solution
operator}: for any $\NBV$ trajectory $\gamma \colon [0,T]\to
M$, this is the endomorphism-valued function $\A\colon [0,T]\to
\End(\R^n)$ defined by
\begin{displaymath}
\A(t)\bv _0 = \bv (t),
\end{displaymath}
where $\bv $ is the solution to \eqref{eq:ODE} with initial
value $\bv _0$, and extended to an endomorphism of $\R^n$ by
declaring $\A(t)\n _{\gamma (0)} = 0$. As before, $\|\A(t)\|$
will denote the operator norm of $\A(t)$, and we set
\begin{align*}
\|\A\|_\infty &=
\sup \{ \|\A(t)\|: t\in [0,T]\}\\
&= \sup \left\{ \frac{|\bv (t)|}{|\bv _0|}: t\in [0,T], \ \bv _0 \in \tngt _{\gamma (0)}M,\ \bv _0\ne 0\right\}.
\end{align*}
It follows easily from the results of the preceding section
that for any $\gamma \in \NBV([0,T];M)$, the solution operator
$\A$ is in $\NBV([0,T];\End(\R^n))$, and Lemma
\ref{lemma:finite-Y-est} translates immediately into the
following estimate.

\begin{lemma}\label{lemma:finite-B-est}
Suppose $\gamma $ and $\tw \gamma $ are any finite trajectories
in $M$ defined on $[0,T]$ and starting at the same point, and
$\A$, $\tw \A$ are the corresponding solution operators. There
is a constant $C$ depending only on $M$, $T$, $\|d\gamma \|$,
and $\|d\tw \gamma \|$ such that the following estimate holds:
\begin{displaymath}
\|\A- \tw \A\|_{\infty} \le C \|\gamma -\tw \gamma \|_{\infty}.
\end{displaymath}
\end{lemma}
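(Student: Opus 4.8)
The plan is to reduce the operator-norm estimate directly to the vector-valued estimate of Lemma \ref{lemma:finite-Y-est}, using the fact that, because $\gamma$ and $\tw\gamma$ start at the same point, both solution operators are supported on the common tangent space $\tngt_{\gamma(0)}M = \tngt_{\tw\gamma(0)}M$.

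First I would record the elementary linear-algebra reduction. Since $\gamma(0) = \tw\gamma(0)$, we have $\n(\gamma(0)) = \n(\tw\gamma(0))$, so by the very definition of the solution operator both $\A(t)$ and $\tw\A(t)$ annihilate the same normal line, and therefore $\A(t) = \A(t)\circ\pi_{\gamma(0)}$ and $\tw\A(t) = \tw\A(t)\circ\pi_{\gamma(0)}$. Hence for any $\bz\in\R^n$,
\[
(\A(t) - \tw\A(t))\bz = (\A(t) - \tw\A(t))\,\pi_{\gamma(0)}\bz .
\]
Writing $\bv_0 = \pi_{\gamma(0)}\bz \in \tngt_{\gamma(0)}M$, which satisfies $|\bv_0|\le|\bz|$ since $\pi_{\gamma(0)}$ is an orthogonal projection, the right-hand side equals $\bv(t) - \tw\bv(t)$, where $\bv$ and $\tw\bv$ are the solutions of \eqref{eq:ODE} along $\gamma$ and $\tw\gamma$ respectively with the common initial value $\bv_0$; here I am using the linearity of $\bv_0\mapsto\bv(t)$ that was already exploited in the uniqueness argument.

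Next I would invoke Lemma \ref{lemma:finite-Y-est} to bound $|\bv(t) - \tw\bv(t)| \le C\,(1 + \|d\gamma\| + \|d\tw\gamma\|)\,\|\gamma - \tw\gamma\|_\infty\,|\bv_0|$, with $C$ depending only on $M$, $T$, $\|\gamma\|_\infty$, and $\|\tw\gamma\|_\infty$; since $\|\gamma\|_\infty\le\|d\gamma\|$ and $\|\tw\gamma\|_\infty\le\|d\tw\gamma\|$, and since the statement we are proving permits the constant to depend on $\|d\gamma\|$ and $\|d\tw\gamma\|$, the factor $(1 + \|d\gamma\| + \|d\tw\gamma\|)$ can be absorbed. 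Combining this with $|\bv_0|\le|\bz|$ and taking the supremum over $\bz$ in the unit ball gives $\|\A(t) - \tw\A(t)\| \le C\,\|\gamma - \tw\gamma\|_\infty$ for every $t\in[0,T]$, and taking the supremum over $t$ finishes the proof.

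There is no real obstacle here: the substance is entirely contained in Lemma \ref{lemma:finite-Y-est}, and the only point requiring a moment's care is the first one — checking that the extension of $\A(t)$ and $\tw\A(t)$ by zero on the normal direction is \emph{consistent} for the two trajectories, which holds precisely because $\gamma(0) = \tw\gamma(0)$, so that the difference $\A(t) - \tw\A(t)$ is controlled by its restriction to $\tngt_{\gamma(0)}M$. Were the two trajectories allowed to start at different points, this reduction would fail and one would additionally have to estimate $\|\pi_{\gamma(0)} - \pi_{\tw\gamma(0)}\|$ via \eqref{eq:pi-est}; since they are not, the lemma is immediate.
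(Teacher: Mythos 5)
Your proposal is correct and matches the paper's intent: the paper gives no separate proof, stating only that Lemma \ref{lemma:finite-Y-est} ``translates immediately'' into this operator-norm estimate, and your argument is exactly that translation (both operators kill the common normal direction $\n(\gamma(0))$, so the difference is controlled by its action on $\tngt_{\gamma(0)}M$, where Lemma \ref{lemma:finite-Y-est} applies). Your absorption of the dependence on $\|\gamma\|_\infty$, $\|\tw\gamma\|_\infty$ and of the factor $1+\|d\gamma\|+\|d\tw\gamma\|$ into a constant depending on $\|d\gamma\|$, $\|d\tw\gamma\|$ via $\|u\|_\infty\le\|du\|$ is consistent with the paper's conventions.
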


Next we need to examine the effect of a reparametrization on the
solution associated with a finite trajectory.

\begin{lemma}\label{lemma:lambda-estimate}
Let $\gamma \colon [0,T]\to M$ be a finite trajectory, let
$\lambda\colon [0,T]\to [0,T]$ be an increasing homeomorphism,
and let $\tw \gamma  = \gamma \circ\lambda$. There is a
constant $C$ depending only on $M$, $T$, and $\|d\gamma \|$
such that the solutions $\bv $ and $\tw \bv $ to \eqref{eq:ODE}
associated to $\gamma $ and $\tw \gamma $ with the same initial
value $\bv _0$ satisfy
\begin{equation}\label{eq:dS-est-for-Y}
\|\bv  - \tw \bv \circ\lambda\|_\infty \le C\|\lambda-\Id\|_\infty|\bv _0|.
\end{equation}
\end{lemma}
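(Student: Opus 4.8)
The key observation is that if $\gamma$ is a finite trajectory and $\tw\gamma = \gamma\circ\lambda$, then the values taken by $\gamma$ and $\tw\gamma$ are exactly the same (only the times of the jumps move), so the explicit product formula \eqref{eq:finite-Y} for $\bv$ and $\tw\bv\circ\lambda$ involves the \emph{same} points $x_i$ and the \emph{same} projections $\pi_{x_i}$, $\sh_{x_i}$; only the time increments $l_i = t_{i+1}-t_i$ versus $\tw l_i = \lambda^{-1}(t_{i+1}) - \lambda^{-1}(t_i)$ differ. More precisely, the plan is: pick a common partition $0 = t_0 < \dots < t_m = T$ containing all discontinuities of $\gamma$, so that the points $s_i = \lambda^{-1}(t_i)$ are a partition refining the discontinuities of $\tw\gamma$, and write $x_i = \gamma(t_i) = \tw\gamma(s_i)$. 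Then for $t\in[0,T]$ with $t_k \le t < t_{k+1}$ we compare $\bv(t)$ with $\tw\bv(\lambda(t))$: both are products of the form $e^{a_k\sh_{x_k}}\pi_{x_k}\cdots \pi_{x_1}e^{a_0\sh_{x_0}}\bv_0$ with the same operators, where for $\bv$ the exponents are the increments of $t_i$ (and of $t$) while for $\tw\bv\circ\lambda$ they are the increments of $\lambda^{-1}(t_i)$ (and of $\lambda^{-1}(\lambda(t)) = t$, which actually agrees). So only the interior exponents differ, each by at most $2\|\lambda - \Id\|_\infty$.

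The estimate then proceeds exactly as in Lemma \ref{lemma:finite-Y-est}: write the difference as a telescoping sum, factoring through the places where the two products disagree. At stage $i$ the discrepancy is of the form $\pi_{x_{i+1}}\bigl(e^{l_i\sh_{x_i}} - e^{\tw l_i\sh_{x_i}}\bigr)\pi_{x_i}$; using the bound $\|e^{a\sh_{x_i}} - e^{b\sh_{x_i}}\| \le \|\sh_{x_i}\|\,|a-b|\,e^{K\max(a,b)} \le K e^{KT}|a-b|$, which follows from \eqref{eq:S-norm-est} and \eqref{eq:e^S-est} (and the elementary estimate $\|e^{A} - e^{B}\| \le \|A - B\|\max(\|e^A\|,\|e^B\|)$ when $A,B$ commute), together with $|l_i - \tw l_i| = |\,(t_{i+1}-t_i) - (\lambda^{-1}(t_{i+1}) - \lambda^{-1}(t_i))\,| \le 2\|\lambda^{-1} - \Id\|_\infty$, each term is bounded by $C\|\lambda^{-1} - \Id\|_\infty$. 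The compositions before and after the $i$-th factor are uniformly bounded in operator norm by $e^{2KT}$ thanks to \eqref{eq:comp-est}. Summing over $i = 0,\dots,k$ would naively give a factor $k$, so the genuine point to check is that the sum telescopes correctly; in fact the correct way to organize it is to accumulate the \emph{total} exponent discrepancy: since $\sum_i (l_i - \tw l_i)$ collapses to a single term $t - \lambda^{-1}(t)$ of size at most $\|\lambda^{-1} - \Id\|_\infty$, a more careful telescoping (inserting the hybrid products one exponent at a time, and bounding $\|e^{a\sh}\pi - e^{b\sh}\pi\| \le Ke^{KT}|a-b|$) yields a bound proportional to $\sum_i |l_i - \tw l_i|$. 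But $\sum_i |l_i - \tw l_i|$ need not be small — only the alternating sum is. The honest fix is to compare $\bv(t)$ and $\tw\bv(\lambda(t))$ through the ODE rather than term by term: both satisfy the same measure ODE \eqref{eq:ODE} but driven by the measures $dt$ pushed forward under different time changes, and $\cD(\bv - \tw\bv\circ\lambda)$ is controlled by $(\sh\circ\gamma)(\bv - \tw\bv\circ\lambda)\,dt$ plus an error measure supported where the time-reparametrized drift $(\sh\circ\tw\gamma\circ\lambda)\,d(\lambda^{-1})$ differs from $(\sh\circ\gamma)\,dt$, whose total variation is $\int_0^T |\,d\lambda^{-1} - dt\,| \le 2\|\lambda - \Id\|_\infty$ since $\lambda^{-1}$ is monotone. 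A Gr\"onwall argument of the kind used in the uniqueness lemma then closes the estimate.

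Concretely, the plan is: \textbf{(i)} set $\bw(t) = \bv(t) - \tw\bv(\lambda(t))$, note $\bw(0) = 0$ and $\bw$ is an $\NBV$ vector field along $\gamma$ (using that $\tw\bv\circ\lambda$ is tangent to $\tw\gamma\circ\lambda = \gamma$); \textbf{(ii)} compute $\cD\bw$ using that $\tw\bv$ solves \eqref{eq:ODE} along $\tw\gamma$ and the change of variables $u = \lambda(t)$, obtaining $\cD\bw = (\sh\circ\gamma)\bw\,dt + \bigl[(\sh\circ\gamma)\tw\bv\circ\lambda\bigr]\bigl(dt - d(\lambda^{-1}\circ\lambda)\bigr) + (\text{boundary jump terms that vanish})$, where the error measure has total variation at most $C\|d\gamma\|\,\|\lambda - \Id\|_\infty|\bv_0|$ by \eqref{eq:S-norm-est}, \eqref{eq:Y(t)-est} and monotonicity of $\lambda^{-1}$; \textbf{(iii)} apply the function $f(t) = e^{-2Kt}|\bw(t)|^2$ and the product rule as in the uniqueness lemma to get $df \le (\text{nonpositive drift}) + e^{-2Kt}\cdot 2\langle\bw, \text{error}\rangle$; \textbf{(iv)} integrate, use $\|\bw\|_\infty \le C|\bv_0|$ (from Lemma \ref{lemma:finite-Y}) to bound $\langle\bw,\text{error}\rangle$ by $C|\bv_0|\cdot\|\text{error}\|$, and conclude $\|\bw\|_\infty^2 \le C\|\lambda - \Id\|_\infty|\bv_0|^2$, hence \eqref{eq:dS-est-for-Y} after absorbing constants. \textbf{The main obstacle} is step (ii): one must be careful that the jump terms of $\tw\bv\circ\lambda$ at the reparametrized discontinuity times $\lambda^{-1}(t_i)$ are matched correctly against those of $\bv$ — i.e. that the "$\pi$"-type jumps at corresponding points cancel and only the drift discrepancy survives — and that the resulting error measure is genuinely controlled by $\|\lambda - \Id\|_\infty$ and not merely by the (possibly large) number of jumps. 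This is where the monotonicity of $\lambda$ is essential: $\int_0^T |d(\lambda^{-1}) - dt| = \int_0^T (1 - (\lambda^{-1})') \vee ((\lambda^{-1})' - 1)\,dt$ is bounded by the total displacement, which is $2\|\lambda^{-1} - \Id\|_\infty \le 2\|\lambda - \Id\|_\infty$ (the last inequality since $\lambda$ and $\lambda^{-1}$ have the same sup-distance to the identity on $[0,T]$).
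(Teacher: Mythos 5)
There is a genuine gap, and it sits exactly at the point you yourself flagged as ``the honest fix.'' Your replacement for the failed term-by-term telescoping rests on the claim that the signed measure $dt-d\lambda$ (equivalently $d(\lambda^{-1})-dt$) has total variation at most $2\|\lambda-\Id\|_\infty$. That is false: the total variation of this measure equals the total variation of the function $t\mapsto t-\lambda(t)$, which is not controlled by its sup norm, because $t-\lambda(t)$ need not be monotone even though $\lambda$ is. For instance, take $\lambda(t)=t+\phi(t)$ with $\phi$ a sawtooth oscillating between $0$ and a tiny $\delta$ with slopes $\pm\tfrac12$; then $\lambda$ is an increasing homeomorphism with $\|\lambda-\Id\|_\infty=\delta$, yet $\int_0^T|1-\lambda'|\,dt=T/2$. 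So the error measure in your step (ii) is not small in total variation, and the Gr\"onwall steps (iii)--(iv) do not close. A second, independent problem: even granting that bound, the $|\bw|^2$ energy argument yields $\|\bw\|_\infty^2\le C\|\lambda-\Id\|_\infty|\bv_0|^2$, i.e.\ only the square-root estimate $\|\bw\|_\infty\le C\|\lambda-\Id\|_\infty^{1/2}|\bv_0|$, which is not \eqref{eq:dS-est-for-Y}. The ODE route could probably be salvaged---integrate the error term by parts against the function $\Id-\lambda$, whose sup norm \emph{is} small, and run a Stieltjes--Gr\"onwall inequality on $|\bw|$ itself using the a priori bounds of Lemma \ref{lemma:finite-Y}---but as written the argument fails.

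The paper resolves the obstacle you correctly identified (that $\sum_i|l_i-\tw l_i|$ need not be small) while staying inside the telescoping framework of Lemma \ref{lemma:finite-Y-est}: the hybrid products are grouped so that, after commuting $\pi_i$ with $\sh_i$, the $i$-th middle factor becomes $e^{l_i\sh_i}\circ\pi_i\circ\bigl(e^{(t_i-\tw t_i)\sh_i}-e^{(t_i-\tw t_i)\sh_{i-1}}\bigr)\circ e^{\tw l_{i-1}\sh_{i-1}}$. Each difference thus involves the \emph{same} exponent $t_i-\tw t_i$, of size at most $\|\lambda-\Id\|_\infty$, applied at two \emph{neighboring} base points, so \eqref{eq:e^S-est3} bounds it by $K|t_i-\tw t_i|\,|x_i-x_{i-1}|$; summing over $i$ gives $K\|\lambda-\Id\|_\infty\|d\gamma\|$, the smallness coming from the time shift and the summability from the total variation of $\gamma$ (which is why $C$ may depend on $\|d\gamma\|$). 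This redistribution of the time discrepancy across the jumps of $\gamma$ is the key idea missing from your proposal.
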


\begin{proof}
As in the proof of Lemma \ref{lemma:finite-Y-est}, fix $t\in
[0,T]$ and let $0=t_0<t_1<\dots<t_k\le t$ be the points in
$[0,t]$ at which $\gamma $ is discontinuous.  Set $t_{k+1} =
t$, $x_i = \gamma (t_i)$, and $\tw t_i = \lambda(t_i)$, so that
$\gamma $ and $\tw \gamma $ are given by
\begin{align*}
\gamma (t) &= x_i \quad \text{if $t_i\le t < t_{i+1}$},\\
\tw \gamma (t) &= x_i \quad \text{if $\tw t_i\le t < \tw t_{i+1}$}.
\end{align*}
We will also use the notations
\begin{align*}
l_i &= t_{i+1}-t_i,\\
\tw l_i &= \tw t_{i+1} - \tw t_i,\\
\sh _i &= \sh (x_i),\\
\pi_i &= \pi_{x_i}.
\end{align*}

We can write $\tw \bv (\lambda(t))- \bv (t) = \tw \bv (\tw
t_{k+1}) - \bv (t_{k+1})$ as a telescoping sum:
\begin{align*}
\tw \bv (\lambda(t))- \bv (t)
&=
\left( \Id - e^{(t_{k+1} - \tw t_{k+1})\sh _k}\right)\tw \bv (\lambda(t))\\
&\quad +
\sum_{i=1}^{k} e^{l_{k} \sh _{k}} \pi_k
\cdots  e^{l_{i+1}\sh _{i+1}} \pi_{i+1}\circ \\
&\qquad
\left(
e^{(t_{i+1} - \tw t_i)\sh _{i}}  \pi_{i}  e^{\tw l_{i-1} \sh _{i-1}}
-
e^{l_i \sh _i}  \pi_i  e^{(t_i - \tw t_{i-1})\sh _{i-1}}
\right)\circ\\
&\qquad\quad   \pi_{i-1}  e^{\tw l_{i-2}\sh _{i-2}}
 \cdots  e^{\tw l_1 \sh _{1}}  \pi_{1}  e^{\tw l_0 \sh _{0}}\bv _0.
\end{align*}
By virtue of \eqref{eq:e^S-est2}, the first term is bounded by
a constant multiple of $|t_{k+1} - \tw t_{k+1}|\,|\bv _0|\le
\|\lambda - \Id\|_\infty|\bv _0|$. As before, the compositions
before and after the parentheses in the summation are uniformly
bounded in operator norm, so we need only estimate the sum
\begin{displaymath}
\sum_{i=1}^{k}\left\| e^{(t_{i+1} - \tw t_i)\sh _{i}} \circ
\pi_{i} \circ e^{\tw l_{i-1} \sh _{i-1}} - e^{l_i \sh _i} \circ
\pi_i \circ e^{(t_i - \tw t_{i-1})\sh _{i-1}} \right\|.
\end{displaymath}

Using the fact that $\pi_i$ commutes with $\sh _i$, we can
rewrite the $i$-th term in this sum as
\begin{multline*}
\left\| e^{l_i \sh _i} \circ \pi_i \circ \left( e^{(t_i - \tw
t_i)\sh _i} - e^{(t_i - \tw t_i) \sh _{i-1}}\right) e^{\tw
l_{i-1} \sh _{i-1}} \right\|
\\
\le \left\| e^{l_i \sh _i}\right\|
 \left\|
e^{(t_i - \tw t_i)\sh _i }- e^{(t_i - \tw t_i) \sh _{i-1}}
\right\| \left\|e^{\tw l_{i-1} \sh _{i-1}}\right\|.
\end{multline*}
{}From \eqref{eq:e^S-est} and \eqref{eq:e^S-est3}, this last
expression is bounded by $ C\left|t_i - \tw t_i\right|\,
\left|x_i - x_{i-1}\right|$.  Summing over $i$, we conclude
that this is bounded by $C \|\lambda-\Id\|_{\infty} \|d\gamma
\|$.
\end{proof}

\begin{lemma}\label{lemma:dS-estimate}
Suppose $\gamma , \tw \gamma \colon [0,T]\to M$ are finite
trajectories starting at the same point, and let $\A$, $\tw \A$
be the corresponding solution operators. There exists a
constant $C$ depending only on $M$, $T$, $\|d\gamma \|$, and
$\|d\tw \gamma \|$ such that
\begin{equation}\label{eq:finite-stability-est}
d_S\big(\A,\tw \A\big) \le C d_S\big(\gamma ,\tw \gamma \big).
\end{equation}
\end{lemma}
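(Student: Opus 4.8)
The plan is to transplant a near-optimal Skorokhod reparametrization of the pair $\gamma,\tw\gamma$ onto the solution operators, routing the comparison through an intermediate finite trajectory. Fix $\epsilon>0$ and choose $\lambda\in\Lambda$ with $\max\bigl(\|\gamma-\tw\gamma\circ\lambda\|_\infty,\ \|\lambda-\Id\|_\infty\bigr)<d_S(\gamma,\tw\gamma)+\epsilon$, and set $\eta=\tw\gamma\circ\lambda$. First I would check that $\eta$ is again a finite trajectory, starting at $\eta(0)=\tw\gamma(0)=\gamma(0)$: it is constant on the images under $\lambda^{-1}$ of the constancy intervals of $\tw\gamma$, and since the total variation of a finite trajectory is simply the sum of the magnitudes of its jumps, a monotone continuous reparametrization leaves it unchanged, so $\|d\eta\|=\|d\tw\gamma\|$. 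Let $\A_\eta$ be the solution operator associated with $\eta$.

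Next I would use the single reparametrization $\lambda^{-1}$ as the competitor in the infimum defining $d_S(\A,\tw\A)$. By the triangle inequality for $\|\centerdot\|_\infty$ we have $\|\A-\tw\A\circ\lambda^{-1}\|_\infty\le\|\A-\A_\eta\|_\infty+\|\A_\eta-\tw\A\circ\lambda^{-1}\|_\infty$. For the first term, $\gamma$ and $\eta$ are finite trajectories from the common point $\gamma(0)$ with $\|d\eta\|=\|d\tw\gamma\|$, so Lemma \ref{lemma:finite-B-est} gives $\|\A-\A_\eta\|_\infty\le C\|\gamma-\eta\|_\infty=C\|\gamma-\tw\gamma\circ\lambda\|_\infty$ with $C=C(M,T,\|d\gamma\|,\|d\tw\gamma\|)$. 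For the second term, I would apply Lemma \ref{lemma:lambda-estimate} with the trajectory $\tw\gamma$ and the homeomorphism $\lambda$, so that $\tw\gamma\circ\lambda=\eta$; for solutions with a common initial value this gives $\|\tw\bv-\bv_\eta\circ\lambda\|_\infty\le C\|\lambda-\Id\|_\infty|\bv_0|$, that is $\|\tw\A-\A_\eta\circ\lambda\|_\infty\le C\|\lambda-\Id\|_\infty$ with $C=C(M,T,\|d\tw\gamma\|)$, and the change of variable $t\mapsto\lambda(t)$ rewrites this as $\|\A_\eta-\tw\A\circ\lambda^{-1}\|_\infty\le C\|\lambda-\Id\|_\infty$. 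Since also $\|\lambda^{-1}-\Id\|_\infty=\|\lambda-\Id\|_\infty$, we obtain $d_S(\A,\tw\A)\le\max\bigl(\|\A-\tw\A\circ\lambda^{-1}\|_\infty,\ \|\lambda^{-1}-\Id\|_\infty\bigr)\le C\bigl(\|\gamma-\tw\gamma\circ\lambda\|_\infty+\|\lambda-\Id\|_\infty\bigr)$.

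Finally, enlarging $C$ we conclude $d_S(\A,\tw\A)\le 2C\bigl(d_S(\gamma,\tw\gamma)+\epsilon\bigr)$, and letting $\epsilon\downarrow0$ gives \eqref{eq:finite-stability-est} with a constant depending only on $M$, $T$, $\|d\gamma\|$, and $\|d\tw\gamma\|$. There is no serious obstacle here: the estimate is essentially the concatenation of Lemmas \ref{lemma:finite-B-est} and \ref{lemma:lambda-estimate} through the auxiliary trajectory $\eta$. The only points needing care are the bookkeeping with reparametrizations — checking that $\eta=\tw\gamma\circ\lambda$ is a finite trajectory with $\|d\eta\|=\|d\tw\gamma\|$ so that both lemmas apply with the stated constant dependencies, that $\|\lambda^{-1}-\Id\|_\infty=\|\lambda-\Id\|_\infty$, and that the change of variable correctly converts the bound for $\tw\gamma$ versus $\eta$ into a bound in the Skorokhod metric between $\A$ and $\tw\A$.
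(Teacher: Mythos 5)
Your proof is correct and follows essentially the same route as the paper's: compare $\A$ with the solution operator of the intermediate trajectory $\tw\gamma\circ\lambda$ via Lemma \ref{lemma:finite-B-est}, and handle the reparametrization via Lemma \ref{lemma:lambda-estimate}. The only cosmetic difference is that you feed the single homeomorphism $\lambda^{-1}$ directly into the infimum defining $d_S(\A,\tw\A)$, whereas the paper invokes the triangle inequality for $d_S$ through the intermediate operator; both yield the same estimate.
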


\begin{proof}
Let $\delta=d_S(\gamma ,\tw \gamma )$ and let $\epsilon>0$ be
arbitrary. By definition of the Skorokhod metric, there is an
increasing homeomorphism $\lambda\colon [0,T]\to [0,T]$ such
that $\|\gamma -\tw \gamma \circ\lambda\|_\infty\le
\delta+\epsilon$ and $\|\lambda-\Id\|_\infty\le
\delta+\epsilon$. Let $\A_1$ be the solution operator
associated with $\tw \gamma \circ\lambda$. Then
$\|\A-\A_1\|_\infty\le C(\delta+\epsilon)$ by Lemma
\ref{lemma:finite-B-est}, and $\|\tw \A -
\A_1\circ\lambda\|_\infty \le C(\delta+\epsilon)$ by Lemma
\ref{lemma:lambda-estimate}. Thus by the triangle inequality,
\begin{align*}
d_S(\A,\tw \A)
&\le
d_S(\A,\A_1) + d_S(\A_1,\tw \A)\\
&\le
\|\A-\A_1\|_\infty + \max\left( \|\tw \A - \A_1\circ\lambda\|_\infty,
\|\lambda-\Id\|_\infty\right)\\
&\le C(\delta+\epsilon) +
\max( C(\delta+\epsilon), \epsilon).
\end{align*}
Letting $\epsilon\to 0$, we obtain
\begin{displaymath}
d_S(\A,\tw \A) \le 2Cd_S(\gamma ,\tw \gamma ).
\end{displaymath}
\end{proof}

Here is our main stability result.

\begin{theorem}\label{thm:stability}
Given positive constants $R$ and $T$, there exists a constant
$C$ depending only on $M$, $R$, and $T$ such that for any
trajectories $\gamma ,\tw \gamma \in\NBV([0,T];M)$ starting at
the same point and with total variation bounded by $R$, the
corresponding solution operators $\A$ and $\tw \A$ satisfy
\begin{equation}\label{eq:stability-est}
d_S\big(\A,\tw \A\big) \le C d_S\big(\gamma ,\tw \gamma \big).
\end{equation}
\end{theorem}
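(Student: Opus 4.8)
The plan is to reduce the general case to the finite‑trajectory estimate of Lemma~\ref{lemma:dS-estimate} by approximation, taking care that the constant stays uniform (depending only on $M$, $T$, $R$) along the approximating sequences and that both the perturbations and the solution operators converge in the Skorokhod metric. Throughout, $\A$ and $\tw\A$ denote the solution operators of $\gamma$ and $\tw\gamma$, which are well‑defined elements of $\NBV([0,T];\End(\R^n))$ by Theorem~\ref{thm:existence-uniqueness}.

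First I would invoke Lemma~\ref{lemma:sequence} to choose sequences of finite trajectories $\gamma^{(k)},\tw\gamma^{(k)}\colon[0,T]\to M$ with $\|d\gamma^{(k)}\|\le\|d\gamma\|\le R$ and $\|d\tw\gamma^{(k)}\|\le\|d\tw\gamma\|\le R$, converging uniformly to $\gamma$ and $\tw\gamma$ respectively. By the construction in Lemma~\ref{lemma:unif-approx} each $\gamma^{(k)}$ takes only values already attained by $\gamma$, and in particular $\gamma^{(k)}(0)=\gamma(0)$; likewise $\tw\gamma^{(k)}(0)=\tw\gamma(0)=\gamma(0)$, so all four families share the common initial point. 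Since taking $\lambda=\Id$ in the infimum defining $d_S$ shows $d_S\le\|\centerdot\|_\infty$, uniform convergence implies $d_S(\gamma^{(k)},\gamma)\to0$ and $d_S(\tw\gamma^{(k)},\tw\gamma)\to0$, whence the triangle inequality for $d_S$ gives $\limsup_{k\to\infty} d_S(\gamma^{(k)},\tw\gamma^{(k)})\le d_S(\gamma,\tw\gamma)$.

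Next I would show $d_S(\A^{(k)},\A)\to0$ and $d_S(\tw\A^{(k)},\tw\A)\to0$. Because each $\gamma^{(k)}$ is valued in $M_R$, Lemma~\ref{lemma:finite-Y-est} applied to the pair $\gamma^{(j)},\gamma^{(k)}$ yields, for every $\bv_0\in\tngt_{\gamma(0)}M$,
\[
\|\bv^{(j)}-\bv^{(k)}\|_\infty\le C(1+2R)\,\|\gamma^{(j)}-\gamma^{(k)}\|_\infty\,|\bv_0|,
\]
with $C$ depending only on $M$ and $T$; dividing by $|\bv_0|$, taking the supremum over unit tangent vectors, and using $\A^{(k)}(t)\n_{\gamma(0)}=0$, this shows $\{\A^{(k)}\}$ is uniformly Cauchy. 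Its uniform limit must be $\A$, since for each fixed $\bv_0$ the proof of Theorem~\ref{thm:existence-uniqueness} identifies $\lim_k\bv^{(k)}$ with the unique solution for $\gamma$. Hence $d_S(\A^{(k)},\A)\le\|\A^{(k)}-\A\|_\infty\to0$, and the same argument applies to $\tw\A^{(k)}$.

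Finally I would combine these. For each $k$, the triangle inequality for $d_S$ together with Lemma~\ref{lemma:dS-estimate} gives
\[
d_S(\A,\tw\A)\le d_S(\A,\A^{(k)})+C\,d_S(\gamma^{(k)},\tw\gamma^{(k)})+d_S(\tw\A^{(k)},\tw\A),
\]
and letting $k\to\infty$, using the two limits just established and $\limsup_k d_S(\gamma^{(k)},\tw\gamma^{(k)})\le d_S(\gamma,\tw\gamma)$, yields $d_S(\A,\tw\A)\le C\,d_S(\gamma,\tw\gamma)$. The approximation and the convergence bookkeeping are routine; the one point that needs care is the \emph{uniformity} of $C$. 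One must check that the constant produced in Lemma~\ref{lemma:dS-estimate} (and, through its proof, in Lemmas~\ref{lemma:finite-B-est} and~\ref{lemma:lambda-estimate}, hence in Lemma~\ref{lemma:finite-Y-est}) depends on the total‑variation bounds only through an upper bound, so that it does not degrade along the sequence and can be taken equal to a single $C=C(M,T,R)$ once $\|d\gamma^{(k)}\|,\|d\tw\gamma^{(k)}\|\le R$. This is precisely why it is essential that the finite approximations inherit the bound $\|d\gamma^{(k)}\|\le R$ rather than merely converging uniformly.
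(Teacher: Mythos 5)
Your proposal is correct and follows essentially the same route as the paper: approximate $\gamma$ and $\tw\gamma$ by finite trajectories whose solution operators converge uniformly (exactly as in the proof of Theorem~\ref{thm:existence-uniqueness}), apply the finite-trajectory estimate of Lemma~\ref{lemma:dS-estimate}, and conclude by the triangle inequality for $d_S$, with the constant uniform because the approximations inherit the total-variation bound $R$. The only difference is cosmetic (sequences and a limit versus the paper's ``fix $\epsilon$, let $\epsilon\to 0$'' phrasing), and your explicit remarks about the common starting point and the uniformity of the constant are points the paper leaves implicit.
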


\begin{proof}
By the argument in the proof of Theorem
\ref{thm:existence-uniqueness}, there exist sequences of finite
trajectories converging uniformly to $\gamma $ and $\tw \gamma
$ whose solution operators converge uniformly to $\A$ and $\tw
\A$, respectively. Thus for any $\epsilon>0$, we can choose
finite trajectories $\gamma '$ and $\tw \gamma '$, with
corresponding solution operators $\A'$ and $\tw \A'$, such that
\begin{align*}
\|\gamma '-\gamma \|_\infty&<\epsilon, &\|\tw \gamma '-\tw \gamma \|_\infty&<\epsilon,\\
\|\A'-\A\|_\infty&<\epsilon, &\|\tw \A'-\tw \A\|_\infty&<\epsilon.
\end{align*}
Then by the triangle inequality,
\begin{displaymath}
d_S(\gamma ',\tw \gamma ') \le d_S(\gamma ',\gamma ) +
d_S(\gamma ,\tw \gamma ) + d_S(\tw \gamma ,\tw \gamma ') <
d_S(\gamma ,\tw \gamma ) + 2\epsilon.
\end{displaymath}
By Lemma
\ref{lemma:dS-estimate}, we have
\begin{align*}
d_S(\A',\tw \A')&\le Cd_S(\gamma ',\tw \gamma ') \le Cd_S(\gamma ,\tw \gamma ) + 2C\epsilon.
\end{align*}
Thus by the triangle inequality once more,
\begin{align*}
d_S(\A,\tw \A)
&\le d_S(\A,\A') + d_S(\A',\tw \A') + d_S(\tw \A',\tw \A)\\
&\le \epsilon + (Cd_S(\gamma ,\tw \gamma ) + 2C\epsilon) + \epsilon.
\end{align*}
Letting $\epsilon\to 0$ completes the proof.
\end{proof}

\subsection{Base trajectories of infinite variation}

In the probabilistic context, we will have to analyze the
situation when the base trajectory $\gamma $ does not have
finite variation on finite intervals. We will now present an
example showing that some of the results proved in this section
do not extend to (all) functions $\gamma$ of infinite
variation. Hence, arguments using piecewise-constant
approximations in the probabilistic context will require some
modification of our techniques.

\begin{example}
Let $M\subset \R^2$ be the parabola $M= \{(x_1, x_2) \in \R^2:
x_2 = x_1^2\}$, with the orientation of $M$ chosen so that
$\|\sh_x\| < 1$ for all $x\in M$. Let $\gamma(t) = (0,0)$ for
$t\in [0,1]$, and for even integers $j\geq 2$, let
\begin{equation*}
\gamma_j(t) =
    \left\{
      \begin{array}{ll}
        x_j:=(j^{-1}, j^{-2}), & \hbox{for $t\in[2kj^{-3}, (2k+1)j^{-3})$, $k=0,1,\dots,j^3/2-1$,} \\
        y_j:=(-j^{-1}, j^{-2}), & \hbox{for $t\in[(2k+1)j^{-3}, (2k+2)j^{-3})$, $k=0,1,\dots,j^3/2-1$,} \\
        (j^{-1}, j^{-2}), & \hbox{for $t=1$.}
      \end{array}
    \right.
\end{equation*}
Clearly, $\gamma_j \to \gamma$ in the supremum norm on $[0,1]$,
so $d_S(\gamma_j, \gamma) \to 0$. Let $\bv_0 = (1,0)$ and let
$\bv_j(t)$ be defined as in (\ref{eq:finite-Y}), relative to
$\gamma_j$. Similarly, let $\bv(t)$ be defined by
(\ref{eq:finite-Y}) relative to $\gamma$. We have $\bv(1) =
e^{\sh_{(0,0)}} \bv_0 \ne (0,0)$.

There exists $c_1>0$ such that for all $j\geq 2$, $\bz \in
\tngt_{x_j} M$, we have $|\pi_{y_j} \bz| \leq (1- c_1 j^{-2})
|\bz|$, and similarly, $|\pi_{x_j} \bz| \leq (1- c_1 j^{-2})
|\bz|$, for $\bz \in \tngt_{y_j} M$. This implies that for some
$c_2 < 1$, $|\bv_j(1)| = | (\pi_{x_j} \circ \pi_{y_j})^{j^3/2}
\bv_0| \leq c_2 ^j$. Hence, $\lim_{j\to \infty} \bv_j(1) =
(0,0) \ne \bv(1)$. This shows that results such as Lemma
\ref{lemma:finite-Y-est} do not hold for (some) functions
$\gamma$ which do not have bounded variation.
\end{example}

\section{Multiplicative functional for reflected Brownian motion}\label{section:diff}

\bigskip

Suppose $D\subset\R^n$, $n\geq 2$, is an open connected bounded
set with $C^2$ boundary. Recall that $\n (x)$ denotes the unit
inward normal vector at $x\in\prt D$. Let $B$ be standard
$d$-dimensional Brownian motion, $x_* \in \ol D$, and consider
the following Skorokhod equation,
\begin{equation}
 X_t = x_* + B_t + \int_0^t  \n (X_s) dL_s,
 \qquad \hbox{for } t\geq 0. \label{old1.1}
\end{equation}
Here $L$ is the local time of $X$ on $\prt D$. In other words,
$L$ is a non-decreasing continuous process which does not
increase when $X$ is in $D$, i.e., $\int_0^\infty
\bone_{D}(X_t) dL_t = 0$, a.s. Equation (\ref{old1.1}) has a
unique pathwise solution $(X,L)$ such that $X_t \in \ol D$ for
all $t\geq 0$ (see \cite{LS}).

We need an extra ``cemetery point'' $\Delta$ outside $\R^n$, so
that we can send processes killed at a finite time to $\Delta$.
Excursions of $X$ from $\prt D$ will be denoted $e$ or $e_s$,
i.e., if $s< u$, $X_s,X_u\in\prt D$, and $X_t \notin \prt D$
for $t\in(s,u)$ then $e_s = \{e_s(t) = X_{t+s} ,\,
t\in[0,u-s)\}$. Let $\zeta(e_s) = u -s$ be the lifetime of
$e_s$. By convention, $e_s(t) = \Delta$ for $t\geq \zeta$, so
$e_t \equiv \Delta$ if $\inf\{s> t: X_s \in \prt D\} = t$.

Let $\sigma$ be the inverse local time, i.e., $\sigma_t =
\inf\{s \geq 0: L_s \geq t\}$, and $\E_r = \{e_s: s \leq
\sigma_r\}$. Fix some $r,\eps >0$ and let $\{e_{t_1}, e_{t_2},
\dots, e_{t_m}\}$ be the set of all excursions $e\in \E_r$ with
$|e(0) -e(\zeta-)| \geq \eps$. We assume that excursions are
labeled so that $t_k < t_{k+1}$ for all $k$ and we let $\ell_k
= L_{t_k}$ for $k=1,\dots, m$. We also let $t_0 =\inf\{t\geq 0:
X_t \in \prt D\}$, $\ell_0 =0 $, $\ell_{m+1} = r$, and $\Delta
\ell_k = \ell_{k+1} - \ell_k$. Let $x_k = e_{t_k}(\zeta-)$ for
$k=1,\dots, m$, and let $x_0=X_{t_0}$.

In this section, the boundary of $D$ will play the role of the
hypersurface $M$, i.e., $M=\prt D$. Recall that $\sh$ denotes
the shape operator and $\pi_x$ is the orthogonal projection on
the tangent space $\tngt_x \prt D$, for $x\in \prt D$. For
$\bv_0\in\R^n$, let
\begin{equation}\label{def:vr}
\bv_{r,\eps} =
\exp(\Delta\ell_m \sh(x_m)) \pi_{x_m}
\cdots
\exp(\Delta\ell_1 \sh(x_1)) \pi_{x_1}
\exp(\Delta \ell_0 \sh(x_0)) \pi_{x_0} \bv_0.
\end{equation}
Let $\A_{r,\eps}$ be a linear mapping defined by $\bv_{r,\eps}
= \A_{r,\eps} \bv_0$.

We point out that the ``multiplicative functional'' $\tw \A_t$
discussed in the the Introduction is not the same as $\A_r$
defined in this section. Intuitively speaking, $\A_r = \tw
\A_{\sigma_r}$, although we have not defined $\tw \A_t$ in a
formal way.

Suppose that $\prt D$ contains $n$ non-degenerate
$(n-1)$-dimensional spheres, such that vectors perpendicular to
these spheres are orthogonal to each other. If the trajectory
$\{X_t, 0\leq t \leq r\}$ visits the $n$ spheres and no other
part of $\prt D$, then it is easy to see that $\A_{r,\eps} = 0$
for small $\eps>0$. To avoid this uninteresting situation, we
impose the following assumption on $D$.

\begin{assumption}\label{a:A1}
For every $x\in \prt D$, the $(n-1)$-dimensional surface area
measure of $\{y\in \prt D: \<\n(y),\n(x)\> =0\}$ is zero.
\end{assumption}

\begin{theorem}\label{thm:diffskor}

Suppose that Assumption \ref{a:A1} holds. With probability 1,
for every $r>0$, the limit $\A_r := \lim_{\eps\to 0}
\A_{r,\eps}$ exists and it is a linear mapping of rank $n-1$.
For any $\bv_0$, with probability 1, $\A_{r,\eps}\bv_0\to \A_r
\bv_0$ uniformly on compact sets.
\end{theorem}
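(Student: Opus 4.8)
The plan is to combine the excursion-theoretic structure of reflected Brownian motion with the deterministic estimates from Section~\ref{section:determ}, treating the countably many ``large'' excursions (those with displacement $\geq\eps$) as the points of a finite trajectory and the accumulating ``small'' excursions as a perturbation that is controlled in the Skorokhod metric. The key point is that $\A_{r,\eps}$ is exactly the solution operator (in the sense of the ``Analytic Preliminaries'') associated with the finite trajectory in $M=\prt D$ whose values are $x_0,x_1,\dots,x_m$, with holding local-times $\Delta\ell_0,\dots,\Delta\ell_m$; so the deterministic machinery applies verbatim once we verify that, almost surely, these finite trajectories converge as $\eps\to0$ to an $\NBV$ trajectory $\gamma$ (or at least Cauchy in an appropriate sense) with total variation bounded on compact $L$-intervals.

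\textbf{Step 1: Finite total variation.} First I would show that, almost surely, for every $r>0$, $\sum_{e\in\E_r}|e(0)-e(\zeta-)|<\infty$. This follows from It\^o's excursion theory: the excursion point process has intensity $dt\otimes n(de)$ for the excursion measure $n$, and by standard estimates for Brownian excursions from a $C^2$ boundary one has $\int |e(0)-e(\zeta-)|\,n(de)<\infty$ (the displacement of an excursion is comparable to the square root of its lifetime times a bounded factor, and $\int(\zeta\wedge 1)^{1/2}n(de)<\infty$ since $n(\zeta>s)\asymp s^{-1/2}$). Hence $\E\sum_{e\in\E_r}|e(0)-e(\zeta-)|=r\int|e(0)-e(\zeta-)|\,n(de)<\infty$. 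Consequently, ordering the large excursions and defining a piecewise-constant trajectory $\gamma^{(\eps)}$ on the local-time scale, these trajectories have total variation uniformly bounded (by the above sum plus $o(1)$) and form a Cauchy family in the supremum norm as $\eps\downarrow0$: removing excursions with displacement in $[\eps',\eps)$ changes the trajectory by at most $\sum_{\eps'\le|e(0)-e(\zeta-)|<\eps}|e(0)-e(\zeta-)|\to0$. So there is a limiting $\NBV$ trajectory $\gamma$ on $[0,r]$ (local-time parametrization), and $\gamma^{(\eps)}\to\gamma$ uniformly with $\|d\gamma^{(\eps)}\|\le\|d\gamma\|+o(1)\le R<\infty$.

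\textbf{Step 2: Convergence of $\A_{r,\eps}$.} Apply Lemma~\ref{lemma:finite-Y-est} (or rather its operator form, Lemma~\ref{lemma:finite-B-est}): since all the $\gamma^{(\eps)}$ start at $x_0$ and have total variation $\le R$, $\|\A_{r,\eps}-\A_{r,\eps'}\|_\infty\le C(M,r,R)\,\|\gamma^{(\eps)}-\gamma^{(\eps')}\|_\infty\to0$. Hence $\A_{r,\eps}$ converges (uniformly on compact sets in $r$, using the uniform-in-$r$ bound from Step~1 up to a fixed horizon) to a limit $\A_r$, which by Theorem~\ref{thm:existence-uniqueness} is the solution operator of the measure-valued ODE \eqref{eq:ODE} driven by $\gamma$; in particular $\A_r\bv_0$ is the $\NBV$ solution $\bv(r)$, and $\A_{r,\eps}\bv_0\to\A_r\bv_0$ uniformly on compacts for each fixed $\bv_0$. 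The estimate $|e^{t\sh(x)}\bz|\ge e^{-Kt}|\bz|$ from \eqref{eq:e^S-estlow} shows $\A_r$ is injective on $\tngt_{\gamma(0)}M$ provided the projections $\pi_{x_k}$ do not kill anything essential — but of course each $\pi_{x_k}$ has a kernel, and here is the crux.

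\textbf{Step 3: The rank is exactly $n-1$ — the main obstacle.} Each $\A_{r,\eps}$ has rank $\le n-1$ because the first factor is $\pi_{x_0}$, and it could a priori have smaller rank (as the parabola example and the ``$n$ orthogonal spheres'' remark show). The hard part is proving that, almost surely and for \emph{every} $r>0$, $\A_r$ has full rank $n-1$, i.e.\ $\bv(r)\ne0$ whenever $\bv_0\in\tngt_{\gamma(0)}M\setminus\{0\}$. My plan: first establish a \emph{lower} bound analogous to the upper bound $|\bv(t)|\le e^{Ct}|\bv_0|$, of the form $|\bv(t)|\ge\exp(-Ct-C\sum\text{(angle defects)})|\bv_0|$, where the ``angle defect'' at the $k$-th jump is $\tfrac12|\n(x_{k-1})-\n(x_k)|^2$ (this is where Lemma~\ref{lemma:pipi-pipi}, or rather the identity $\<\n(x),\n(y)\>=1-\tfrac12|\n(x)-\n(y)|^2$, enters: a tangent vector at $x_{k-1}$ projected to $\tngt_{x_k}M$ loses at most a factor $1-\tfrac12|\n(x_{k-1})-\n(x_k)|^2$ in norm unless it is nearly normal to $M$ at $x_k$). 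Combined with Step~1's summability $\sum_k|x_{k-1}-x_k|<\infty$, which via \eqref{eq:N-Lip-est} gives $\sum_k|\n(x_{k-1})-\n(x_k)|^2<\infty$, this yields $|\bv(t)|\ge c(\omega)|\bv_0|>0$ for all $t\le r$ — \emph{but only for those initial vectors whose propagated images never align with a normal direction}. To rule out alignment, one uses Assumption~\ref{a:A1}: the set of boundary points $y$ with $\n(y)\perp\n(x)$ has zero surface measure, so by the strong Markov property and the fact that the excursion endpoints $x_k$ form (conditionally on $L$) a distribution absolutely continuous with respect to harmonic measure on $\prt D$, almost surely no $x_k$ lies on the ``bad'' hypersurface relative to the current propagated frame; more carefully, one shows that the accumulated loss $\prod_k(1-\tfrac12|\n(x_{k-1})-\n(x_k)|^2)$ along the trajectory stays bounded below because the excursion endpoints do not cluster on any fixed $\<\n(\cdot),\bw\>=0$ set with positive probability — this is precisely what Assumption~\ref{a:A1} prevents. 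Making this last point rigorous is the delicate part; I would handle it by an excursion-measure computation showing that $\int_{\prt D}\mathbf 1\{|\<\n(y),\bw\>|<\eta\}\,\mu(dy)\to0$ as $\eta\to0$ uniformly in the relevant harmonic-type measures $\mu$, then a Borel--Cantelli / martingale argument over a countable dense set of directions $\bw$ and rationals $r$, finally upgrading to all $r$ by the already-established uniform (in $r$) continuity of $r\mapsto\A_r$. The ``uniformly on compact sets'' statement for $\A_{r,\eps}\bv_0$ then follows from the uniform-in-$r\le R_0$ version of the Step~2 estimate together with continuity of $r\mapsto\bv(r)$ at its (countably many) jump times.
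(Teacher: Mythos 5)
The decisive gap is in your Step 1: the claim that almost surely $\sum_{e\in\E_r}|e(0)-e(\zeta-)|<\infty$ is false, and everything downstream leans on it. By the standard excursion-law estimate \eqref{eq:H1}, $H^x\bigl(|e(\zeta-)-x|>a\bigr)\asymp 1/a$, so (via the exit-system formula \eqref{old4.1}) the expected number of excursions up to local time $r$ with displacement in $[2^{-i-1},2^{-i}]$ is of order $2^i$; each dyadic block therefore contributes expected total displacement of order one, and summing over scales gives $\bE\sum_{e\in\E_r}|e(0)-e(\zeta-)|=\infty$ (and the sum is a.s.\ infinite). Your own heuristic already shows this if carried out: with $n(\zeta>s)\asymp s^{-1/2}$ one gets $\int(\zeta\wedge 1)^{1/2}\,n(de)\asymp\int_0^1 s^{-1}\,ds=\infty$, not a finite quantity. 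Consequently the local-time trajectory $\gamma(t)=X(\sigma_t)$ is \emph{not} in $\NBV$ (the paper states this explicitly in the remark following the theorem), the approximations $\gamma^{(\eps)}$ have total variation diverging like $|\log\eps|$ as $\eps\downarrow0$, and the constant in Lemma \ref{lemma:finite-Y-est} (hence in your Step 2 bound $\|\A_{r,\eps}-\A_{r,\eps'}\|_\infty\le C\|\gamma^{(\eps)}-\gamma^{(\eps')}\|_\infty$) blows up; Theorems \ref{thm:existence-uniqueness} and \ref{thm:stability} are likewise unavailable. The parabola example at the end of Section 2 exists precisely to show that this deterministic reduction can genuinely fail for trajectories of unbounded variation, so Steps 1--2 cannot be repaired by a purely pathwise argument.

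What the paper does instead is keep the telescoping-sum skeleton of Lemma \ref{lemma:finite-Y-est} but estimate it probabilistically: comparing dyadic levels $\eps_j=2^{-j}$ with $\eps'\in[\eps_{j+1},\eps_j)$, the error terms are products such as $|x_{i+1}-x_i|\,|x_i-\tw x_i|$, where $\max_i|x_i-\tw x_i|$ is shown to have expectation $O(\eps_j^{\beta})$ using \eqref{old4.1}, \eqref{eq:H1}, a geometric-random-variable domination of the number of small boundary displacements between consecutive large excursions, and Poisson tail bounds on excursion counts; the divergent factor $\sum_i|x_{i+1}-x_i|$ grows slowly enough that the product of the two is summable in $j$, and a.s.\ convergence follows from summability of these expectations. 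Your Step 3 has the right ingredients for the rank statement (the quadratic loss $\log|\pi_x\bz|\ge\log|\bz|-c|x-y|^2$ and Assumption \ref{a:A1} for the finitely many large jumps), and $\sum_k|x^j_k-x^j_{k+1}|^2$ does indeed stay controlled --- but not for the reason you give, since it cannot be deduced from summability of the first powers (which fails); it needs the separate excursion-count computation, and the ``alignment'' issue is resolved in the paper simply by noting that only the non-orthogonality of $\n$ at the two endpoints of each of the finitely many excursions exceeding a fixed threshold must be ensured, which Assumption \ref{a:A1} together with absolute continuity of the excursion hitting distribution gives directly.
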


\begin{remark}
Intuitively speaking, $\A_r\bv_0$ represents the solution to
the following ODE, similar to (\ref{eq:ODE}). Let $\gamma(t) =
X(\sigma_t)$, and suppose that $\bv_0 \in \R^n$. Consider the
following ODE,
\begin{equation*}
\cD \bv  = (\sh \circ \gamma ) \bv \, dt, \qquad
\bv (0) = \pi_{x_0}\bv _0.
\end{equation*}
Then $\A_r$ is defined by $\bv(r)= \A_r\bv_0$. We cannot use
Theorem \ref{thm:existence-uniqueness} to justify this
definition of $\A_r$ because $\gamma \notin \NBV([0,r];\prt
D)$. See \cite{A}, \cite{IKpaper} or \cite{H} for various
versions of the above claim with rigorous proofs. Those papers
also contain proofs of the fact that $\A_r$ is a multiplicative
functional of reflected Brownian motion. This last claim
follows directly from our definition of $\A_r$.

\end{remark}

\begin{remark}
Recall that $B$ is standard $d$-dimensional Brownian motion and
consider the following stochastic flow,
\begin{equation}
 X_t^x = x + B_t + \int_0^t  \n (X^x_s) dL^x_s,
 \qquad \hbox{for } t\geq 0, \label{old1.1new}
\end{equation}
where $L^x$ is the local time of $X^x$ on $\prt D$. The results
in \cite{LS} are deterministic in nature, so with probability
1, for all $x\in \ol D$ simultaneously, (\ref{old1.1new}) has a
unique pathwise solution $(X^x,L^x)$. In a forthcoming paper,
we will prove that for every $r>0$, a.s., $\lim_{\eps\to0}
\sup_{\bv: |\bv| \leq 1} \left| (X^{x_0 + \eps \bv} _{\sigma_r}
- X^{x_0}_{\sigma_r})/\eps - \A_r \bv\right| =0$.

\end{remark}

The rest of this section is devoted to the proof of Theorem
\ref{thm:diffskor}. We precede the actual proof with a short
review of the excursion theory. See, e.g., \cite{M} for the
foundations of the theory in the abstract setting and \cite{Bu}
for the special case of excursions of Brownian motion. Although
\cite{Bu} does not discuss reflected Brownian motion, all
results we need from that book readily apply in the present
context.

An ``exit system'' for excursions of the reflected Brownian
motion $X$ from $\prt D$ is a pair $(L^*_t, H^x)$ consisting of
a positive continuous additive functional $L^*_t$ and a family
of ``excursion laws'' $\{H^x\}_{x\in\prt D}$. In fact, $L^*_t =
L_t$; see, e.g., \cite{BCJ}. Recall that $\Delta$ denotes the
``cemetery'' point outside $\R^n$ and let ${\mathcal C}$ be the
space of all functions $f:[0,\infty) \to \R^n\cup\{\Delta\}$
which are continuous and take values in $\R^n$ on some interval
$[0,\zeta)$, and are equal to $\Delta$ on $[\zeta,\infty)$. For
$x\in \prt D$, the excursion law $H^x$ is a $\sigma$-finite
(positive) measure on $\mathcal C$, such that the canonical
process is strong Markov on $(t_0,\infty)$, for every $t_0>0$,
with transition probabilities of Brownian motion killed upon
hitting $\prt D$. Moreover, $H^x$ gives zero mass to paths
which do not start from $x$. We will be concerned only with
``standard'' excursion laws; see Definition 3.2 of \cite{Bu}.
For every $x\in \prt D$ there exists a unique standard
excursion law $H^x$ in $D$, up to a multiplicative constant.

Recall that excursions of $X$ from $\prt D$ are denoted $e$ or
$e_s$, i.e., if $s< u$, $X_s,X_u\in\prt D$, and $X_t \notin
\prt D$ for $t\in(s,u)$ then $e_s = \{e_s(t) = X_{t+s} ,\,
t\in[0,u-s)\}$ and $\zeta(e_s) = u -s$. By convention, $e_s(t)
= \Delta$ for $t\geq \zeta$, so $e_t \equiv \Delta$ if
$\inf\{s> t: X_s \in \prt D\} = t$.

Recall that $\sigma_t = \inf\{s\geq 0: L_s \geq t\}$ and let
$I$ be the set of left endpoints of all connected components of
$(0, \infty)\setminus \{t\geq 0: X_t\in \partial D\}$. The
following is a special case of the exit system formula of
\cite{M},
\begin{equation}
\bE \left[ \sum_{t\in I} V_t \cdot f ( e_t)
\right] = \bE \int_0^\infty V_{\sigma_s}
 H^{X(\sigma_s)}(f) ds = \bE \int_0^\infty V_t H^{X_t}(f) dL_t,
 \label{old4.1}
\end{equation}
where $V_t$ is a predictable process and $f:\, {\mathcal
C}\to[0,\infty)$ is a universally measurable function which
vanishes on excursions $e_t$ identically equal to $\Delta$.
Here and elsewhere $H^x(f) = \int_{\mathcal C} f dH^x$.

The normalization of the exit system is somewhat arbitrary, for
example, if $(L_t, H^x)$ is an exit system and $c\in(0,\infty)$
is a constant then $(cL_t, (1/c)H^x)$ is also an exit system.
Let $\bP^y_D$ denote the distribution of Brownian motion
starting from $y$ and killed upon exiting $D$. Theorem 7.2 of
\cite{Bu} shows how to choose a ``canonical'' exit system; that
theorem is stated for the usual planar Brownian motion but it
is easy to check that both the statement and the proof apply to
the reflected Brownian motion in $\R^n$. According to that
result, we can take $L^*_t$ to be the continuous additive
functional whose Revuz measure is a constant multiple of the
surface area measure on $\prt D$ and $H^x$'s to be standard
excursion laws normalized so that
\begin{equation}
H^x (A) =
\lim_{\delta\downarrow 0} \frac1{ \delta} \,\bP_D^{x +
 \delta\n(x)} (A),\label{old4.2}
\end{equation}
for any event $A$ in a $\sigma$-field generated by the process
on an interval $[t_0,\infty)$, for any $t_0>0$. The Revuz
measure of $L$ is the measure $dx/(2|D|)$ on $\prt D$, i.e., if
the initial distribution of $X$ is the uniform probability
measure $\mu$ in $D$ then $\bE^\mu \int_0^1 \bone_A (X_s) dL_s
= \int_A dx/(2|D|)$ for any Borel set $A\subset \prt D$, see
Example 5.2.2 of \cite{FOT}. It has been shown in \cite{BCJ}
that $(L^*_t, H^x)=(L_t, H^x)$ is an exit system for $X$ in
$D$, assuming the above normalization.

\begin{proof}[Proof of Theorem \ref{thm:diffskor}]

The overall structure of our argument will be similar to that in the proof of Lemma \ref{lemma:finite-Y-est}.

We will first consider the case $r=1$. Let $\eps_j = 2^{-j}$,
for $j\geq 1$. Fix some $j$ for now and suppose that $\eps'
\in[\eps_{j+1}, \eps_j)$. Let
\begin{align*}
\left\{e_{t^j_1}, e_{t^j_2}, \dots,
e_{t^j_{m_j}}\right\}
&=\{e\in \E_1: |e(0) - e(\zeta-)| \geq \eps_j\}, \\
\left\{e_{t'_1}, e_{t'_2}, \dots, e_{t'_{m'}}\right\}
&=\{e\in \E_1: |e(0) - e(\zeta-)| \geq \eps'\} .
\end{align*}
We label the excursions so that $t^j_k < t^j_{k+1}$ for all $k$
and we let $\ell^j_k = L_{t^j_k}$ for $k=1,\dots, m_j$.
Similarly, $t'_k < t'_{k+1}$ for all $k$ and $\ell'_k =
L_{t'_k}$ for $k=1,\dots, m'$. We also let $t^j_0 = t'_0 =
\inf\{t\geq 0: X_t \in \prt D\}$, $\ell^j_0 = \ell'_0 =0 $,
$\ell^j_{m_j+1} = \ell'_{m'+1}= 1$, $\Delta \ell^j_k =
\ell^j_{k+1} - \ell^j_k$, and $\Delta \ell'_k = \ell'_{k+1} -
\ell'_k$. Let $x^j_k = e_{t^j_k}(\zeta-)$ for $k=1,\dots, m_j$,
and $x'_k = e_{t'_k}(\zeta-)$ for $k=1,\dots, m'$. Let
$x^j_0=X_{t^j_0}$, and $x'_0=X_{t'_0}$.

Let $\gamma^j(s) = x^j_k$ for $s\in[\ell^j_k, \ell^j_{k+1})$
and $k=0,1,\dots, m_j$, and $\gamma^j(1) =
\gamma^j(\ell^j_{m_j})$. Let $\gamma'(s) = x'_k$ for
$s\in[\ell'_k, \ell'_{k+1})$ and $k=0,1,\dots, m'$, and
$\gamma'(1) = \gamma'(\ell'_{m'})$.

For $\bv_0\in\R^n$, let
\begin{align*}
\bv^j &=
\exp(\Delta\ell^j_{m_j} \sh(x^j_{m_j})) \pi_{x^j_{m_j}}
\cdots
\exp(\Delta\ell^j_1 \sh(x^j_1)) \pi_{x^j_1}
\exp(\Delta \ell^j_0 \sh(x^j_0)) \pi_{x^j_0} \bv_0, \\
\bv' &=
\exp(\Delta\ell'_{m'} \sh(x'_{m'})) \pi_{x'_{m'}}
\cdots
\exp(\Delta\ell'_1 \sh(x'_1)) \pi_{x'_1}
\exp(\Delta \ell'_0 \sh(x'_0)) \pi_{x'_0} \bv_0.
\end{align*}

Let $0=\ell_0<\dots<\ell_{m+1} = 1$ denote the ordered set of
all $\ell^j_k$'s, $0\leq k \leq m_j+1$, and $\ell'_k$'s, $0\leq
k \leq m'+1$. In the definition of $\ell_k$'s, we followed the
proof of Lemma \ref{lemma:finite-Y-est} word by word, for
conceptual consistency, although the set of $\ell_k$'s is the
same as the set of $\ell'_k$'s.

We introduce the following shorthand notations, $\Delta_i =
\ell_{i+1} - \ell_i$,
\begin{align*}
x_i &= \gamma^j(\ell_i), &
\tw x_i &=  \gamma' (\ell_i),\\
\sh _i &= \sh (x_i),&
\tw \sh _i &= \sh (\tw x_i),\\
\pi_i &= \pi_{x_i},&
\tw\pi_i &= \pi_{\tw x_i}.
\end{align*}

Observing that $\pi_0\tw \pi_{0} \bv _0=\tw \pi_{0} \bv _0$ and
$\tw \pi_{m+1} \bv' = \bv'$, we can write $\bv^j - \bv'$ as a
telescoping sum:
\begin{equation*}
\bv^j - \bv ' =  \sum_{i=0}^{m}   e^{\Delta_{m} \sh _{m}}
\pi_m \cdots e^{\Delta_{i+1} \sh _{i+1}}  \pi_{i+1} \left( e^{\Delta_{i}
\sh _{i}} \pi_{i} - \tw \pi_{i+1} e^{\Delta_{i} \tw \sh _{i}}
\right) \tw \pi_{i} \cdots e^{\Delta_1 \tw \sh _{1}} \tw \pi_{1}
e^{\Delta_0 \tw \sh _{0}} \tw \pi_{0} \bv _0.
\end{equation*}
By \eqref{eq:comp-est}, the compositions of operators before
and after the parentheses in the summation above are uniformly
bounded in operator norm by a constant. Therefore, for some
$c_1$ depending only on $D$,
\begin{equation}\label{eq:vminv}
|\bv^j - \bv'| \le c_1 \sum_{i=0}^{m}\left\| \pi_{i+1}\circ
\left( e^{\Delta_{i}  \sh _{i}} \circ\pi_{i} - \tw \pi_{i+1}
\circ e^{\Delta_{i} \tw \sh _{i}} \right) \circ\tw \pi_{i}
\right\|\, |\bv _0|.
\end{equation}

Using the fact that $\sh _{i}$ and $\pi_{i}$ commute, as do
$\tw \sh _i$ and $\tw\pi_i$, we decompose the middle factors as
follows:
\begin{align}\label{eq:decom}
\pi_{i+1}\circ
\left(
e^{\Delta_{i}  \sh _{i}} \circ\pi_{i} -
\tw \pi_{i+1} \circ e^{\Delta_{i} \tw \sh _{i}}
\right) \circ\tw \pi_{i}
&=
\pi_{i+1}\circ\pi_i \circ
\left(
e^{\Delta_i \sh _i} - e^{\Delta_i \tw \sh _i}
\right) \circ
\tw \pi_i\\
&\quad
+ \pi_{i+1} \circ
\left(
\pi_i - \tw\pi_{i+1}
\right)
\circ \tw \pi_i
\circ e^{\Delta_i\tw \sh _i} . \nonumber
\end{align}
We will deal with each of these terms separately.

For the first term, we have by (\ref{eq:e^S-est3}),
\begin{equation}\label{eq:1stterm}
 \left\|\pi_{i+1}\circ\pi_i \circ \left( e^{\Delta_i \sh _i} - e^{\Delta_i
 \tw \sh _i} \right) \circ \tw \pi_i \right\|
 \leq
 \left\| e^{\Delta_i \sh _i} - e^{\Delta_i \tw \sh _i}
 \right\|
 \leq
 c_2 \Delta_i |x_i - \tw x_i|.
\end{equation}

For the second term, Lemma \ref{lemma:pipi-pipi} and
(\ref{eq:e^S-est}) allow us to conclude that
\begin{align}
\biggl\|
\pi_{i+1} \circ
\left(
\pi_i - \tw\pi_{i+1}
\right)
\circ \tw \pi_i
\circ e^{\Delta_i\tw \sh _i}
\biggr\|
&
\le
c_3 \left(
\left|x_{i+1} - x_i\right|
\left|x_i - \tw x_{i}\right|
+
\left|x_{i+1} - \tw x_{i+1}\right|
\left|\tw x_{i+1} - \tw x_{i}\right|
\right)
\, \left\|e^{\Delta_i\tw \sh _i}\right\| \nonumber \\
&\le
c_4 \left(
\left|x_{i+1} - x_i\right|
\left|x_i - \tw x_{i}\right|
+
\left|x_{i+1} - \tw x_{i+1}\right|
\left|\tw x_{i+1} - \tw x_{i}\right|
\right). \label{eq:2ndterm}
\end{align}

We will now estimate $\bE \sup_{0\leq i \leq m} |x_i - \tw
x_i|$. Suppose that $x_i \ne \tw x_i$ for some $i$. Then there
exist $k_1$ and $k_2$ such that
 $\ell^j_{k_1} < \ell'_{k_2} < \ell^j_{k_1+1}$,
$x_i = x^j_{k_1}$, and $\tw x_i = x'_{k_2}$. Hence,
\begin{equation}\label{eq:diff1}
\{|x_i - \tw x_i| > a \}
\subset \bigcup_k
\left\{\sup_{t^j_{k} + \zeta(e^j_{k}) < t < t^j_{k+1}, X_t \in \prt D}
|x^j_{k} - X_t| >a \right\}.
\end{equation}
Intuitively speaking, the last condition means that the process
$X$ deviates by more than $a$ units from $x^j_{k_1}$ (the right
endpoint of an excursion $e_{t^j_{k_1}}$), when $X$ is on the
boundary of $D$, at some time between the lifetime of this
excursion and the start of the next excursion in this family,
$e_{t^j_{k_1+1}}$.

Since $\prt D$ is $C^2$, standards estimates (see, e.g.,
\cite{Bu}) show that for some $a_0,c_5 >0$, all $x\in \prt D$
and $a\in(0,a_0)$,
\begin{equation}\label{eq:H1}
1/(c_5 a) \leq H^x\left(|e(\zeta-) - x| > a\right) \leq c_5/ a.
\end{equation}
It follows from this and (\ref{old4.1}) that there exists $c_6$
so large that for any stopping time $T$ and $a\in(0,a_0)$,
\begin{equation}\label{eq:H2}
\bP\left( \exists e_s: |e_s(\zeta-) - e_s(0)| > a,
s\in (T, \sigma(L_T + c_6a)) \right) \geq 3/4.
\end{equation}

Let $\tau_{\B(x,a)}$ be the exit time of $X$ from the ball
$\B(x,a)$ in $\R^n$ with center $x$ and radius $a$. Routine
estimates show that for some $c_7, a_1>0$, and all $a\in (0,
a_1)$ and $x\in \prt D$,
\begin{equation}\label{eq:local1}
\bP^x( L(\tau_{\B(x,c_7 a)}) > c_6 a) > 3/4.
\end{equation}

Let $T^j_{k,0} = t^j_k$, and
$$
T^j_{k,i+1} = \inf\{t \geq T^j_{k,i}: X(t) \in \prt D,
|X(t) - X(T^j_{k,i})| \geq c_7 \eps_j\},
\qquad i \geq 0.
$$
According to (\ref{eq:local1}), the amount of local time
generated on $(T^j_{k,0}, T^j_{k,1})$ will be greater than $c_6
\eps_j$ with probability greater than $3/4$. This and
(\ref{eq:H2}) imply that there exists an excursion $e_s$ with
$|e_s(\zeta-) - e_s(0)| > \eps_j$ and $s\in (T^j_{k,0},
T^j_{k,1})$, with probability greater than $1/2$. By the strong
Markov property, if there does not exist an excursion $e_s$
with $|e_s(\zeta-) - e_s(0)| > \eps_j$ and $s\in (T^j_{k,0},
T^j_{k,i})$ then there exists an excursion $e_s$ with
$|e_s(\zeta-) - e_s(0)| > \eps_j$ and $s\in (T^j_{k,i},
T^j_{k,i+1})$, with probability greater than $1/2$. Let $M^j_k$
be the smallest $i$ with the property that there exists an
excursion $e_s$ with $|e_s(\zeta-) - e_s(0)| > \eps_j$ and
$s\in (T^j_{k,i}, T^j_{k,i+1})$. We see that $M^j_k$ is
majorized by a geometric random variable $\tw M^j_k$ with mean
2. Note that
\begin{equation*}
|X(T^j_{k,i+1}) - X(T^j_{k,i})| \leq (c_7+1) \eps_j= c_8 \eps_j,
\end{equation*}
for $i <  M^j_k$. Therefore,
\begin{equation}\label{eq:xM}
    \sup_{t^j_{k} + \zeta(e^j_{k}) < t < t^j_{k+1}, X_t \in \prt D} |x^j_{k} -
X_t| \leq c_8 M^j_k\eps_j.
\end{equation}
It is easy to see, using the strong Markov property at the
stopping times $t^j_k$, that we can assume that all $\{\tw
M^j_k, k\geq 0\}$ are independent.

Consider an arbitrary $\beta_1< -1$ and let $n_j =
\eps_j^{\beta_1}$. For some $c_9>0$, not depending on $j$,
\begin{equation}\label{eq:geom}
\bP\left(\max_{1\leq k \leq n_j} c_8 \tw M^j_k \eps _j \geq c_8 i \eps_j\right)
= 1 - (1 - (1/2)^i)^{n_j}
\leq
\begin{cases}
1& \text{if $i \leq \beta_1 j$,} \\
c_9 n_j (1/2)^i& \text{ if $i > \beta_1 j$}.
\end{cases}
\end{equation}

Let $\rho_0$ be the diameter of $D$ and $j_1 $ be the largest
integer smaller than $\log \rho_0$. By (\ref{eq:geom}), for any
$\beta_2 < 1$, some $c_{12}<\infty$, and all $j\geq j_1$,
\begin{align}
\bE\left (\max_{1\leq k \leq n_j} c_8 M^j_k \eps _j \right)
&\leq
\bE\left (\max_{1\leq k \leq n_j} c_8 \tw M^j_k \eps _j \right)
\nonumber \\
&\leq
\sum_{i \leq \beta_1 j} c_8 i \eps_j + \sum_{i >  \beta_1 j}
c_8 i \eps_j
c_9 n_j (1/2)^i \nonumber \\
&\leq c_{10} \eps_j (\log \eps_j)^2
+ c_{11} \eps_j |\log \eps_j|
\leq c_{12} \eps_j^{\beta_2} .\label{eq:maxM}
\end{align}

Let $N_\eps$ be the number of excursions $e_s$ with $s \leq
\sigma_1$ and $|e_s(0) - e_s(\zeta-)| \geq \eps$. For $\eps =
\eps_j$, $N_\eps = m_j$. Then (\ref{old4.1}) and (\ref{eq:H1})
imply that $N_\eps$ is stochastically majorized by a Poisson
random variable $\tw N_\eps$ with mean $ c_{13} /\eps$, where
$c_{13}<\infty$ does not depend on $\eps>0$. We have $\bE
\exp(\tw N_\eps) = \exp(c_{13} \eps^{-1} (e-1))$, so for any
$a>0$,
\begin{equation*}
\bP(N_\eps \geq a)
\leq \bP(\tw N_\eps \geq a) =
\bP(\exp(\tw N_\eps) \geq \exp( a)) \leq
\exp(c_{14} \eps^{-1} - a).
\end{equation*}
Standard calculations yield the following estimates. For any
$\beta_3<-1$, $\beta_4 < 0$, $\delta_1>0$, some $ \delta_2
\in(0,\delta_1)$, and all $\delta_3,\delta_4 \in (0,\delta_2)$,
\begin{equation}\label{eq:LDP1}
\bP(N_{\delta_3} \geq  \delta_3^{\beta_3})
\leq \delta_3^2,
\end{equation}
and
\begin{equation}\label{eq:LDP2}
\sup_{\delta_4\leq \delta \leq \delta_1 }
 \bE\left(N_\delta
 \bone_{\left\{N_\delta\geq  \delta^{\beta_3} \delta_4^{\beta_4}\right\}}\right)
\leq \delta_4^2.
\end{equation}

It follows from (\ref{eq:xM}), (\ref{eq:maxM}) and
(\ref{eq:LDP1}) that, for any $\beta_2 < 1$, some $c_{16}$, and
$j\geq j_1$,
\begin{align}
\bE&\left( \max_{0 \leq k \leq m_j}
\sup_{t^j_{k} + \zeta(e^j_{k}) < t < t^j_{k+1}, X_t \in \prt D}
|x^j_{k} - X_t| \right) \nonumber \\
&\leq
\bE\left( \max_{0 \leq k \leq n_j}
\sup_{t^j_{k} + \zeta(e^j_{k}) < t < t^j_{k+1}, X_t \in \prt D}
|x^j_{k} - X_t| \right)
+ \rho_0 \bP(m_j \geq n_j) \nonumber \\
&\leq
\bE\left( \max_{0 \leq k \leq n_j}
c_8M^j_k\eps_j
\right)
+ c_{15} \eps_j^2 \nonumber \\
&\leq c_{12} \eps_j^{\beta_2} + c_{15} \eps_j^2 \leq c_{16}
\eps_j^{\beta_2}. \label{eq:X-M}
\end{align}
Note that $\sum_{i=0}^m  \Delta_i = 1$. This, (\ref{eq:X-M}),
(\ref{eq:1stterm}) and (\ref{eq:diff1}) imply that,
\begin{align}\label{eq:1stterm1}
\bE &\left( \sup_{\eps_{j+1} \leq \eps' < \eps_j}
 \sum_{i=0}^m
 \left\|\pi_{i+1}\circ\pi_i \circ \left( e^{\Delta_i \sh _i} - e^{\Delta_i
 \tw \sh _i} \right) \circ \tw \pi_i \right\| \right)
 \leq
 \bE\left( \sup_{\eps_{j+1} \leq \eps' < \eps_j}
 \sum_{i=0}^m c_2 \Delta_i |x_i - \tw x_i| \right) \\
 & \leq
 \bE\left( \sup_{\eps_{j+1} \leq \eps' < \eps_j}
 \max_{0 \leq i \leq m}
|x_i - \tw x_i| \sum_{i=0}^m c_2 \Delta_i  \right)
=
 c_2 \bE\left( \sup_{\eps_{j+1} \leq \eps' < \eps_j}
  \max_{0 \leq i \leq m}
|x_i - \tw x_i|  \right) \nonumber \\
 & \leq c_2
 \bE\left( \max_{0 \leq k \leq m_j}
\sup_{t^j_{k} + \zeta(e^j_{k}) < t < t^j_{k+1}, X_t \in \prt D}
|x^j_{k} - X_t| \right) \leq c_{16}
\eps_j^{\beta_2}. \nonumber
\end{align}

We will now estimate the right hand side of (\ref{eq:2ndterm}).
We start with an observation similar to (\ref{eq:diff1}).
Suppose that $x_i \ne  x_{i+1}$ for some $i$. Then there exists
$k_1$ such that $x_i = x^j_{k_1}$, and $ x_{i+1} =
x^{j}_{k_1+1}$. Note that $k_1$'s corresponding to distinct
$i$'s are distinct. Hence,
\begin{align}\label{eq:diam}
&\{|x_i -  x_{i+1}| > a\} \\
& \nonumber \subset
\bigcup_k \left\{ \sup_{t^j_{k} +
\zeta(e^j_{k}) < t < t^j_{k+1}, X_t \in \prt D} |x^j_{k} - X_t| >a /2
\right\}
\cup \left\{|e_{t^{j}_{k+1}}(0)- e_{t^{j}_{k+1}}(\zeta-)|
> a/2\right\} \\
& \nonumber \subset
\bigcup_k \left\{ \sup_{t^j_{k} +
\zeta(e^j_{k}) < t < t^j_{k+1}, X_t \in \prt D} |x^j_{k} - X_t| >a /2
\right\}
\cup \bigcup_k \left\{|e_{t^{j+1}_{k}}(0)- e_{t^{j+1}_{k}}(\zeta-)|
> a/2\right\}.
\end{align}
Similarly, suppose that $\tw x_i \ne  \tw x_{i+1}$ for some
$i$. Then there exists $k_2$ such that $\tw x_i = x'_{k_2}$,
and $ \tw x_{i+1} = x'_{k_2+1}$. Again, $k_2$'s corresponding
to distinct $i$'s are distinct. Hence,
\begin{equation*}
\{|\tw x_i - \tw x_{i+1}| > a \}
\subset \bigcup_k \left\{
\sup_{t'_{k} + \zeta(e'_{k}) < t < t'_{k+1}, X_t \in \prt D}
|x'_{k} - X_t| >a/2 \right\}
\cup
\left\{|e_{t'_{k+1}}(0), e_{t'_{k+1}}(\zeta-)|
> a/2
\right\}.
\end{equation*}
Since $\eps_{j+1} \leq \eps' < \eps_j$, this implies that,
\begin{align}\label{eq:diam1}
&\{|\tw x_i - \tw x_{i+1}| > a \} \\
&\subset
\bigcup_{0 \leq k \leq m_j}
 \left\{
\sup_{t^j_{k} + \zeta(e^j_{k}) < t < t^j_{k+1}, X_t \in \prt D}
|x^j_{k} - X_t| >a /2 \right\}
\cup \bigcup_{0 \leq k \leq m_{j+1}}
\left\{|e_{t^{j+1}_{k}}(0)- e_{t^{j+1}_{k}}(\zeta-)|
> a/2\right\}. \nonumber
\end{align}

It follows from (\ref{eq:diff1}), (\ref{eq:diam}) and
(\ref{eq:diam1}) that
\begin{align}
&\sup_{\eps_{j+1} \leq \eps' < \eps_j}
\sum_{0 \leq i \leq m} \left(
  \left|x_{i+1} - x_i\right| \left|x_i - \tw
x_{i}\right| + \left|x_{i+1} - \tw x_{i+1}\right| \left|\tw
x_{i+1} - \tw x_{i}\right|\right) \label{eq:xs} \\
&\leq 4 \sum_{0 \leq i \leq m}
 \left(\max_{0 \leq k \leq m_j}
 \sup_{t^j_{k} + \zeta(e^j_{k}) < t < t^j_{k+1}, X_t \in \prt D}
|x^j_{k} - X_t| \right)^2 \nonumber \\
& \qquad +
 8 \left(\max_{0 \leq k \leq m_j}
 \sup_{t^j_{k} + \zeta(e^j_{k}) < t < t^j_{k+1}, X_t \in \prt D}
|x^j_{k} - X_t| \right)
\left(\sum_{0 \leq k \leq m_{j+1}}
|e_{t^{j+1}_k}(0)- e_{t^{j+1}_k}(\zeta-)|
\right) \nonumber \\
&= 4 ( m+1)
 \left(\max_{0 \leq k \leq m_j}
 \sup_{t^j_{k} + \zeta(e^j_{k}) < t < t^j_{k+1}, X_t \in \prt D}
|x^j_{k} - X_t| \right)^2 \nonumber \\
& \qquad +
 8 \left(\max_{0 \leq k \leq m_j}
 \sup_{t^j_{k} + \zeta(e^j_{k}) < t < t^j_{k+1}, X_t \in \prt D}
|x^j_{k} - X_t| \right)
\left(\sum_{0 \leq k \leq m_{j+1}}
|e_{t^{j+1}_k}(0)- e_{t^{j+1}_k}(\zeta-)|
\right). \nonumber
 \end{align}

We have the following estimate, similar to (\ref{eq:maxM}). For
any $\beta_5 < 2$, some $c_{19}<\infty$, and $j\geq j_1$,
\begin{align}
\bE\left (\max_{1\leq k \leq n_j} c_8 M^j_k \eps _j \right)^2
&\leq
\bE\left (\max_{1\leq k \leq n_j} c_8 \tw M^j_k \eps _j \right)^2
\nonumber \\
&\leq
\sum_{i \leq \beta_1 j} (c_8 i \eps_j)^2 + \sum_{i >  \beta_1 j}
(c_8 i \eps_j)^2
c_9 n_j (1/2)^i \nonumber \\
&\leq c_{17} \eps_j^2 |\log \eps_j|^3
+ c_{18} \eps_j^2 (\log \eps_j)^2
\leq c_{19} \eps_j^{\beta_5} .\label{eq:maxM2}
\end{align}
We now proceed as in (\ref{eq:X-M}). For any $\beta_5 < 2$ and
$j \geq j_1$,
\begin{align}
\bE&\left( \max_{0 \leq k \leq m_j}
\sup_{t^j_{k} + \zeta(e^j_{k}) < t < t^j_{k+1}, X_t \in \prt D}
|x^j_{k} - X_t| \right)^2 \nonumber \\
 &\leq \bE\left( \max_{0 \leq k \leq n_j}
\sup_{t^j_{k} + \zeta(e^j_{k}) < t < t^j_{k+1}, X_t \in \prt D}
|x^j_{k} - X_t| \right)^2
+ \rho_0^2 \bP(m_j \geq n_j) \nonumber \\
&\leq \bE\left( \max_{0 \leq k \leq n_j} c_8M^j_k \eps_j \right)^2
+ c_{20} \eps_j^2 \nonumber \\
&\leq  c_{19} \eps_j^{\beta_5} + c_{20} \eps_j^2 \leq c_{21}
\eps_j^{\beta_5}. \label{eq:X-M2}
\end{align}

Recall that $m$ is random and note that $m \leq m_{j+1}$. We
obtain the following from (\ref{eq:LDP1})
and (\ref{eq:X-M2}), for any $\beta_7 < 1$, by choosing
appropriate $\beta_5 < 2$ and $\beta_6 < -1$,
\begin{align}\label{eq:msup}
\bE&\left( (m+1)
 \left(\max_{0 \leq k \leq m_j}
 \sup_{t^j_{k} + \zeta(e^j_{k}) < t < t^j_{k+1}, X_t \in \prt D}
|x^j_{k} - X_t| \right)^2 \right) \\
&\leq
\bE\left( \eps_j^{\beta_6}
 \left(\max_{0 \leq k \leq m_j}
 \sup_{t^j_{k} + \zeta(e^j_{k}) < t < t^j_{k+1}, X_t \in \prt D}
|x^j_{k} - X_t| \right)^2 \right)
+ \rho_0^2 \bP(m+1 \geq \eps_j^{\beta_6}) \nonumber \\
&\leq c_{21} \eps_j^{\beta_6 + \beta_5} + c_{22} \eps_j^2
\leq c_{23} \eps_j^{\beta_7}. \nonumber
 \end{align}

Next we estimate the second term on the right hand side of
(\ref{eq:xs}) as follows. The number of excursions
$e_{t^{j+1}_k}$ with $|e_{t^{j+1}_k}(0)- e_{t^{j+1}_k}(\zeta-)| \in [\eps_{i+1}, \eps_i]$ is bounded by $m_{i+1}$, so
$$
\sum_{0 \leq k \leq m_{j+1}}
|e_{t^{j+1}_k}(0)- e_{t^{j+1}_k}(\zeta-)|
\leq
\sum_{i=j_1}^{j+1} m_i \eps_{i-1}.
$$
Hence, for any $\beta_9 <0$, we can choose
$\beta_8 < 0$, $\beta_1 < -1$ and $c_{23} < \infty$ so that for
all $j\geq j_1$,
\begin{align*}
& \left(\max_{0 \leq k \leq m_j}
 \sup_{t^j_{k} + \zeta(e^j_{k}) < t < t^j_{k+1}, X_t \in \prt D}
|x^j_{k} - X_t| \right) \left(\sum_{0 \leq k \leq m_{j+1}}
|e_{t^{j+1}_k}(0)- e_{t^{j+1}_k}(\zeta-)| \right) \\
& \leq
\left(\max_{0 \leq k \leq m_j}
 \sup_{t^j_{k} + \zeta(e^j_{k}) < t < t^j_{k+1}, X_t \in \prt D}
|x^j_{k} - X_t| \right) \sum_{i=j_1}^{j+1} m_i \eps_{i-1} \nonumber \\
& \leq
\left(\max_{0 \leq k \leq m_j}
 \sup_{t^j_{k} + \zeta(e^j_{k}) < t < t^j_{k+1}, X_t \in \prt D}
|x^j_{k} - X_t| \right) \sum_{i=j_1}^{j+1}  \eps_j^{\beta_8} n_i 2 \eps_i
+  \rho_0 \sum_{i=j_1}^{j+1} m_i \bone_{\{m_i\geq n_i\eps_j^{\beta_8}\}} 2 \eps_i  \nonumber \\
& \leq
\left(\max_{0 \leq k \leq m_j}
 \sup_{t^j_{k} + \zeta(e^j_{k}) < t < t^j_{k+1}, X_t \in \prt D}
|x^j_{k} - X_t| \right) 2 (j-j_1) \eps_j^{\beta_8 + \beta_1+1}
+  2 \rho_0 \sum_{i=j_1}^{j+1} m_i \bone_{\{m_i\geq n_i \eps_j^{\beta_8}\}} \eps_i  \nonumber \\
& \leq
c_{23} \eps_j^{\beta_9}
\left(\max_{0 \leq k \leq m_j}
 \sup_{t^j_{k} + \zeta(e^j_{k}) < t < t^j_{k+1}, X_t \in \prt D}
|x^j_{k} - X_t| \right)
+  2\rho_0 \sum_{i=j_1}^{j+1} m_i \bone_{\{m_i\geq n_i \eps_j^{\beta_8}\}} \eps_i . \nonumber
\end{align*}
This, (\ref{eq:X-M}) and (\ref{eq:LDP2}) imply that for any
$\beta_{10} < 1$, by choosing an appropriate $\beta_2<1$ and
$\beta_8,\beta_9 < 0$, we obtain for some $c_{26} < \infty$ and $j\geq j_1$,
\begin{align}\label{eq:xXd}
\bE& \left(\max_{0 \leq k \leq m_j}
 \sup_{t^j_{k} + \zeta(e^j_{k}) < t < t^j_{k+1}, X_t \in \prt D}
|x^j_{k} - X_t| \right) \left(\sum_{0 \leq k \leq m_{j+1}}
|e_{t^{j+1}_k}(0)- e_{t^{j+1}_k}(\zeta-)| \right) \\
& \leq
c_{23} \eps_j^{\beta_9}
\bE \left(\max_{0 \leq k \leq m_j}
 \sup_{t^j_{k} + \zeta(e^j_{k}) < t < t^j_{k+1}, X_t \in \prt D}
|x^j_{k} - X_t| \right)
+ 2\rho_0 \bE\left(\sum_{i=j_1}^{j+1} m_i \bone_{\{m_i\geq n_i \eps_j^{\beta_8}\}} \eps_i\right) ,
\nonumber \\
&\leq
c_{24} \eps_j^{\beta_9} \eps_j^{\beta_2}
+ c_{25} \sum_{i=j_1}^{j+1} \eps_j^2 \eps_i
\leq c_{26} \eps_j^{\beta_{10}}. \nonumber
\end{align}

We combine (\ref{eq:xs}), (\ref{eq:msup}) and (\ref{eq:xXd}) to
see that for any $\beta_{10} < 1$, some $c_{27}< \infty$ and
all $j\geq j_1$,
\begin{align*}
 \bE \left(\sup_{\eps_{j+1} \leq \eps' < \eps_j}
 \sum_{0 \leq i \leq m} \left(
  \left|x_{i+1} - x_i\right| \left|x_i - \tw
x_{i}\right| + \left|x_{i+1} - \tw x_{i+1}\right| \left|\tw
x_{i+1} - \tw x_{i}\right|\right) \right)
&\leq c_{27} \eps_j^{\beta_{10}}.
\end{align*}
We use this estimate and (\ref{eq:2ndterm}) to see that for any
$\beta_{10} < 1$, some $c_{27}< \infty$ and all $j\geq j_1$,
\begin{align}\label{eq:2ndterm1}
\bE &\left( \sup_{\eps_{j+1} \leq \eps' < \eps_j}
\sum_{i=0}^m
 \left\|
\pi_{i+1} \circ
\left(
\pi_i - \tw\pi_{i+1}
\right)
\circ \tw \pi_i
\circ e^{\Delta_i\tw \sh _i}
\right\| \right) \\
& \leq
 \bE\left( \sup_{\eps_{j+1} \leq \eps' < \eps_j}
 \sum_{i=0}^m c_{4}
\left(
  \left|x_{i+1} - x_i\right| \left|x_i - \tw
x_{i}\right| + \left|x_{i+1} - \tw x_{i+1}\right| \left|\tw
x_{i+1} - \tw x_{i}\right|\right)
  \right) \leq c_{27} \eps_j^{\beta_{10}}. \nonumber
\end{align}

It follows (\ref{eq:vminv}), (\ref{eq:decom}), (\ref{eq:1stterm1}),
and (\ref{eq:2ndterm1}) that for any $\beta_{10} < 1$,
some $c_{28}< \infty$ and all $j\geq j_1$,
\begin{equation*}
\bE \left( \sup_{\eps_{j+1} \leq \eps' < \eps_j}
 |\bv^j - \bv '| \right) \leq c_{28} \eps_j^{\beta_{10}} |\bv_0|
 = c_{28} 2^{-\beta_{10} j} |\bv_0|.
\end{equation*}
This implies that $ \sum _{j\geq j_1} \bE \left(
\sup_{\eps_{j+1} \leq \eps' < \eps_j} |\bv^j - \bv'| \right) <
\infty $, and, therefore, a.s.,
\begin{equation}\label{eq:vcon}
 \sum _{j\geq j_1}  \left( \sup_{\eps_{j+1} \leq \eps'
< \eps_j} |\bv^j - \bv'| \right) < \infty.
\end{equation}
We extend the notation $\bv'$ from $\eps'$ in the range
$[\eps_{j+1}, \eps_j)$ to all $\eps'>0$, in the obvious way. It
is elementary to see that (\ref{eq:vcon}) implies that $\bv_1
:= \lim_{\eps' \downarrow 0} \bv'$ exists. For every $\eps'>0$,
the mapping $\bv_0 \to \bv'$ is linear, so the same can be said
about the mapping $\bv_0 \to \bv_1 := \A_1 \bv_0$.

Note that the right hand side of (\ref{eq:vminv}) corresponding to $r\in[0,1)$ is less than or equal
to the right hand side of (\ref{eq:vminv}) in the case $r=1$.
Hence, we can strengthen (\ref{eq:vcon}) to the claim that a.s.,
\begin{equation*}
 \sum _{j\geq j_1}  \left( \sup_{0\leq r \leq 1} \sup_{\eps_{j+1} \leq \eps'
< \eps_j} |\bv_r^j - \bv_r'| \right) < \infty,
\end{equation*}
where $\bv_r^j$ and $\bv_r'$ are defined in a way analogous to
$\bv^j$ and $\bv'$, relative to $r\in[0,1]$. The analogous argument shows that for any integer $r_0>0$,
a.s.,
\begin{equation*}
 \sum _{j\geq j_1}  \left( \sup_{0\leq r \leq r_0} \sup_{\eps_{j+1} \leq \eps'
< \eps_j} |\bv_r^j - \bv_r'| \right) < \infty.
\end{equation*}
We use the same argument as above to conclude that for any
$\bv_0$, with probability 1, $\A_{r,\eps}\bv_0\to \A_r \bv_0$
uniformly on compact sets.

It remains to show that $\A_r$ has rank $n-1$. Without loss of
generality, we will consider only $r=1$. Recall definition
(\ref{def:vr}) of $\bv_r$ and note that $\pi_{x_0} \bv_0 \in
\tngt_{x_0} \prt D$. It will suffice to show that for any
$\bw\in \tngt_{x_0} D$ such that $\bw \ne 0$, we have $\A_1 \bw
\ne 0$.

Recall the definition of $x^j_k$'s and related notation from
the beginning of the proof. Recall from (\ref{eq:e^S-estlow})
that for some $c_{29}< \infty$ depending only on $D$, all $x\in \prt
D$, $\bz \in \R^n$, and all $t\geq 0$, we have $|e^{t \sh (x)}
\bz| \ge e^{-c_{29} t}|\bz|$. Therefore, for any $\bw\in
\tngt_{x_0} D$,
\begin{align*}
|\bv^j| &=
|
\exp(\Delta\ell^j_{m_j} \sh(x^j_{m_j})) \pi_{x^j_{m_j}}
\cdots
\exp(\Delta\ell^j_1 \sh(x^j_1)) \pi_{x^j_1}
\exp(\Delta \ell^j_0 \sh(x^j_0)) \pi_{x^j_0} \bw| \\
&\geq \exp\left(-c_{29}\sum_{i=0}^{m_j} \Delta \ell_i \right)
 |\pi_{x^j_{m_j}} \pi_{x^j_{m_j-1}} \cdots \pi_{x^j_1} \pi_{x^j_0} \bw|  \\
& = c_{30} |\pi_{x^j_{m_j}} \pi_{x^j_{m_j-1}} \cdots \pi_{x^j_1} \pi_{x^j_0} \bw|.
\end{align*}
It follows that
\begin{align*}
\frac{|\bv^j|}{|\bw|} =
\prod _{k=1}^{m_j}
\frac{|\pi_{x^j_k}  \cdots \pi_{x^j_1} \pi_{x^j_0} \bw|}
{| \pi_{x^j_{k-1}} \cdots \pi_{x^j_1} \pi_{x^j_0} \bw|},
\end{align*}
and, therefore,
\begin{align*}
\log|\bv^j| = \log |\bw| +
\sum _{k=1}^{m_j}
\left( \log |\pi_{x^j_k}  \cdots \pi_{x^j_2} \pi_{x^j_1} \bw|
- \log | \pi_{x^j_{k-1}}  \cdots \pi_{x^j_2} \pi_{x^j_1} \bw| \right).
\end{align*}

By the Pythagorean theorem, $|\bz|^2 = |\pi_x \bz|^2 + \<\bz/|\bz|, \n(x)\>^2 |\bz|^2$.
This implies that for some $c_{31}<\infty$, if $\bz \in \tngt_y
\prt D$ then
 \begin{equation*}
 |\pi_x \bz| \geq \left(1-c_{31} |x-y|^2 \right) |\bz|.
 \end{equation*}
Thus we can find $\rho_1 >0 $ so small that for some $c_{32}$ and all $|x-y| \leq \rho_1$ and
$\bz \in \tngt_y \prt D$,
 \begin{equation*}
 \log |\pi_x \bz| \geq  \log |\bz| - c_{32} |x-y|^2.
 \end{equation*}
Therefore,
 \begin{align}\label{eq:logest}
\log|\bv^j| &\geq \log |\bw| -
c_{32}\sum _{k=1}^{m_j} |x^j_k - x^j_{k+1}|^2
\bone_{\{|x^j_k - x^j_{k+1}| \leq \rho_1\}}\\
&\qquad +
\sum _{k=1}^{m_j}
\left( \bone_{\{|x^j_k - x^j_{k+1}| > \rho_1\}}
\log \frac{|\pi_{x^j_k}  \cdots \pi_{x^j_1} \pi_{x^j_0} \bw|}
{| \pi_{x^j_{k-1}} \cdots \pi_{x^j_1} \pi_{x^j_0} \bw|}
 \right). \nonumber
 \end{align}
We make $\rho_1$ smaller, if necessary, so that $\rho_1/2 =
\eps_{j_2}$ for some integer $j_2$. Note that the set of
excursions $e_{t^{j_2}_k}$ is finite, with cardinality
$m_{j_2}$.

The hitting distribution of $\prt D$ for any excursion law $H^x$ is
absolutely continuous with respect to the surface area measure
on $\prt D$, because the same is true for Brownian motion.
This, (\ref{old4.1}) and Assumption \ref{a:A1} imply that with
probability 1, for all $k=1,2,\dots, m_{j_2}$, we have
$|\<\n(e_{t^{j_2}_k}(0)), \n(e_{t^{j_2}_k}(\zeta-))\>| > \delta $, for some random $\delta >0$. For large $j$,
because of continuity of reflected Brownian motion paths, and
because excursions are dense in the trajectory, the only points
$x^j_{k+1}$ such that $|x^j_k - x^j_{k+1}| > \rho_1$ can be the
endpoints of excursions $e_{t^{j_2}_i}$, $i=1,2,\dots,
m_{j_2}$.

Fix a point $e_{t^{j_2}_i}$ and let $k(j)$ be such that
$x^j_{k(j)} = e_{t^{j_2}_i}$. Then $x^j_{k(j)-1}\to x^j_{k(j)}$
as $j \to \infty$, again by the continuity of reflected
Brownian motion, and because excursions are dense in the
trajectory. It follows that for large $j$, for all pairs
$(x^j_k, x^j_{k+1})$ with $|x^j_k - x^j_{k+1}|
> \rho_1$, we have $|\<\n(x^j_k), \n(x^j_{k+1})\>|
> \delta/2 $. This implies that, a.s., for some random
$U>-\infty$, and all sufficiently large $j$,
\begin{equation}\label{eq:largeexc}
\sum _{k=1}^{m_j}
\left( \bone_{\{|x^j_k - x^j_{k+1}| > \rho_1\}}
\log \frac{|\pi_{x^j_k}  \cdots \pi_{x^j_1} \pi_{x^j_0} \bw|}
{| \pi_{x^j_{k-1}} \cdots \pi_{x^j_1} \pi_{x^j_0} \bw|}
 \right) > U.
\end{equation}

In view of (\ref{eq:diam}) and (\ref{eq:msup}), for any
$\beta_7 <1$,
\begin{align}\label{eq:exc}
\bE &\left( \sum_{i=0}^{m_j} |x^j_i - x^j_{i+1}|^2 \right) \\
&\leq 8 \bE \left( m_j \left( \max_{0 \leq k \leq m_j}
\sup_{t^j_{k} + \zeta(e^j_{k}) < t < t^j_{k+1}, X_t \in \prt D} |x^j_{k} - X_t|
\right)^2 \right)
+ 8\bE \left( \sum_{k=1}^{m_j} |e_{t^j_k}(0)-e_{t^j_k}(\zeta-)|^2
\right) \nonumber \\
&\leq c_{23} \eps_j^{\beta_7}
+ 8\bE \left( \sum_{k=1}^{m_j} |e_{t^j_k}(0)-e_{t^j_k}(\zeta-)|^2
\right). \nonumber
\end{align}
By (\ref{old4.1}) and (\ref{eq:H1}), the expected number of
excursions $e_s$ with $|e_s(\zeta-) - e_s(0)| \in [2^{-i-1},
2^{-i}]$ and $s\in[0,1]$ is bounded by $c_{33} 2^i$. It follows
that for some $c_{34}<\infty$, not depending on $j$,
\begin{equation*}
\bE \left( \sum_{k=1}^{m_j} |e_{t^j_k}(0)-e_{t^j_k}(\zeta-)|^2
\right) \leq \sum_{i=j_1}^j c_{34} 2^{-2i} 2^i < c_{35} < \infty,
\end{equation*}
and this combined with (\ref{eq:exc}) yields
\begin{equation*}
\sup_{j\geq j_1} \bE \left( \sum_{i=0}^{m_j} |x^j_i - x^j_{i+1}|^2 \right)
< \infty.
\end{equation*}
In view of (\ref{eq:logest}) and (\ref{eq:largeexc}),
 \begin{equation*}
\liminf_{j\to \infty} \bE (\log|\bv^j| -U) \geq \log |\bw|
- \limsup_{j\to \infty} \bE \left(
c_{32}\sum _{k=1}^{m_j} |x^j_k - x^j_{k+1}|^2\right) > -\infty,
 \end{equation*}
so, with probability 1, $\liminf_{j\to \infty} |\bv^j|
>0$, and, therefore, $|\bv_1| \ne 0$.
\end{proof}

\end{document}